\newtheorem{theorem}{Theorem}[section]
\newtheorem{lemma}[theorem]{Lemma}
\newtheorem{prop}[theorem]{Proposition}
\newtheorem{corollary}[theorem]{Corollary}
\theoremstyle{definition}
\newtheorem{remark}[theorem]{Remark}
\newtheorem{ex}[theorem]{Example}
\newcommand{\alg}{\mathrm{alg}}
\newcommand{\bboxplus}{\!\boxplus\!\boxplus}
\title{Analytic subordination for bi-free convolution}
\author{S.T. Belinschi}
\address{CNRS - Institut de Math\'ematiques de Toulouse, 118 Route de Narbonne, 31062 Toulouse,
France}
\email{serban.belinschi@math.univ-toulouse.fr}
\author{H. Bercovici}
\address{Department of Mathematics, Indiana University, 831 E. Third St., Bloomington, IN 47405, USA}
\email{bercovic@indiana.edu}
\author{Y. Gu}
\address{Department of Mathematics and Statistics, Queen's University, Jeffery Hall, 48 University Ave.
Kingston, ON K7L 3N6, Canada}
\email{gu.y@queensu.ca}
\thanks{This work was started during Y. Gu's visit to the IMT, partially supported by 
ANR-11-LABX-0040-CIMI within the program ANR-11-IDEX-0002-02. H. Bercovici was partially supported by a grant from the
National Science Foundation. P. Skoufranis was supported in part by 
Discovery Grant RGPIN-2017-05711 from NSERC (Canada).}
\author{P. Skoufranis}
\address{Department of Mathematics and Statistics
York University, N520 Ross
4700 Keele Street, Toronto, ON M3J 1P3, Canada}
\email{pskoufra@yorku.ca}
\date{}
\begin{document}

\begin{abstract}
In this paper we study some analytic properties of bi-free additive convolution,
both scalar- and operator-valued. We show that using properties of
Voiculescu's subordination functions associated to free additive convolution
of operator-valued distributions, simpler formulas for bi-free convolutions can
be derived. We use these formulas in order to prove several results about atoms of 
bi-free additive convolutions.
\end{abstract}

\maketitle

\section{Introduction}

In his second paper on bi-free independence \cite{BiFree2}, 
Voiculescu provided a linearizing transform for the bi-free
additive convolution of compactly supported probability measures
in the plane $\mathbb R^2$. This formula may be re-written
to naturally involve the subordination functions of the 
free additive convolutions of the two faces (marginals)
of the two probability measures on $\mathbb R^2$ (see
\cite[Remark 2.3]{BiFree2} or Equation \eqref{four} below). Motivated by
the recent work of two of us \cite{GS2}  (see Remark \ref{rmk:biBoolean} below), we note that 
this connection can be more directly justified by
appealing to the operator-valued subordination as
introduced in \cite{V2000}, applied to the restriction
to upper triangular $2\times2$ complex matrices. While 
no freeness over any subalgebra of the  $2\times2$ matrices
appears to be involved in bi-freeness, we nevertheless find
a proof (and slight extension) of the subordination formula \eqref{four} involved in
Voiculescu's methods from \cite{V2000}. Thus, the purpose
of this article is to provide a bi-free equivalent of
Voiculescu's subordination result \cite{V3} and present
a few of its most natural consequences (see Theorem \ref{Main}, 
Proposition \ref{Bound}, and its corollaries below). It turns out that in the case of scalar-valued
distributions, the more general conditionally bi-free additive convolution
introduced by two of us in \cite{GS1}
can also be studied through evaluation of the relevant transforms on upper 
triangular $2\times2$ matrices.

%%%%%%%%%%%%%%%%%%%%%%%%%%%%%%%%%
The rest of this paper is organized the following way: in Section \ref{back}
we provide the necessary background in free and bi-free 
probability theories. In Section \ref{sec:bifreesubord} we prove a subordination
relation for bi-free additive convolution of both scalar- and
operator-valued bi-distributions and for bi-free
convolution semigroups. We conclude Section \ref{sec:bifreesubord} with a 
few regularity results that follow from this main result.
In Section \ref{sec:semi} we study analytic properties of the
bi-free convolution semigroups introduced in \cite{GHM}.
Section \ref{sec:c-bi-free} extends some of the results of Section \ref{sec:bifreesubord} to conditionally
bi-free convolution. Finally, the last section, Section \ref{neg}, is dedicated to a discussion of conditional
expectations and traciality in the context of bi-freeness.
%%%%%%%%%%%%%%%%%%%%%%%%%%%%%%%%%

\section{Notations and background}\label{back}

\subsection{Noncommutative random variables and (bi)freeness}
For the purposes of this paper, we only need to consider
the definition of the bi-freeness of two pairs of random 
variables. In that, we follow \cite[Section 1.1]{BiFree2},
and refer to \cite{CNS,BiFree1} for full details of the 
analytic and combinatorial aspects of bi-free probability.
Consider a noncommutative probability space $(\mathcal A,\varphi)$,
where $\mathcal A$ is a unital algebra over the field of complex numbers 
$\mathbb C$ and $\varphi\colon\mathcal A\to\mathbb C$ is a unit-preserving
linear functional. We assume that $\mathcal A$ is endowed with an involution 
$*$ and that $\varphi(x^*)=\overline{\varphi(x)}$ for all $x\in\mathcal A$.
We will always assume that $\varphi$ is positive and faithful, meaning 
that $\varphi(x^*x)\ge0$, with equality if and only if $x=0$.
 In this case, we refer to $(\mathcal A, \varphi)$ as a $^*$-noncommutative 
probability space. If, in addition, $\mathcal A$ is a $C^*$-algebra (respectively, 
a $W^*$-algebra) and $\varphi$ is a (normal) state, then $(\mathcal A, \varphi)$ 
is said to be a $C^*$-noncommutative probability space (respectively, a 
$W^*$-noncommutative probability space). Elements of $\mathcal A$ are
called random variables. The distribution of a $k$-tuple of random variables
$(a_1,\ldots,a_k)\in\mathcal A^k$ is by definition the collection of all mixed 
moments of the $k$-tuple:
$$
\mu_{(a_1,\ldots,a_k)}=\{\varphi(a_{i_1}\cdots a_{i_m})\colon m\in\mathbb N,i_1,\dots,i_m\in\{1,\dots,k\}\}.
$$

A two-faced pair of noncommutative random variables in $(\mathcal A, \varphi)$ 
is a pair $(a, b)\in\mathcal A^2$. We consider $a$ as the left random variable and 
$b$ as the right random variable of the pair. A pair $\{(a_1,b_1),(a_2,b_2)\}$ of two-faced
noncommutative random variables in $(\mathcal A,\varphi)$ is said to be bi-free 
if their distribution with respect to $\varphi$ satisfies the
following property: there are two vector spaces $\mathcal{X}_1,
\mathcal{X}_2$ with distinguished state vectors $\xi_1,\xi_2$
(i.e. $\mathcal X_j=\mathbb C\xi_j\oplus\ker\psi_j,$ with
$\psi_j\colon\mathcal X_j\to\mathbb C$ linear, $\psi_j(\xi_j)=1$),
so that if $(\mathcal X,\ker\psi,\xi)=
(\mathcal X_1,\ker\psi_1,\xi_1)*(\mathcal X_2,\ker\psi_2,\xi_2)$,
and $\lambda_j,\rho_j$ are the left and right 
representations of $\mathcal L(\mathcal X_j)$ on $\mathcal L
(\mathcal X)$, respectively, $j=1,2,$ then the joint distribution of
$a_1,a_2,b_1,b_2$ with respect to $\varphi$ in $\mathcal A$ equals 
the joint distribution of variables $\lambda_1(a_1),\lambda_2(a_2),
\rho_1(b_1),\rho_2(b_2)$ with respect to $\varphi_\xi$ in $\mathcal L
(\mathcal X)$. Here $\varphi_\xi(T)=\psi(T(\xi)),T\in\mathcal L
(\mathcal X)$.
 It follows from the definition of bi-freeness that left and right random 
variables of different pairs are classically independent with respect to
 $\varphi$ (see \cite[Proposition 2.16]{BiFree1}) so that, in
particular, they commute.

In \cite{BiFree2}, Voiculescu shows that if the joint 
distribution of $(a_j,b_j)$ is determined by two-bands 
moments (i.e. moments of the form $\varphi(LR)$, where 
$L$ runs through all monomials in the left face and $R$ runs 
through all monomials in the right face), then the same remains 
true for $(a_1+a_2,b_1+b_2)$. If in addition $(\mathcal A,\varphi)$ 
is a $C{}^*$-noncommutative probability space in which $a_j=a_j^*,
b_j=b_j^*$, $a_jb_j=b_ja_j$, then the joint distribution of 
$(a_j,b_j)$ coincides with the moments of a compactly supported
probability measure $\eta_j$ in the plane. We will follow \cite{BiFree2} and 
refer to such variables as {\em bi-partite}. The correspondence
is given via the relation
$$
\varphi(a_j^mb_j^n)=\int_{\mathbb R^2}t^ms^n\,{\rm d}\eta_j(t,s),
\quad m,n\in\mathbb N,j=1,2.
$$
Obviously, under this hypothesis, the distribution of $(a_1+a_2,
b_1+b_2)$ is itself the joint distribution of two commuting
self-adjoint random variables, so that there exists a compactly 
supported probability measure $\eta$ on $\mathbb R^2$ whose moments
coincide with it. The measure $\eta$ depends only on $\eta_1$
and $\eta_2$ via formulae provided, for example, in \cite{CNS}.
The notation $\eta=\eta_1\bboxplus\eta_2$ was introduced
in \cite{BiFree1} and is called the bi-free additive convolution
of $\eta_1$ and $\eta_2$. The measure $\eta$ has the property
that its marginals are the free additive convolutions of the
marginals of $\eta_1$ and $\eta_2$. More specifically, if $\mu_j$ 
is the distribution of $a_j$ and $\nu_j$ is the distribution
of $b_j$, then the first marginal of $\eta$ is $\mu_1\boxplus
\mu_2$ and the second marginal of $\eta$ is $\nu_1\boxplus\nu_2$.

\subsection{Analytic transforms}\label{at}
In order to linearize the bi-free additive convolution, Voiculescu introduced
the partial bi-free $R$-transform, a function of two complex 
variables defined on a neighbourhood of zero in $\mathbb C^2$.
We introduce this function, together with its single-variable
analogue, and indicate how it allows one to interpret the 
operation $\bboxplus$ in terms of the single-variable 
analytic subordination functions \cite{V3,Biane1,BBSubord}.

First, define
\begin{eqnarray*}
G_{\eta_j}(z,w)=\varphi\left((z-a_j)^{-1}(w-b_j)^{-1}\right)=
\int_{\mathbb R^2}\frac{{\rm d}\eta_j(t,s)}{(z-t)(w-s)},
\quad\quad \\
G_{\mu_j}(z)=\varphi\left((z-a_j)^{-1}\right)=\int_\mathbb R\frac{{\rm 
d}\mu_j(t)}{z-t},\quad
G_{\nu_j}(w)=\varphi\left((w-b_j)^{-1}\right)=\int_\mathbb R\frac{{\rm 
d}\nu_j(t)}{w-s},
\end{eqnarray*}
for $z\in\mathbb C\setminus\sigma(a_j),w\in\mathbb C\setminus
\sigma(b_j)$, $j=1,2$. Here $\sigma(T)$ denotes the spectrum of the
operator $T$. We shall refer to all three of these functions
as the Cauchy transforms of the corresponding measures. 
Observe that they determine uniquely the probability measures in question,
and depend only on the distribution of $(a_j,b_j)$ with respect to $\varphi$. 
Nevertheless, in the following, we will sometimes write $G_{(a_j,b_j)}$ for 
$G_{\eta_j}$, or $G_{a_j}$ for $G_{\mu_j}$ (respectively, $G_{b_j}$ for $G_{\nu_j}$).

We remind the reader of some properties of Cauchy transforms of positive measures. 
Let us start with the simpler one-variable Cauchy transform: it is an analytic function 
$G_\mu$ sending the upper half-plane $\mathbb C^+$ of the complex plane into the 
lower half-plane $\mathbb C^-$, $G_\mu(\overline{z})=\overline{G_\mu(z)}$, 
and we have $\mu(\mathbb R)=\lim_{y\to+\infty}iyG_\mu(iy).$
The topological support of the measure $\mu$, denoted by $\text{supp}(\mu)$, is 
characterized by the fact that $G_\mu$ extends analytically with real values to its 
complement. One easily sees that $G_\mu$ is decreasing on each connected 
component of $\mathbb R\setminus\text{supp}(\mu)$. It is negative on
$(-\infty,\inf\text{supp}(\mu))$ and positive on $(\sup\text{supp}(\mu),+\infty)$.
However, $G_\mu$ may pass through zero on a bounded component of
$\mathbb R\setminus\text{supp}(\mu)$. If $\mu$ is a probability measure
(that is, $\mu(\mathbb R)=1$), then a simple geometric argument 
shows that 
\begin{eqnarray*}
\lefteqn{G_\mu(z)\in\left \{u+iv\in\mathbb C\colon u^2+\left(v+\frac1{2\Im z}\right)^2\le\frac{1}{(2\Im z)^2},\right.}\\
& & \left.
v\leq-\min\left\{\frac{\Im z}{(\Re z-m)^2+(\Im z)^2},\frac{\Im z}{(\Re z-M)^2+(\Im z)^2}\right\}\right\}
\end{eqnarray*} 
whenever
$\text{supp}(\mu)\subseteq[m,M]$. Indeed, more precisely, $G_\mu(z)$ is a limit of convex combinations
of points $(z-t)^{-1}$, $m\le t\le M$, and these are points which lay on the arc of the circle centered at 
$-1/2\Im z$ and of radius $1/2\Im z$ which is bordered by the points $(z-m)^{-1}$ and $(z-M)^{-1}$
and does not contain zero.

Unfortunately, geometric properties of the two-variable Cauchy transform are nowhere near as 
nice as those of the one-variable Cauchy transform. However, given a compactly supported 
Borel probability measure $\eta$ on $\mathbb R^2$, one can still deduce some useful properties 
of $G_\eta(z,w)$. It is quite obvious that $G_\eta(\overline{z},\overline{w})=\overline{G_\eta(z,w)}$,
and that $G_\eta$ is analytic as a function of two complex variables on the set 
$\{(z,w)\in\mathbb C^2\colon(\{z\}\times\mathbb R)\cap\text{supp}(\eta)
=(\mathbb R\times\{w\})\cap\text{supp}(\eta)=\varnothing\}$. That is to say, 
if $\mu,\nu$ are the marginals of $\eta$, then the only part of $\mathbb C^2$
on which $G_\eta$ may not be analytic is the union of two strips, 
namely $\text{supp}(\mu)\times\mathbb C$ and $\mathbb C\times\text{supp}(\nu)$.
In particular, while the domain of analyticity of $G_\eta$ may not be simply connected, 
it is a connected open subset of $\mathbb C^2$ whose complement is a closed set of
(Hausdorff) dimension at most 3.
Regrettably, $G_\eta$ does not preserve half-planes, and may map elements from
$\mathbb C^+\times\mathbb C^+$ in $\mathbb R$, including possibly zero. The 
zero set of a nonconstant two-variable analytic function is an analytic set which
is either empty or of complex dimension one (we refer to \cite{Chirka} for definition
and properties of analytic sets). Specifically, if there exists a point $(z_0,w_0)$
in the domain of analyticity of $G_\eta$ such that $G_\eta(z_0,w_0)=0$, and
the map $z\mapsto G_\eta(z,w_0)$ has a finite number of zeros in a given
bounded neighborhood of $(z_0,w_0),$ then, by Weiestrass' preparation 
theorem, there exist a nonempty open bidisk $U$ centered at $(z_0,w_0)$, 
an integer $k\in\mathbb N$ and one-variable analytic functions $c_1,
\dots,c_k$ defined on the first coordinate of $U$ such that
 $G_\eta(z,w)=((z-z_0)^k+c_1(w)(z-z_0)^{k-1}+\cdots+c_k(w))\phi(z,w)$,
where $\phi(z,w)$ is a zero-free analytic function on $U$.
For $k=1$ we recover the classical analytic implicit function theorem:
$z=z_0-c_1(w)$ is the implicit function. Thus, the zero set of $G_\eta$
cannot be compactly contained in its domain of analyticity. We would like 
to make this statement more precise for the case in which both $z_0$ and
$w_0$ belong to a half-plane (upper or lower - not necessarily the same).
Say $z_0\in H_1,w_0\in H_2$, $H_j\in\{\mathbb C^\pm\}$. In this case it 
is clear that neither $z\mapsto G_\eta(z,w_0)$ nor $w\mapsto 
G_\eta(z_0,w)$ is constantly equal to zero,
so Weierstrass' preparation theorem applies to it for both coordinates. 
Consider the restriction of $G_\eta$ to $H_1\times H_2$. By 
\cite[Definition 2.1.2]{Chirka}, $\{(z,w)\in H_1\times H_2\colon G_\eta(z,w)=0\}$
is an analytic subset of $H_1\times H_2$. Let $Z$ be a connected component of
this set. This is a principal analytic set (the zero set of a two-variable analytic 
function) of dimension and codimension one, so, as \cite[Theorem 2.3 and 
Corollary 2.8.2]{Chirka} inform us, its singular points form a discrete set (a 
point of $Z$ is regular if it has a neighbourhood $U$ such that $U\cap Z$ is 
a manifold, and it is singular if it is not regular). Of course, these singular points 
may very well accumulate near the boundary of the natural domain of $G_\eta$.
Tautologically, $Z$ is also irreducible, so that the intersection of $Z$ with any other
one-dimensional analytic subset of $H_1\times H_2$ is either a discrete (possibly
empty) set, or contains $Z$ (see \cite[Sections 5.3-5.6]{Chirka}).
This observation will be useful later on when we need to compare zero sets
of different two-variable Cauchy transforms.

It is known from \cite{V-JFA} that if one defines
$K_{\mu_j}(z)$ as the inverse of $G_{\mu_j}(z)$ on a 
neighbourhood of infinity (so that $K_{\mu_j}(0)=\infty$), then
the function $R_{\mu_j}(z)=K_{\mu_j}(z)-\frac1z$, called the (free) {\em $R$-transform} of $\mu_j$, is analytic (instead
of just meromorphic) on the same neighbourhood of zero and
satisfies the relation $R_{\mu_1\boxplus\mu_2}(z)=R_{\mu_1}(z)
+R_{\mu_2}(z)$ for $z$ in a small enough neighbourhood of zero.
Observe that if we define $\omega_{a_1}(z)=K_{\mu_1}(G_{\mu_1
\boxplus\mu_2}(z))$ and $\omega_{a_2}(z)=K_{\mu_2}(G_{\mu_1
\boxplus\mu_2}(z))$, then the relation satisfied by the 
$R$-transforms can be re-written as
\begin{equation}\label{subord}
\omega_{a_1}(z)+\omega_{a_2}(z)-z=\frac{1}{G_{\mu_1
\boxplus\mu_2}(z)}=\frac{1}{G_{\mu_1}(\omega_{a_1}(z))}
=\frac{1}{G_{\mu_2}(\omega_{a_2}(z))}.
\end{equation}
It has been shown that the functions $\omega_{a_j}$, $j=1,2$, called
the {\em subordination functions}, extend analytically as self-maps of 
the complex upper half-plane $\mathbb C^+$ and Equation \eqref{subord} 
holds for all $z\in\mathbb C^+$ (see \cite{V3,Biane1,BBSubord}). 
Of course, a similar relation holds for $R_{\nu_j}(w),$
$G_{\nu_j}(w)$ and $\omega_{b_j}(w)$, $j=1,2$.

For the measure $\eta_j$ (which is the distribution of
the pair $(a_j,b_j)$ with respect to $\varphi$), Voiculescu introduces
in \cite[Theorem 2.1]{BiFree2} the function 
\begin{equation}\label{BiR}
R_{(a_j,b_j)}(z,w)=R_{\eta_j}(z,w)=1+zR_{\mu_j}(z)+wR_{\nu_j}(w)-
\frac{zw}{G_{\eta_j}(K_{\mu_j}(z),K_{\nu_j}(w))},
\end{equation}
for $z,w$ in a small enough bi-disk centred at zero (also see \cite[Section 7.2]{S2}). 
This function is called  the {\em partial bi-free $R$-transform} of $(a_j, b_j)$ (or of $\eta_j$).
Observe first that this function is indeed well-defined, 
including at zero, since 
$$
\lim_{w\to0}\lim_{z\to0}
\frac{G_{\eta_j}(K_{\mu_j}(z),K_{\nu_j}(w))}{zw}=
\lim_{w\to0}\frac{G_{\nu_j}(K_{\nu_j}(w))}{w}=1.
$$
The limits can clearly be permuted. In particular, $R_{\eta_j}
(0,0)=0$. Theorem 2.1 combined with Section 1.2 from \cite{BiFree2}
provide the following:
\begin{equation}\label{BiLin}
R_{\eta_1}(z,w)+R_{\eta_2}(z,w)=R_{\eta_1\bboxplus\eta_2}(z,w),
\quad |z|+|w|\text{ sufficiently small}.
\end{equation}
Given the linearizing property of the one-variable
$R$-transform, this is equivalent to
$$
\frac{zw}{G_{\eta_1}(K_{\mu_1}(z),K_{\nu_1}(w))}+
\frac{zw}{G_{\eta_2}(K_{\mu_2}(z),K_{\nu_2}(w))}-1\!=\!
\frac{zw}{G_{\eta_1\bboxplus\eta_2}(K_{\mu_1\boxplus\mu_2}(z),
K_{\nu_1\boxplus\nu_2}(w))}.
$$
This relation and Equation \eqref{subord} allow us to write a formula for 
$G_{\eta_1\bboxplus\eta_2}$ defined on all of 
$(\mathbb C\setminus\sigma(a_1+a_2))\times
(\mathbb C\setminus\sigma(b_1+b_2))$ involving the subordination 
functions of free additive convolution.
If we divide by $zw$ and replace in the above $z$ by 
$G_{\mu_1\boxplus\mu_2}(z)$ and $w$ by $G_{\nu_1\boxplus\mu_2}(w)$, 
then
\begin{eqnarray}\label{four}
\lefteqn{\frac{1}{G_{\eta_1}(\omega_{a_1}(z),\omega_{b_1}(w))}+
\frac{1}{G_{\eta_2}(\omega_{a_2}(z),\omega_{b_2}(w))} }\quad \quad
\quad \quad\quad \quad\quad \quad\\
& = & \frac{1}{G_{\mu_1\boxplus\mu_2}(z)G_{\nu_1\boxplus\nu_2}(w)}
+\frac{1}{G_{\eta_1\bboxplus\eta_2}(z,w)}.
\nonumber
\end{eqnarray}
This relation clearly holds for $|z|$ and $|w|$ sufficiently large as an
equality of analytic functions. If rewritten as 
\begin{eqnarray}
\lefteqn{G_{\eta_1\bboxplus\eta_2}(z,w)\left(G_{\eta_1}(\omega_{a_1}(z),\omega_{b_1}(w))+
G_{\eta_2}(\omega_{a_2}(z),\omega_{b_2}(w))\right) }\label{univ}\\
& = & G_{\eta_1}(\omega_{a_1}(z),\omega_{b_1}(w))\left(\frac{G_{\eta_1\bboxplus\eta_2}(z,w)}{G_{\mu_1\boxplus\mu_2}(z)G_{\nu_1\boxplus\nu_2}(w)}
+1\right)G_{\eta_2}(\omega_{a_2}(z),\omega_{b_2}(w)),\nonumber
\end{eqnarray}
then it holds for any $z\in\mathbb C\setminus\sigma(a_1+a_2),w\in\mathbb C
\setminus\sigma(b_1+b_2)$, as an equality of meromorphic functions 
(see \cite[Remark 2.3 and Lemma 2.6]{BiFree2}
). In fact, the only poles may come from zeros of $G_{\mu_1\boxplus\mu_2}$ and of $G_{\nu_1\boxplus\nu_2}$.
These functions can have zeros only in $\text{co}(\sigma(a_1+a_2))\setminus\sigma(a_1+a_2)$ (respectively
$\text{co}(\sigma(b_1+b_2))\setminus\sigma(b_1+b_2)$ - we have denoted by $\text{co}(A)$
the convex hull of the set $A$). However, Equation \eqref{subord} guarantees that 
exactly one of $\omega_{a_1},\omega_{a_2}$  has a simple pole at the 
zero of $G_{\mu_1\boxplus\mu_2}$ (with a similar statement 
for $b$ and $\nu$). The behavior of the two-variable Cauchy
transform at infinity guarantees that the right-hand side of 
the above equality is actually analytic in such a point, and
thus the equality is an equality of analytic functions on 
$(\mathbb C\setminus\sigma(a_1+a_2))\times(\mathbb C
\setminus\sigma(b_1+b_2))$.

We will see below that in fact Equation \eqref{four} extends to
$(\mathbb C^+\times\mathbb C^+)\cup(\mathbb C^-\times\mathbb C^-)$ as an equality 
of meromorphic functions, and  $\{(z,w)\in\mathbb C^+\times\mathbb C^+
\colon G_{\eta_1\bboxplus\eta_2}(z,w)=0\}\supseteq\{(z,w)\in\mathbb C^+\times\mathbb C^+
\colon G_{\eta_j}(\omega_{a_j}(z),\omega_{b_j}(w))=0\}$, $j=1,2$.

\subsection{Operator-valued random variables, their analytic transforms, 
and (bi)freeness with amalgamation}
Most importantly for us, Voiculescu extended in \cite{V2000} 
the analytic subordination results  from above to self-adjoint 
random variables which are {\em free with amalgamation} 
over some subalgebra. We outline his result below.

%%%%%%%%%%%%%%%%%%%%%%%%%%%%%

%%%%%%%%%%%%%%%%%%%%%%%%%%%%%

Let $(M, E,B)$ be an operator-valued $W^*$-noncommutative 
probability space; that is, $B\subseteq M$ is a unital inclusion
of (unital) $W^*$-algebras, and $E\colon M\to B$ is a 
unit-preserving conditional expectation. Let $X_1=X_1^*,X_2=X_2^*
\in M$ be free over $B$ with respect to $E$. Then there exists a 
countable family $\omega=\{\omega_n\}_{n\in\mathbb N}$, with each
$\omega_n$ defined on a subset of $M_n(B)$, such that
$$
(E\otimes{\rm Id}_{M_n(B)})\left[(v-(X_1+X_2)\otimes I_n)^{-1}\right]=
(E\otimes{\rm Id}_{M_n(B)})\left[(\omega_n(v)-X_1\otimes I_n)^{-1}\right],
$$
for all $v\in M_n(B)$ with strictly positive imaginary part or of
inverse of sufficiently small norm. The functions $\omega_n$ increase
the imaginary part of $v$ if $\Im v>0$, and the dependence on $n$
satisfies certain compatibility conditions (see \cite{V2000,V1}). As it will 
usually be clear from the context, from now on we supress the level
$n$ from our notation. 

%%%%%%%%%%%%%%%%%%%%%%%%%%%%%

%%%%%%%%%%%%%%%%%%%%%%%%%%%%%

The functions of the type 
$$
G_X=\{G_{X,n}\}_{n\in\mathbb N},\quad G_{X,n}(v)=(E\otimes{\rm Id}_{M_n(B)})\left[
(v-X\otimes I_n)^{-1}\right]
$$ 
are natural extensions of the classical Cauchy transforms and share 
many of the properties of their classical, complex-valued counterparts. Voiculescu 
showed in \cite{V*} that they allow the definition of 
$B$-valued $R$-transforms via the exact same procedure 
as for the complex-valued $R$-transforms (for a discussion of the natural domain of $K_X$,
see \cite{BMS}), and that these 
$R$-transforms satisfy $R_{X_1+X_2,n}(v)=R_{X_1,n}(v)+R_{X_2,n}(v)$,
$n\in\mathbb N$, on a small neighbourhood of zero in $M_n(B)$, for $X_1,X_2$ 
free with respect to $E$ (see \cite{V2}). As for the functions $\omega$ above,
in the following we will suppress the index $n$ from the notations of $G$ and $R$
whenever the space on which they are defined is clear from the context. 

An argument
similar to the one used to prove \eqref{subord} shows that
the $B$-valued subordination functions satisfy precisely
the same equation \eqref{subord}, but with variables $v\in
M_n(B),\Im v>0$ instead of variables $z\in\mathbb C^+$
(see \cite{BMS} for details). 

An operator-valued version of the analytic transforms of bi-freeness 
has been elaborated by one of us in \cite{S}. We present a version of
Equation \eqref{four} for operator-valued transforms. We 
consider a $C^*$-$B$-$B$-noncommutative probability space; the case when
$B$ is finite dimensional is of a special interest to us (see 
\cite[Definitions 2.5 and 5.1]{S} for details).  An important 
difference from the case of $\mathbb C$-valued case 
comes from the fact that a noncommutative algebra
may receive a natural ``opposite'' structure. Thus, 
for instance, if analytic transforms of left random variables 
in a $B$-$B$-noncommutative probability space coincide with 
Voiculescu's analytic transforms introduced above, analytic transforms 
of the right random variables, while defined the same way (and thus 
having the same analytic properties), are viewed as being defined on 
(open subsets of) $B^{\rm op}$, the algebra having the same underlying set 
and vector space structure as $B$, but with multiplication defined by
$b\cdot_{\rm op}b'=b'b$. Consequently, a ${C}^*$-, or a ${W}^*$-$B$-$B$
noncommutative probability space requires, beyond the data $(M,E,B)$
described above, a way to view $B$ and $B^{\rm op}$ simultaneously as
subalgebras of $M$, that is, a linear homomorphism $\varepsilon\colon B\otimes B^{\rm op}\to M$.
This homomorphism satisfies certain conditions for which we refer to 
\cite[Definition 2.5]{S} (see also \cite{CNS}). For our study, it is important to note that we must add an 
$\ell$ or an $r$ to each analytic transform defined on $B$, corresponding to whether it is viewed as 
being defined on $B$ or $B^{\rm op}$. In 
addition, the $B$-$B$-valued equivalent of $G_\eta(z,w)$ (or, more 
precisely, of $G_\eta(z^{-1},w^{-1})/zw$), becomes a function of three
variables $M_{(X,Y)}(b,c,d)$, linear in $c\in B$, and for which
$b\in B$ is a ``left'' indeterminate and $d\in B^{\rm op}$ is a 
``right'' indeterminate (whether $c$ is viewed as a left or right 
indeterminate is irrelevant). More specifically, for a 
bi-random variable $(X,Y)$ we define the $B$-valued partial moment generating
function
$$
M_{(X,Y)}(b,c,d):=\sum_{n,m\ge0}E(({\rm L}_bX)^n({\rm R}_dY)^m{\rm R}_c),\quad
b,c,d\in B,\|b\|,\|d\|\text{ small}.
$$
The moment generating functions of the left and right variables are
$M_X^\ell(b)=\sum_{n\ge0}E(({\rm L}_bX)^n)$ and $M_X^r(d)=\sum_{n\ge0}
E(({\rm R}_dX)^n)$, respectively.
For the purposes of this paper, the reader is invited to see ${\rm L}_b$ and 
${\rm R}_d$ just as special ways of viewing the scalar algebras $B$ and 
$B^{\rm op}$ embedded in the noncommutative algebra $M$.
Thus, Voiculescu's subordination relations from above are
re-written in terms of the two moment generating functions as 
$$
G_{X_1+X_2}(b^{-1})=M_{X_1+X_2}^\ell(b)b=
M_{X_1}^\ell(\omega(b^{-1})^{-1})\omega(b^{-1})^{-1},
$$ 
$$
G_{Y_1+Y_2}(d^{-1})=dM_{Y_1+Y_2}^r(d)=\omega(d^{-1})^{-1}
M_{Y_1}^r(\omega(d^{-1})^{-1}),
$$ 
respectively (the convention from \cite{S} is slightly different 
from ours: the $G(b)$ from \cite{S} is $G(b^{-1})$ here).

For our purposes, we prefer to view $M_{(X,Y)}$  as an 
analytic function from $B\times B$ with values in $\mathcal L(B)$,
the space of continuous linear operators from $B$ to itself. Viewed
as such, we have $M_{(X,Y)}(0,c,0)=c$, i.e. $M_{(X,Y)}(0,\cdot,0)=
{\rm Id}_B$. Since the correspondence $B\times B\ni(b,d)\mapsto 
M_{(X,Y)}(b,\cdot,d)\in\mathcal L(B)$ is analytic, we conclude
that on a small enough norm-neighbourhood of $(0,0)$, the element
$M_{(X,Y)}(b,\cdot,d)\in\mathcal L(B)$ is invertible as a linear map
from $B$ to $B$. We define $\Psi_{(X,Y)}(b,\cdot,d)=b^{-1}M_{(X,Y)}
(b,\cdot,d)^{\langle-1\rangle}d^{-1}\in\mathcal L(B)$, where 
$M_{(X,Y)}(b,\cdot,d)^{\langle-1\rangle}$ is the inverse of 
$M_{(X,Y)}(b,\cdot,d)$ in $\mathcal L(B)$.

With these notations, the partial $R$-transform of $(X,Y)$ defined in 
\cite[Section 5]{S} is the analytic map
$B\times B\ni(b,d)\mapsto R_{(X,Y)}(b,\cdot,d)\in\mathcal L(B)$
uniquely determined on a neighbourhood of $(0,0)$ by the initial
condition $R_{(X,Y)}(0,\cdot,0)=0$ (the zero element in 
$\mathcal L(B)$), and the functional equation 
$$
R_{(X,Y)}(M^\ell_X(b)b,c,dM^r_Y(d))=M^\ell_X(b)c+cM^r_Y(d)
-M^\ell_X(b)b\Psi_{(X,Y)}(b,c,d)dM^r_Y(d)-c.
$$
If $(X_1,Y_1)$ and $(X_2,Y_2)$ are bi-free over $B$ with respect to $E$, then the partial $R$-transform satisfies 
$$
R_{(X_1+X_2,Y_1+Y_2)}(b,c,d)=R_{(X_1,Y_1)}(b,c,d)+R_{(X_2,Y_2)}(b,c,d).
$$ 
Using the functional equation defining $R_{(X,Y)}$, we obtain
\begin{eqnarray*}
\lefteqn{M^\ell_{X_1+X_2}(b)c+cM^r_{Y_1+Y_2}(d)-M^\ell_{X_1+X_2}(b)
b\Psi_{(X_1+X_2,Y_1+Y_2)}(b,c,d)dM^r_{Y_1+Y_2}(d)-c}\\
&= & R_{(X_1+X_2,Y_1+Y_2)}(M^\ell_{X_1+X_2}(b)b,c,dM^r_{Y_1+Y_2}(d))\\
& = & R_{(X_1,Y_1)}(M^\ell_{X_1+X_2}(b)b,c,dM^r_{Y_1+Y_2}(d))+
R_{(X_2,Y_2)}(M^\ell_{X_1+X_2}(b)b,c,dM^r_{Y_1+Y_2}(d)).
\end{eqnarray*}
The subordination relation provides
\begin{eqnarray*}
\lefteqn{R_{(X_j,Y_j)}(M^\ell_{X_1+X_2}(b)b,c,dM^r_{Y_1+Y_2}(d))}\\
& = & R_{(X_j,Y_j)}(M^\ell_{X_j}(\omega_{X_j}(b^{-1})^{-1})
\omega_{X_j}(b^{-1})^{-1},c,\omega_{Y_j}(d^{-1})^{-1}
M_{Y_j}^r(\omega_{Y_j}(d^{-1})^{-1}))\\
& = & M^\ell_{X_j}(\omega_{X_j}(b^{-1})^{-1})c+c
M_{Y_j}^r(\omega_{Y_j}(d^{-1})^{-1})-c\\
& & \mbox{}-M^\ell_{X_j}(\omega_{X_j}(b^{-1})^{-1})
\omega_{X_j}(b^{-1})^{-1}\\
& & \mbox{}\times\Psi_{(X_j,Y_j)}
\left(\omega_{X_j}(b^{-1})^{-1},c,\omega_{Y_j}(d^{-1})^{-1}\right)
\omega_{Y_j}(d^{-1})^{-1}M_{Y_j}^r(\omega_{Y_j}(d^{-1})^{-1}).
\end{eqnarray*}
This provides the following operator-valued analogue of relation \eqref{four}:
\begin{eqnarray}
\lefteqn{M^\ell_{X_1+X_2}(b)c+cM^r_{Y_1+Y_2}(d)-M^\ell_{X_1+X_2}(b)b
\Psi_{(X_1+X_2,Y_1+Y_2)}(b,c,d)dM^r_{Y_1+Y_2}(d)-c}\nonumber\\
& = & M^\ell_{X_1}(\omega_{X_1}(b^{-1})^{-1})c+c
M_{Y_1}^r(\omega_{Y_1}(d^{-1})^{-1})-c\nonumber\\
& & \mbox{}-M^\ell_{X_1}(\omega_{X_1}(b^{-1})^{-1})
\omega_{X_1}(b^{-1})^{-1}\nonumber\\
& & \mbox{}\times\Psi_{(X_1,Y_1)}
\left(\omega_{X_1}(b^{-1})^{-1},c,\omega_{Y_1}(d^{-1})^{-1}\right)
\omega_{Y_1}(d^{-1})^{-1}M_{Y_1}^r(\omega_{Y_1}(d^{-1})^{-1})\nonumber\\
& & \mbox{}+ M^\ell_{X_2}(\omega_{X_2}(b^{-1})^{-1})c+c
M_{Y_2}^r(\omega_{Y_2}(d^{-1})^{-1})-c\nonumber\\
& & \mbox{}-M^\ell_{X_2}(\omega_{X_2}(b^{-1})^{-1})
\omega_{X_2}(b^{-1})^{-1}\nonumber\\
& & \mbox{}\times\Psi_{(X_2,Y_2)}
\left(\omega_{X_2}(b^{-1})^{-1},c,\omega_{Y_2}(d^{-1})^{-1}\right)
\omega_{Y_2}(d^{-1})^{-1}M_{Y_2}^r(\omega_{Y_2}(d^{-1})^{-1}).\label{five}
\end{eqnarray}
Furthermore, recalling that $G_X(b)=M_X^\ell(b^{-1})b^{-1}$, the 
$B$-valued version of \eqref{subord} written under the
form 
$$
G_{X_1+X_2}(b^{-1})b^{-1}=G_{X_1}(\omega_{X_1}(b^{-1}))
\omega_{X_1}(b^{-1})+G_{X_2}(\omega_{X_2}(b^{-1}))
\omega_{X_2}(b^{-1})-1
$$ 
allows us to simplify the above to 
\begin{eqnarray}
\lefteqn{M^\ell_{X_1+X_2}(b)b
\Psi_{(X_1+X_2,Y_1+Y_2)}(b,c,d)dM^r_{Y_1+Y_2}(d)+c}\nonumber\\
& = & \mbox{}M^\ell_{X_1}(\omega_{X_1}(b^{-1})^{-1})
\omega_{X_1}(b^{-1})^{-1}\nonumber\\
& & \mbox{}\times\Psi_{(X_1,Y_1)}
\left(\omega_{X_1}(b^{-1})^{-1},c,\omega_{Y_1}(d^{-1})^{-1}\right)
\omega_{Y_1}(d^{-1})^{-1}M_{Y_1}^r(\omega_{Y_1}(d^{-1})^{-1})\nonumber\\
& & \mbox{}+M^\ell_{X_2}(\omega_{X_2}(b^{-1})^{-1})
\omega_{X_2}(b^{-1})^{-1}\nonumber\\
& & \mbox{}\times\Psi_{(X_2,Y_2)}
\left(\omega_{X_2}(b^{-1})^{-1},c,\omega_{Y_2}(d^{-1})^{-1}\right)
\omega_{Y_2}(d^{-1})^{-1}M_{Y_2}^r(\omega_{Y_2}(d^{-1})^{-1}).\label{six}
\end{eqnarray}
Since we usually prefer to deal with $G$ rather than $M$, we perform
in the above the changes of variable $b\mapsto b^{-1}$ and $d\mapsto
d^{-1}$ in order to obtain
\begin{eqnarray}
\lefteqn{G_{X_1+X_2}(b)
\Psi_{(X_1+X_2,Y_1+Y_2)}(b^{-1},c,d^{-1})G_{Y_1+Y_2}(d)+c}\nonumber\\
& = & \mbox{}G_{X_1}(\omega_{X_1}(b))\Psi_{(X_1,Y_1)}
\left(\omega_{X_1}(b)^{-1},c,\omega_{Y_1}(d)^{-1}\right)
G_{Y_1}(\omega_{Y_1}(d))\nonumber\\
& & \mbox{}+G_{X_2}(\omega_{X_2}(b))\Psi_{(X_2,Y_2)}
\left(\omega_{X_2}(b)^{-1},c,\omega_{Y_2}(d)^{-1}\right)
G_{Y_2}(\omega_{Y_2}(d)),\label{seven}
\end{eqnarray}
a precise analogue of Equation \eqref{four} (the parallel is made 
more obvious by the fact that $\Psi(b^{-1},\cdot,d^{-1})=
G(b,\cdot,d)^{\langle-1\rangle}$, as $G(b^{-1},b^{-1}cd^{-1},d^{-1}):=
M(b,c,d)$). This relation holds 
for all $b,d$ for which the functions $\Psi$ are defined
%%%%%%%%%%%%%%%%%%%%%%%%%%
as analytic extensions from the set of elements 
$b,d$ satisfying the requirement that $\|b^{-1}\|$
and $\|d^{-1}\|$ are sufficiently small. This is an open set, so
that the above relation does determine $\Psi_{(X_1+X_2,Y_1+Y_2)}$
uniquely from the knowledge of $\Psi_{(X_j,Y_j)},j=1,2,$ and of the
free additive convolution of operator-valued distributions. 
We record next a slightly different version of \eqref{seven}, 
which resembles the scalar reduced partial
$R$-transform. Its validity follows trivially from \eqref{seven}:
\begin{eqnarray}
\lefteqn{\Psi_{(X_1+X_2,Y_1+Y_2)}(K_{X_1+X_2}(b)^{-1},c,
K_{Y_1+Y_2}(d)^{-1})-b^{-1}cd^{-1}=}\nonumber\\
& & \mbox{}\Psi_{(X_1,Y_1)}
\left(K_{X_1}(b)^{-1},c,K_{Y_1}(d)^{-1}\right)-b^{-1}cd^{-1}\nonumber\\
& & \mbox{}+\Psi_{(X_2,Y_2)}
\left(K_{X_2}(b)^{-1},c,K_{Y_2}(d)^{-1}\right)-b^{-1}cd^{-1},
\label{eight}
\end{eqnarray}

The transforms $G,M,\Psi$ and $R$ do not fully characterize the
joint distribution of $(X,Y)$, but only the ``band moments.'' However, 
this will suffice for our main purpose of studying general distributions 
of bi-partite bi-free random variables (we remind the reader that by ``bi-partite'' we mean 
that all left random variables commute with all right random variables). In this context, our interest is
mainly in bi-freeness with amalgamation over 
finite dimensional algebras. The construction providing an 
$M_n(\mathbb C)$-$M_n(\mathbb C)$-noncommutative probability space from
a classical noncommutative probability space $(\mathcal A,\varphi)$
is the following (see \cite[Section 6]{S2} or \cite[Section 4]{S}).
First, one defines the left and right actions
$$
{\rm L}_b(T)=\left[\sum_{k=1}^nb_{ik}T_{kj}\right]_{i,j=1}^n,\quad
{\rm R}_d(T)=\left[\sum_{k=1}^nd_{kj}T_{ik}\right]_{i,j=1}^n,
$$
for all $b,d\in M_n(\mathbb C),T\in
M_n(\mathcal A).$ Both 
${\rm L}_b$ and ${\rm R}_d$ are indeed bounded linear maps on $M_n(\mathcal A)$.
The correspondences $b\mapsto {\rm L}_b$ and $d\mapsto{\rm R}_d$ are 
algebra homomorphisms from $M_n(\mathbb C)$ and
$M_n(\mathbb C)^{\rm op}$, respectively, into 
$\mathcal L(M_n(\mathcal A)).$
We define the right algebra 
$\mathcal L(M_n(\mathcal A))_r$ as the set 
$$
\{Z\in\mathcal L(M_n(\mathcal A))\colon
Z{\rm L}_b={\rm L}_bZ\text{ for all }b\in M_n(\mathbb C)\},
$$
and $\mathcal L(M_n(\mathcal A))_\ell$ the same way,
but with ${\rm L}$ replaced by ${\rm R}$. One embeds $M_n(\mathcal A)$
in $\mathcal L(M_n(\mathcal A))_r$ via
$$
Z\mapsto{\rm R}(Z), \quad {\rm R}(Z)(T)=\left[\sum_{k=1}^nZ_{kj}T_{ik}
\right]_{i,j=1}^n\text{ for all }T\in M_n(\mathcal A),
$$
and in $\mathcal L(M_n(\mathcal A))_\ell$ via
$$
Z\mapsto{\rm L}(Z),\quad{\rm L}(Z)(T)=\left[\sum_{k=1}^nZ_{ik}T_{kj}
\right]_{i,j=1}^n\text{ for all }T\in M_n(\mathcal A).
$$
The map $Z\mapsto{\rm L}(Z)$ is an injective algebra $*$-homomorphism from
$M_n(\mathcal A)$ into $\mathcal L(M_n(\mathcal A))$,
and $Z\mapsto{\rm R}(Z)$ from $M_n(\mathcal A^{\rm op})^{\rm op}$ into
$\mathcal L(M_n(\mathcal A))$.
The conditional expectation $E_n\colon
\mathcal L(M_n(\mathcal A))\to M_n(\mathbb C)$
defined by $E_n[W]=\left[\varphi(W(I_n)_{ij})\right]_{i,j=1}^n$
satisfies $E_n[{\rm L}(Z)]=\left[\varphi(Z_{ij})\right]_{i,j=1}^n$ and
$E_n[{\rm R}(Z)]=\left[\varphi(Z_{ij})\right]_{i,j=1}^n$. That is, the 
distribution of $Z\in M_n(\mathcal A)$ with respect to 
$\varphi\otimes{\rm Id}_{M_n(\mathbb C)}$ is the same as 
the distribution of ${\rm L}(Z)$ (respectively, ${\rm R}(Z)$) with respect to $E_n$.
We note that ${\rm R}_d{\rm R}(Y)={\rm R}(Yd)$, ${\rm L}_b{\rm L}(X)={\rm L}(bX)$, 
and ${\rm R}_{I_n}={\rm L}_{I_n}={\rm Id}_{M_n(\mathcal A)}$.
Then the map $M_{(X,Y)}$ defined above is written as 
\begin{eqnarray*}
M_{(X,Y)}(b,c,d) & = & E_n\left[(1-{\rm L}_b{\rm L}(X))^{-1}(1-{\rm R}_d{\rm R}(Y))^{-1}{\rm R}_c
\right]\\
& = & (\varphi\otimes{\rm Id}_{M_n(\mathbb C)})
\left[(1-{\rm L}(bX))^{-1}(1-{\rm R}(Yd))^{-1}{\rm R}(c) (I_n)\right]\\
& = & (\varphi\otimes{\rm Id}_{M_n(\mathbb C)})
\left[{\rm L}((1-bX)^{-1}){\rm R}((1-Yd)^{-1}){\rm R}(c) (I_n)\right]\\
& = & (\varphi\otimes{\rm Id}_{M_n(\mathbb C)})
\left[{\rm L}((1-bX)^{-1}){\rm R}(c(1-Yd)^{-1}) (I_n)\right]\\
&  = & (\varphi\otimes{\rm Id}_{M_n(\mathbb C)})
\left[{\rm L}((1-bX)^{-1})(c(1-Yd)^{-1})\right]\\
& = &  (\varphi\otimes{\rm Id}_{M_n(\mathbb C)})
\left[(1-bX)^{-1}c(1-Yd)^{-1}\right].
\end{eqnarray*}
Thus, vitally for us, the band-moment generating function
for a pair of faces $({\rm L}(X),{\rm R}(Y))$ coincides with the band moment 
generating function of $(X,Y)$. This allows us to extend 
the map $M_{(X,Y)}(b,\cdot,d)$, as its scalar-valued analogue, 
to $\{b\in M_n(\mathbb C)\colon\pm\Im b>0\}\times
\{d\in M_n(\mathbb C)^{\rm op}\colon\pm\Im d>0\}$.

To conclude this section, we state a particular case of \cite[Theorem 6.3.1]{S2} (alternatively see \cite[Theorem 4.1]{S}):
\begin{lemma}\label{Matr-Bi-Free}
Let $(\mathcal A,\varphi)$ be a noncommutative probability space 
and $n\in\mathbb N$. Assume that $(a_1,b_1)$ and $(a_2,b_2)$ 
are bi-free with respect to $\varphi$. Then $(L(X_1),R(Y_1))$ and 
$(L(X_2),R(Y_2))$ are bi-free with amalgamation over $M_n(\mathbb C)$ 
whenever $X_j\in M_n(\mathbb C[a_j]),Y_j\in M_n(\mathbb C[b_j])$,
$j=1,2$. 
\end{lemma}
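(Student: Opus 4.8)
The plan is to reduce the statement to the combinatorial characterization of bi-freeness with amalgamation and then exploit the structure of the embeddings $\mathrm{L}$ and $\mathrm{R}$. First I would recall that $(L(X_1),R(Y_1))$ and $(L(X_2),R(Y_2))$ are bi-free over $M_n(\mathbb C)$ with respect to $E_n$ if and only if all mixed $(\ell,r)$-cumulants of $M_n(\mathbb C)$-valued type that involve operators from both pairs vanish; equivalently, after the universal construction, the left and right representations of the two pairs act on a reduced free product of $M_n(\mathbb C)$-bimodules in the canonical alternating fashion. Since $X_j \in M_n(\mathbb C[a_j])$ and $Y_j \in M_n(\mathbb C[b_j])$, every $L(X_j)$ is a finite $M_n(\mathbb C)$-linear combination of products $L(\text{elementary matrix})\,L(a_j^{k_1})\cdots$ — more simply, using $\mathrm{L}_b\mathrm{L}(X)=\mathrm{L}(bX)$ and $\mathrm{R}_d\mathrm{R}(Y)=\mathrm{R}(Yd)$ together with $\mathrm{R}_{I_n}=\mathrm{L}_{I_n}=\mathrm{Id}$, the unital algebra generated by $M_n(\mathbb C)$ (acting by $\mathrm{L}_b$ and $\mathrm{R}_d$) and by $L(X_j)$, $R(Y_j)$ inside $\mathcal L(M_n(\mathcal A))$ is contained in the algebra generated by $\{\mathrm{L}(X) : X \in M_n(\mathbb C[a_j])\}$ on the left and $\{\mathrm{R}(Y) : Y \in M_n(\mathbb C[b_j])\}$ on the right.

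The core step is to invoke \cite[Theorem 6.3.1]{S2} (equivalently \cite[Theorem 4.1]{S}) in the form already alluded to in the excerpt: when $(a_1,b_1)$ and $(a_2,b_2)$ are bi-free with respect to $\varphi$, the faces generated on the matrix amplification $M_n(\mathcal A)$ by $(\mathrm{L}(M_n(\mathbb C[a_1])),\mathrm{R}(M_n(\mathbb C[b_1])))$ and by $(\mathrm{L}(M_n(\mathbb C[a_2])),\mathrm{R}(M_n(\mathbb C[b_2])))$ are bi-free over $M_n(\mathbb C)$ with respect to $E_n$. I would then verify that $L(X_j)$ lies in the left algebra of the $j$-th face and $R(Y_j)$ in its right algebra: this is immediate since $X_j \in M_n(\mathbb C[a_j])$ forces $\mathrm{L}(X_j)$ into $\mathrm{L}(M_n(\mathbb C[a_j]))$, and likewise for $Y_j$. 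Bi-freeness with amalgamation passes to subpairs (restricting the generating sets of left and right variables cannot create new nonvanishing mixed cumulants), so the desired conclusion follows.

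The main obstacle, and the only point requiring genuine care, is matching conventions: one must check that the conditional expectation $E_n$, the embedding $\mathrm{L}$ of $M_n(\mathcal A)$ into $\mathcal L(M_n(\mathcal A))_\ell$, and the embedding $\mathrm{R}$ into $\mathcal L(M_n(\mathcal A))_r$ — together with the two commuting copies of $M_n(\mathbb C)$ given by $\mathrm{L}_b$ and $\mathrm{R}_d$ — are exactly the data with respect to which \cite[Theorem 6.3.1]{S2} is phrased, including the homomorphism $\varepsilon$ encoding $B$ and $B^{\mathrm{op}}$ inside $M = \mathcal L(M_n(\mathcal A))$. Once the identifications $E_n[\mathrm{L}(Z)] = E_n[\mathrm{R}(Z)] = [\varphi(Z_{ij})]_{i,j=1}^n$ are in place (these are recorded in the excerpt), the cited theorem applies verbatim and the proof is complete. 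I expect the formal write-up to consist mostly of citing \cite[Theorem 6.3.1]{S2} after observing the inclusions $\mathrm{L}(X_j)$, $\mathrm{R}(Y_j)$ into the respective faces, with a remark that bi-freeness is inherited by subfamilies.
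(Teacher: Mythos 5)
Your proposal matches the paper's treatment: the lemma is stated there without a separate proof, being presented verbatim as a particular case of \cite[Theorem 6.3.1]{S2} (equivalently \cite[Theorem 4.1]{S}), which is exactly the reduction you carry out. The additional checks you describe (that $\mathrm{L}(X_j)$, $\mathrm{R}(Y_j)$ land in the appropriate faces, that the data $E_n$, $\mathrm{L}$, $\mathrm{R}$, $\mathrm{L}_b$, $\mathrm{R}_d$ match the conventions of the cited theorem, and that bi-freeness over $M_n(\mathbb C)$ is inherited by subfamilies) are correct and are implicitly what makes the specialization legitimate.
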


\section{Bi-free analytic subordination}\label{sec:bifreesubord}

In this section we establish our main subordination result for 
scalar-valued bi-free random variables. 
Fix a $C^*$-noncommutative probability space $(\mathcal A,\varphi)$ and two pairs
$(a_1,b_1),(a_2,b_2)\in\mathcal A^2$ that are bi-free with
respect to $\varphi$ and bi-partite. Thus, $a_j$ (respectively, $b_j$)
is the left (respectively, right) variable in the pair $(a_j,b_j)$,
and $a_jb_j=b_ja_j$, $j=1,2$. (Many of the computations below 
are valid under weaker hypotheses. In many circumstances, 
our computations with analytic transforms also
hold for operator-valued bi-free pairs of random variables.
We indicate below those cases in which this extension is valid.)

Define $X_j\in M_2(\mathcal A)$ by $X_j=
\begin{bmatrix}
a_j & 0\\
0 & b_j\end{bmatrix}$, $j=1,2$. In the context we will consider below, it is 
important that the left face is in the upper left, and the right face 
in the lower right, corner. We define the conditional 
expectation $M_2(\varphi)=\varphi\otimes{\rm Id}_{M_2(\mathbb C)}$
from $M_2(\mathcal A)$ onto $M_2(\mathbb C)$. With respect to
this expectation, we consider $G_{X_j}(v)=
E\left[(v-X_j)^{-1}\right]$ for $v\in M_2(\mathbb C)$ such that 
$\Im v>0$ or $\|v^{-1}\|<\|X_j\|^{-1}$. As usual, 
the $R$-transform is defined via the functional
equation $G_{X_j}(v^{-1}+R_{X_j}(v))=v$. As before, denote $K_{X_j}
(v)=v^{-1}+R_{X_j}(v)$. We restrict all of these 
functions to upper triangular matrices in $M_2(\mathbb C)$. 
Direct 
computation using the Schur complement yields
\begin{eqnarray*}
G_{X_j}\left(\begin{bmatrix}
z & \zeta\\
0 & w
\end{bmatrix}\right) & = &
M_2(\varphi)\begin{bmatrix}
(z-a_j)^{-1} & -(z-a_j)^{-1}\zeta(w-b_j)^{-1}\\
0 & (w-b_j)^{-1}
\end{bmatrix}\\
& = & \begin{bmatrix}
\varphi\left((z-a_j)^{-1}\right) & -\varphi
\left((z-a_j)^{-1}\zeta(w-b_j)^{-1}\right)\\
0 & \varphi\left((w-b_j)^{-1}\right)
\end{bmatrix}\\
& = & \begin{bmatrix}
G_{\mu_j}(z) & -\zeta G_{\eta_j}(z,w)\\
0 & G_{\nu_j}(w)
\end{bmatrix}.
\end{eqnarray*}
(First two equalities hold for operator-valued random variables
$(a_j,b_j)$, with the map $\varphi
\left((z-a_j)^{-1}\zeta(w-b_j)^{-1}\right)$ replaced by
$\zeta\mapsto M_{(a_j,b_j)}(z^{-1},z^{-1}\zeta w^{-1},w^{-1})
=G_{(a_j,b_j)}(z,\zeta,w)$ - see Section \ref{back}.) The 
compositional inverse of an analytic map that maps  
upper triangular matrices onto upper triangular matrices
preserves upper triangular matrices. That  is,
$K_{X_j}\left(\begin{bmatrix}
z & \zeta\\
0 & w
\end{bmatrix}\right)=
\begin{bmatrix}
K_{a_j}(z) & -k(z,\zeta,w)\\
0 & K_{b_j}(w)
\end{bmatrix}=\begin{bmatrix}
K_{\mu_j}(z) & -k(z,\zeta,w)\\
0 & K_{\nu_j}(w)
\end{bmatrix}$ for some function $k(z,\zeta,w)$.
Then, by the above formula, 
\begin{eqnarray*}
\lefteqn{\begin{bmatrix}
z & \zeta\\
0 & w
\end{bmatrix}= G_{X_j}\left(
\begin{bmatrix}
K_{a_j}(z) & -k(z,\zeta,w)\\
0 & K_{b_j}(w)
\end{bmatrix}\right)}\\
& = & \begin{bmatrix}
G_{a_j}(K_{a_j}(z)) &\varphi((K_{a_j}(z)-a_j)^{-1}k(z,\zeta,w)
(K_{b_j}(w)-b_j)^{-1}))\\
0 & G_{b_j}(K_{b_j}(w))
\end{bmatrix}\\
& = & \begin{bmatrix}
G_{a_j}(K_{a_j}(z)) &G_{(a_j,b_j)}(K_{a_j}(z),
k(z,\zeta,w),K_{b_j}(w)))\\
0 & G_{b_j}(K_{b_j}(w))
\end{bmatrix}\\
& = & \begin{bmatrix}
G_{\mu_j}(K_{\mu_j}(z)) & k(z,\zeta,w)G_{\eta_j}(K_{\mu_j}(z),
K_{\nu_j}(w))\\
0 & G_{\nu_j}(K_{\nu_j}(w))
\end{bmatrix}.
\end{eqnarray*}
(Again, the first three equalities hold for operator-valued random variables.)
The $(1,2)$ entry shows that
$k(z,\zeta,w)$ is linear in $\zeta$, and has as inverse
the linear map $\xi\mapsto G_{(a_j,b_j)}(K_{a_j}(z),
\xi,K_{b_j}(w))$.
Thus, 
\begin{eqnarray*}
k(z,\cdot,w) & = & G_{(a_j,b_j)}(K_{a_j}(z),
\cdot,K_{b_j}(w))^{\langle-1\rangle}\\
& = & K_{\mu_j}(z)
M_{(a_j,b_j)}(K_{\mu_j}(z)^{-1},\cdot,K_{\nu_j}(w)^{-1})^{\langle-1
\rangle}K_{\nu_j}(w)\\
& = & \Psi_{(a_j,b_j)}(K_{\mu_j}(z)^{-1},\cdot,K_{\nu_j}(w)^{-1}),
\end{eqnarray*}
and hence
$$
k(z,\zeta,w)=
\Psi_{(a_j,b_j)}(K_{\mu_j}(z)^{-1},\zeta,K_{\nu_j}(w)^{-1})
=\frac{\zeta}{G_{\eta_j}(K_{\mu_j}(z),
K_{\nu_j}(w))}.
$$ 
(The last quantity above only makes sense for scalar-valued variables.)
In particular,
\begin{eqnarray}
R_{X_j}\left(\begin{bmatrix}
z & \zeta\\
0 & w
\end{bmatrix}\right) & = &
\begin{bmatrix}
K_{\mu_j}(z)-z^{-1} & -k(z,\zeta,w)+z^{-1}\zeta w^{-1}\\
0 & K_{\nu_j}(w)-w^{-1}
\end{bmatrix}\nonumber\\
& = & \begin{bmatrix}
R_{\mu_j}(z) &z^{-1}\zeta w^{-1}-\Psi_{(a_j,b_j)}(K_{\mu_j}(z)^{-1},\zeta,K_{\nu_j}(w)^{-1})\\
0 & R_{\nu_j}(w)
\end{bmatrix}\nonumber\\
& = & \begin{bmatrix}
R_{\mu_j}(z) &\zeta\left(\frac{1}{zw}-\frac{1}{G_{\eta_j}(K_{\mu_j}(z),
K_{\nu_j}(w))}\right)\\
0 & R_{\nu_j}(w)
\end{bmatrix}\label{RR}
\end{eqnarray}
(with the first two equalities making sense for operator-valued variables).
Observe that in all the computations this far we have not
used the fact that $a_jb_j=b_ja_j$. If we
agree to consider only the band moments of type $\varphi(LR)$
($L$ and $R$ being monomials in the left, respectively right,
variable), all the above computations remain valid, with the 
possible difference that $\varphi\left((z-a_j)^{-1}(w-b_j)^{-1}\right)$
might not be the Cauchy transform of a probability measure $\eta_j$
on $\mathbb R^2$. We record our conclusion in the following:

\begin{lemma}\label{lem1}
Let $(\mathcal A,\varphi)$ be a $C^*$-noncommutative probability space and 
$(a,b)\in\mathcal A^2$ be a two-faced pair of noncommutative random variables. Define
the $M_2(\mathbb C)$-valued random variable $X=\begin{bmatrix}
a & 0 \\
0 & b
\end{bmatrix}\in M_2(\mathcal A)$. Then 
$$
R_{X}\left(\begin{bmatrix}
z & \zeta\\
0 & w
\end{bmatrix}\right)=\begin{bmatrix}
R_a(z) & \frac{\zeta}{zw}(R_{(a,b)}(z,w)-zR_a(z)-wR_b(w))\\
0 & R_b(w)
\end{bmatrix}.
$$
In particular, if $(a_1,b_1),(a_2,b_2)\in\mathcal A^2$ are bi-free
with respect to $\varphi$, then, with the notations from the
beginning of this section,
\begin{equation}\label{upper}
R_{X_1+X_2}\left(\begin{bmatrix}
z & \zeta\\
0 & w
\end{bmatrix}\right)=
R_{X_1}\left(\begin{bmatrix}
z & \zeta\\
0 & w
\end{bmatrix}\right)+R_{X_2}\left(\begin{bmatrix}
z & \zeta\\
0 & w
\end{bmatrix}\right)
\end{equation}
for all $z,w\in\mathbb C$ of sufficiently small absolute value and
all $\zeta\in\mathbb C$.

\end{lemma}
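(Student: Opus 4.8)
The plan is to extract Lemma~\ref{lem1} directly from the block of computations that precedes it, since essentially all the work has already been done. First I would observe that the formula for $R_X$ on upper triangular matrices is exactly Equation~\eqref{RR} specialized to a single pair $(a,b)$ (dropping the subscript $j$): the displayed identity
$$
R_X\left(\begin{bmatrix} z & \zeta\\ 0 & w\end{bmatrix}\right)
=\begin{bmatrix} R_a(z) & \zeta\left(\frac{1}{zw}-\frac{1}{G_{(a,b)}(K_a(z),K_b(w))}\right)\\ 0 & R_b(w)\end{bmatrix}
$$
combined with the defining relation \eqref{BiR} of the partial bi-free $R$-transform, namely $R_{(a,b)}(z,w)=1+zR_a(z)+wR_b(w)-\frac{zw}{G_{(a,b)}(K_a(z),K_b(w))}$, which rearranges to $\frac{1}{zw}-\frac{1}{G_{(a,b)}(K_a(z),K_b(w))}=\frac{1}{zw}\bigl(R_{(a,b)}(z,w)-zR_a(z)-wR_b(w)-1\bigr)$. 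Wait --- one must be slightly careful: comparing with the statement, the stated $(1,2)$ entry is $\frac{\zeta}{zw}(R_{(a,b)}(z,w)-zR_a(z)-wR_b(w))$, so I would double-check the normalization of $R_{(a,b)}$ versus the constant $1$ in \eqref{BiR}; the cleanest route is to carry the Schur-complement computation of $G_X$ and $K_X$ on upper triangular matrices as in the excerpt, identify $k(z,\zeta,w)=\Psi_{(a,b)}(K_a(z)^{-1},\zeta,K_b(w)^{-1})$, and then read off $R_X=K_X-v^{-1}$ entrywise, so that the $(1,2)$ entry is $z^{-1}\zeta w^{-1}-\Psi_{(a,b)}(K_a(z)^{-1},\zeta,K_b(w)^{-1})$, and finally substitute the relation between $\Psi_{(a,b)}$ and the partial bi-free $R$-transform to land on the stated form.

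Second, for the ``in particular'' part, I would invoke Lemma~\ref{Matr-Bi-Free}: since $(a_1,b_1)$ and $(a_2,b_2)$ are bi-free with respect to $\varphi$, the pairs $(L(X_1),R(Y_1))$ and $(L(X_2),R(Y_2))$ are bi-free with amalgamation over $M_2(\mathbb C)$ --- but here $X_j=\mathrm{diag}(a_j,b_j)$ is a genuine matrix over $\mathcal A$, and the point is that as an $M_2(\mathbb C)$-valued random variable with respect to $M_2(\varphi)$, its distribution coincides with that of $L(X_j)$ with respect to $E_2$; moreover the diagonal form of $X_j$ makes $L(X_j)$ and $R(X_j)$ agree on the relevant band moments. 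Actually the cleanest statement to use is that the ordinary (one-faced) $M_2(\mathbb C)$-valued random variables $X_1$ and $X_2$ are \emph{free with amalgamation} over $M_2(\mathbb C)$: this follows because $L(X_1)$ and $L(X_2)$ are left variables of bi-free pairs, hence in particular free over $M_2(\mathbb C)$ by the definition of bi-freeness with amalgamation. Granting that, Voiculescu's operator-valued $R$-transform additivity (quoted in Section~\ref{back}, \cite{V2}) gives $R_{X_1+X_2}(v)=R_{X_1}(v)+R_{X_2}(v)$ for $v\in M_2(\mathbb C)$ of sufficiently small inverse norm; restricting to upper triangular $v=\begin{bmatrix} z & \zeta\\ 0 & w\end{bmatrix}$ and noting (as remarked in the excerpt) that $R_{X_j}$ maps upper triangular matrices to upper triangular matrices, we get \eqref{upper} for all small $z,w$ and all $\zeta$.

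The main obstacle, and the only place demanding genuine care, is justifying that $X_1$ and $X_2$ are free over $M_2(\mathbb C)$ --- i.e. making precise how the scalar bi-freeness of $(a_1,b_1)$ and $(a_2,b_2)$ passes, via the embeddings $L,R$, to operator-valued freeness of the diagonal matrices. One has to match the two incarnations of $X_j$: the tautological $M_2(\mathbb C)$-valued variable $(X_j,M_2(\varphi))$ and the image $L(X_j)$ inside $\mathcal L(M_2(\mathcal A))$ with conditional expectation $E_2$. The computation at the end of Section~\ref{back} shows the band-moment generating functions agree; for a \emph{one-faced} variable like $X_j$ the full $M_2(\mathbb C)$-valued distribution is already determined by these, so $R_{X_j}$ computed in either picture coincides, and Lemma~\ref{Matr-Bi-Free} (with $Y_j:=X_j$, say, or more directly the left-variable part of it) supplies the freeness of $L(X_1),L(X_2)$ over $M_2(\mathbb C)$. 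I would also note explicitly, as the excerpt does, that none of the scalar computations used the commutation $a_jb_j=b_jb_a$, so the first displayed formula of the lemma holds for an arbitrary two-faced pair; the hypothesis that $(\mathcal A,\varphi)$ is a $C^*$-space is used only to guarantee the analytic transforms are defined on the stated domains near $0$.
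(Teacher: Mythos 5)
Your derivation of the first displayed formula is fine and follows the paper's own route (Schur complement for $G_X$ on upper triangular matrices, inversion to get $K_X$, then $R_X=K_X-v^{-1}$). The normalization worry you raise resolves itself: from \eqref{BiR} one gets $R_{(a,b)}(z,w)-zR_a(z)-wR_b(w)=1-\tfrac{zw}{G_{(a,b)}(K_a(z),K_b(w))}$, so dividing by $zw$ gives exactly $\tfrac1{zw}-\tfrac1{G_{(a,b)}(K_a(z),K_b(w))}$, matching the $(1,2)$ entry of \eqref{RR}; your intermediate rearrangement with the extra $-1$ is an algebra slip (the two constants cancel), but your fallback to the direct computation lands in the right place.

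The second part, however, contains a genuine error. Your central claim --- that $X_1=\mathrm{diag}(a_1,b_1)$ and $X_2=\mathrm{diag}(a_2,b_2)$ are free with amalgamation over $M_2(\mathbb C)$ --- is false, and the paper exhibits a counterexample immediately after Lemma \ref{lem1}: for bi-free pairs, $\varphi(a_1a_2b_2b_1)=\varphi(a_1b_1)\varphi(a_2)\varphi(b_2)+\varphi(a_2b_2)\varphi(a_1)\varphi(b_1)-\varphi(a_1)\varphi(a_2)\varphi(b_1)\varphi(b_2)$, whereas freeness over $M_2(\mathbb C)$ would force this to equal $\varphi(a_1b_1)\varphi(a_2b_2)$. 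The appeal to Lemma \ref{Matr-Bi-Free} is a misapplication: that lemma requires $X_j\in M_n(\mathbb C[a_j])$ and $Y_j\in M_n(\mathbb C[b_j])$, i.e.\ the matrix playing the role of a left variable may only involve the left face, whereas $\mathrm{diag}(a_j,b_j)$ mixes the two faces and so cannot be treated as a single left variable of a bi-free pair. Consequently you cannot invoke operator-valued $R$-transform additivity on all of $M_2(\mathbb C)$; the additivity in \eqref{upper} holds \emph{only} on upper triangular matrices, which is precisely why the lemma is stated that way. The correct argument is entrywise: the left faces $a_1,a_2$ are free, so $R_{a_1+a_2}=R_{a_1}+R_{a_2}$; likewise $R_{b_1+b_2}=R_{b_1}+R_{b_2}$; and $R_{(a_1+a_2,b_1+b_2)}=R_{(a_1,b_1)}+R_{(a_2,b_2)}$ by Voiculescu's linearization \eqref{BiLin}. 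Substituting these three scalar additivity statements into the first formula of the lemma, applied to $X_1$, $X_2$ and $X_1+X_2$, yields \eqref{upper} entry by entry.
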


(Lemma \ref{lem1} holds for operator-valued random variables in a 
$B$-$B$-$C^*$-noncommutative probability space $(M, E, B)$. More precisely,
if $(a,b)\in M^2$ is a two-faced pair of $B$-valued random variables, then,
defining $X=\begin{bmatrix}
a & 0 \\
0 & b
\end{bmatrix}\in M_2(M)$, we have
$$
R_{X}\left(\begin{bmatrix}
z & \zeta\\
0 & w
\end{bmatrix}\right)=\begin{bmatrix}
R_a(z) & z^{-1}\zeta w^{-1}-
\Psi_{(a,b)}(K_{a}(z)^{-1},\zeta,K_{b}(w)^{-1})\\
0 & R_b(w)
\end{bmatrix},
$$
where $\Psi_{(a,b)}$ has been defined in Section \ref{back},
and Equation \eqref{upper} also holds in this context.)

We would like to emphasize again that $R_X$ above denotes the
$R$-transform of the $M_2(\mathbb C)$ (or $M_2(B)$)-valued 
random variable $X$, as introduced in \cite{V*}.
Thus, Equation \eqref{upper} implies that $X_1$ and $X_2$ ``mimic''
freeness in terms of the relations between their analytic
transforms when restricted to upper triangular matrices. More 
specifically:

\begin{remark}\label{rem2}
Let $(\mathcal A,\varphi)$ be a $C^*$-noncommutative
probability space. Assume that $(a_1,b_1)$ and $(a_2,b_2)$ are 
self-adjoint and bi-free with respect to $\varphi$. Define 
$X_j=\begin{bmatrix}
a_j & 0 \\
0 & b_j
\end{bmatrix}$, $j=1,2$ to be two self-adjoint random variables in the 
operator-valued noncommutative probability space 
$(M_2(\mathcal A),\varphi\otimes{\rm Id}_{M_2(\mathbb C)},
M_2(\mathbb C))$. Consider two random variables $Y_1,Y_2$ which are
free with respect to $M_2(\varphi):=\varphi\otimes{\rm Id}_{M_2(\mathbb C)}$, and 
such that the $*$-distribution of $X_j$ and $Y_j$ with respect to
$M_2(\varphi)$ coincide for $j=1,2$. 
Then the restrictions of the Cauchy and $R$-transforms of $X_1,X_2,
X_1+X_2$ and $Y_1,Y_2,Y_1+Y_2$, respectively, to the upper triangular 
$2\times2$ complex matrices with positive imaginary part coincide. The same statement holds for their restrictions
to the set of upper triangular $2\times2$ complex matrices having inverse of 
small norm. In particular, if $\omega_{Y_j}$ is the
$M_2(\mathbb C)$-valued subordination function satisfying
$G_{Y_j}\circ\omega_{Y_j}=G_{Y_1+Y_2}$, then 
$$
(G_{X_j}\circ\omega_{Y_j})\left(\begin{bmatrix}
z & \zeta\\
0 & w
\end{bmatrix}\right)^{-1}\!\!\!=G_{X_1+X_2}\left(\begin{bmatrix}
z & \zeta\\
0 & w
\end{bmatrix}\right)^{-1}\!\!\!=(\omega_{Y_1}+\omega_{Y_2})
\left(\begin{bmatrix}
z & \zeta\\
0 & w
\end{bmatrix}\right)-\begin{bmatrix}
z & \zeta\\
0 & w
\end{bmatrix},
$$
for all $z,w\in\mathbb C^+,\zeta\in\mathbb C,j=1,2$.
Thus,  
$\omega_{Y_j}=G_{Y_j}^{\langle-1\rangle}\circ G_{Y_1+Y_2}
=G_{X_j}^{\langle-1\rangle}\circ G_{X_1+X_2}$ when restricted
to matrices $\begin{bmatrix} z & \zeta\\0 & w\end{bmatrix}$ 
with $z,w$ having inverses of small norm.
(All relations above hold as well for operator-valued variables.)
\end{remark}

In light of Lemma \ref{lem1} and Equation \eqref{subord}, the proof 
of the above remark is obvious as soon as one accounts for the 
fact that the $M_2(\mathbb C)$-valued distribution of the
variable $X$ determines the $M_2(\mathbb C)$-valued Cauchy transform
of $X$. This remark allows us to recover Equation \eqref{four}.
Indeed, as it follows from \cite[Theorem 2.7]{BMS} that
the functions $\omega_{Y_j}$ map upper triangular
matrices to upper triangular matrices, we have that $\omega_{Y_j}
\left(\begin{bmatrix}
z & \zeta\\
0 & w
\end{bmatrix}\right)=\begin{bmatrix}
f_1 & f_2\\
0 & f_3
\end{bmatrix}.$ Since $X_j$ and $Y_j$ have the same distribution
and $X_j$ is diagonal and self-adjoint, so must be $Y_j$, and its
diagonal entries must have the same (joint) distribution as 
$(a_j,b_j)$. Thus, we obtain that
\begin{eqnarray*}
G_{Y_1+Y_2}\left(\begin{bmatrix}
z & \zeta\\
0 & w
\end{bmatrix}\right)&=&G_{X_j}\left(\begin{bmatrix}
f_1 & f_2\\
0 & f_3
\end{bmatrix}\right)\\
& = & \begin{bmatrix}
\varphi\left((f_1-a_j)^{-1}\right) & -\varphi\left((f_1-a_j)^{-1}f_2
(f_3-b_j)^{-1}\right)\\
0 & \varphi\left((f_3-b_j)^{-1}\right)
\end{bmatrix},
\end{eqnarray*}
which guarantees that $f_1=\omega_{a_j}(z),f_3=\omega_{b_j}(w)$.
%%%%%%%%%%%%%%%%%%%%%%%%%%%%%
Using again the previous remark, we obtain for $f_2=f_2(z,\zeta,w)$ and $z,w$ having small inverse
\begin{eqnarray*}
f_2 & = & \Psi_{(a_j,b_j)}\left(\omega_{a_1}(z)^{-1},\varphi\left(
(z-a_1-a_2)^{-1}\zeta(w-b_1-b_2)^{-1}\right),\omega_{b_1}(w)^{-1}
\right)\\
& = & \frac{\zeta
\varphi\left((z-a_1-a_2)^{-1}(w-b_1-b_2)^{-1}\right)}{\varphi\left(
(\omega_{a_j}(z)-a_j)^{-1}(\omega_{b_j}(w)-b_j)^{-1}\right)}.
\end{eqnarray*}
(Again, the first equality is true for operator-valued random variables.)
Recalling that $\omega_{Y_j}$ is defined and analytic on all matrices
$\begin{bmatrix}
z & \zeta\\
0 & w
\end{bmatrix}$ with positive imaginary part, we can write
$f_2(z,\zeta,w)G_{\eta_j}(\omega_{a_j}(z),\omega_{b_j}(w))=\zeta
G_{\eta_1\bboxplus\eta_2}(z,w)$. By analytic continuation, this relation 
holds for all $z,w\in\mathbb C^+$ and $\zeta\in\mathbb C$. In particular, 
we have $G_{\eta_1\bboxplus\eta_2}(z,w)=0$ whenever
$G_{\eta_j}(\omega_{a_j}(z),\omega_{b_j}(w))=0$, so that each connected component
in $\mathbb C^+\times\mathbb C^+$ of the zero set of $G_{\eta_j}(\omega_{a_j}(z),\omega_{b_j}(w))$
equals a connected component of the zero set of $G_{\eta_1\bboxplus\eta_2}$.
Replacing this in the upper triangular matrix-valued analogue (provided
above) of \eqref{subord} provides a slightly modified version of  Equation \eqref{four}:
\begin{equation}\label{subord12}
\frac{G_{\eta_1\bboxplus\eta_2}(z,w)}{G_{\eta_1}(\omega_{a_1}(z),\omega_{b_1}(w))}+\frac{G_{\eta_1\bboxplus\eta_2}(z,w)}{G_{\eta_2}(\omega_{a_2}(z),\omega_{b_2}(w))}
=\frac{G_{\eta_1\bboxplus\eta_2}(z,w)}{G_{\mu_1\boxplus\mu_2}(z)G_{\nu_1\boxplus\nu_2}(w)}+1,
\end{equation}
for all $z,w\in\mathbb C^+$, guaranteeing analytic extension through the zero sets
(in $\mathbb C^+\times\mathbb C^+$) of $G_{\eta_j}(\omega_{a_j}(z),\omega_{b_j}(w)),j=1,2$. (Observe that,
while in order to obtain a relation between Cauchy transforms of
measures in the plane, we need to assume that $a_j$ and $b_j$ commute,
the formal calculations above hold even in the absence of this 
hypothesis.) Thus, relation \eqref{subord12} holds on the connected set
$(\mathbb C^+\times\mathbb C^+)\cup(\mathbb C^-\times\mathbb C^-)
\cup\{(z,w)\in\mathbb C^2\colon|z|>\|a_1+a_2\|\text{ and }|w|>\|b_1+b_2\|\}$.
There is one obstacle to this relation extending to all of
$\mathbb C^+\times\mathbb C^-$, namely the possibility that 
$\frac{G_{\eta_j}(\omega_{a_j}(z_0),\omega_{b_j}(w_0))}{G_{\eta_1\bboxplus\eta_2}(z_0,w_0)}=0$ for some
$z_0\in\mathbb C^+,w_0\in\mathbb C^-,j\in\{1,2\}$. In this case, we have version \eqref{univ}
of equations \eqref{four} and \eqref{subord12} for computing $G_{\eta_1\bboxplus\eta_2}$ 
in terms of $G_{\eta_j},\omega_{a_j}$, and $\omega_{b_j},j=1,2$.

\begin{ex}
Relation \eqref{upper} does not extend to arbitrary $2\times2$
matrices $\begin{bmatrix}
z & \zeta\\
\zeta' & w
\end{bmatrix}$. In other words, $(a_1,b_1)$ and $(a_2,b_2)$ being bi-free
with respect to $\varphi$ does not necessarily imply that $X_1$ and $X_2$ are 
free with amalgamation over $M_2(\mathbb C)$, as it can be seen by computing 
some moments. Consider 
$$
M_2(\varphi)\left(\begin{bmatrix}
a_1 & 0 \\
0 & b_1
\end{bmatrix}\begin{bmatrix}
a_2 & 0 \\
0 & b_2
\end{bmatrix}\begin{bmatrix}
0 & 1 \\
0 & 0
\end{bmatrix}\begin{bmatrix}
a_2 & 0 \\
0 & b_2
\end{bmatrix}\begin{bmatrix}
a_1 & 0 \\
0 & b_1
\end{bmatrix}
\right)=\begin{bmatrix}
0 & \varphi(a_1a_2b_2b_1) \\
0 & 0
\end{bmatrix}.
$$
If $(a_1, b_1)$ and $(a_2,b_2)$ are bi-free with respect
to $\varphi$, then 
\begin{equation}\label{bi}
\varphi(a_1a_2b_2b_1) = \varphi(a_1b_1)\varphi(a_2)\varphi(b_2) + 
\varphi(a_2b_2)\varphi(a_1)\varphi(b_1) - \varphi(a_1)\varphi(a_2)\varphi(b_1)\varphi(b_2).
\end{equation}
On the other hand, if $\begin{bmatrix}
a_1 & 0 \\
0 & b_1
\end{bmatrix}$ and $\begin{bmatrix}
a_2 & 0 \\
0 & b_2
\end{bmatrix}$ are free with amalgamation over $M_2(\mathbb C)$, then
we would have
$$
M_2(\varphi)\left(\begin{bmatrix}
a_1 & 0 \\
0 & b_1
\end{bmatrix}\begin{bmatrix}
a_2 & 0 \\
0 & b_2
\end{bmatrix}\begin{bmatrix}
0 & 1 \\
0 & 0
\end{bmatrix}\begin{bmatrix}
a_2 & 0 \\
0 & b_2
\end{bmatrix}\begin{bmatrix}
a_1 & 0 \\
0 & b_1
\end{bmatrix}
\right)_{1,2}=\varphi(a_1b_1)\varphi(a_2b_2),
$$
which is generally different from the right hand-side of \eqref{bi}.
\end{ex}

\begin{remark}\label{rmk:biBoolean}
As mentioned in the introduction, the idea of constructing an $M_2(\mathbb C)$-valued 
random variable $X_j$ from the two-faced pair $(a_j, b_j)$ and considering the 
operator-valued transforms of $X_j$ was motivated by the study of bi-Boolean 
independence \cite{GS2}. We would like to elaborate a bit on this point. 
Given a probability measure $\mu$ on $\mathbb R$, define the analytic function 
\begin{equation}\label{h}
h_\mu\colon\mathbb{C}^+\to\mathbb C^+\cup\mathbb R,\quad h_\mu(z)=\frac{1}{G_\mu(z)}-z.
\end{equation} 
It was shown by Speicher and Woroudi in \cite{SW} that $h_\mu$ linearizes the Boolean additive convolution $\uplus$ 
in the sense that $h_{\mu_1\uplus\mu_2}(z) =h_{\mu_1}(z) +h_{\mu_2}(z),z\in\mathbb C^+$. 
Moreover, it was shown in \cite[Theorem 3.6]{SW} that every probability measure on $\mathbb R$ 
is $\uplus$-infinitely divisible. This result was later extended by Popa and Vinnikov to operator-valued
distributions in \cite{PV}. Given $X = X^*$ in a $W^*$-noncommutative probability space $(M,E,B)$, 
define $h_X$ by 
\begin{equation}\label{hop}
h_X(b)=G_X(b)^{-1}-b\quad b\in B,\Im b > 0\text{ or }\|b^{-1}\| < \|X\|^{-1}.
\end{equation} 
If $X_1$ and $X_2$ are Boolean independent with respect to $E$, then $h_{X_1 + X_2}(b)=h_{X_1}(b) +h_{X_2}(b)$. 
More importantly, it follows from \cite[Theorem 3.5]{PV} that the result on $\uplus$-infinite divisibility 
also holds in the $B$-valued setting (i.e. for every $n\in\mathbb{N}$, there exists $X_n = X_n^*$ such that $nh_{X_n}(b)=h_X(b)$).

On the other hand, given a probability measure $\eta$ on $\mathbb{R}^2$ 
with marginals $\mu$ and $\nu$, the function $\widetilde{E}_\eta$ was introduced in \cite[Section 4]{GS2} by
\begin{equation}\label{TildeE}
\widetilde{E}_\eta(z, w) = \frac{G_\eta(z, w)}{G_\mu(z)G_\nu(w)} - 1
, \quad (z, w) \in (\mathbb{C} \setminus \mathbb{R})^2,
\end{equation}
to study bi-Boolean independence.
The function $\widetilde{E}_\eta$ is the 
(reduced part of) the partial bi-Boolean self-energy which linearizes the bi-Boolean additive 
convolution $\uplus\uplus$ in the sense that $\widetilde{E}_\eta(z, w) = 
\widetilde{E}_{\eta_1}(z, w)+\widetilde{E}_{\eta_2}(z, w)$ if $\eta= \eta_1\uplus\uplus\eta_2$.
Due to the simple form of $\widetilde{E}_\eta$, it was natural to 
hypothesize that every probability measure on 
$\mathbb R^2$ is $\uplus\uplus$-infinitely divisible. 
When trying to prove the conjecture, we observed that if $(a, b)$ has joint distribution 
$\eta$ and if we consider the $M_2(\mathbb{C})$-valued random variable $X = \begin{bmatrix}
a & 0\\
0 & b
\end{bmatrix}$, then
\[h_X\left(\begin{bmatrix}
z & \zeta\\
0 & w
\end{bmatrix}\right) = \begin{bmatrix}
h_\mu(z) & \zeta\left(\frac{G_{\eta}(z, w)}{G_\mu(z)G_\nu(w)} - 1\right)\\
0 & h_\nu(w)
\end{bmatrix},\]
so that the three components of the partial bi-Boolean self-energy $E_\eta(z,w)=\widetilde{E}_\eta(z,w)-\frac1zh_\mu(z)-\frac1wh_\nu(w)$ 
appear as the three non-zero entries of 
$h_X$ evaluated at the upper triangular matrix $\begin{bmatrix}
z & \zeta\\
0 & w
\end{bmatrix}$. 
This, in connection to the above-mentioned result of Popa and Vinnikov, suggested that the conjecture might be true. 
However, that is not the case, as shown by the counterexample in \cite[Example 5.13]{GS2}. 
\end{remark}

The following consequence of Remark \ref{rem2} establishes some analytic properties
of the Cauchy transform of the bi-free additive convolution of
two probability measures in $\mathbb R^2$ that will help us
prove a converse of the characterization of bi-free extreme
values of Voiculescu.

\begin{prop}\label{Bound}
Let $(a_1, b_1)$ and $(a_2, b_2)$ be bi-free pairs of random variables. With the notations from Section {\rm \ref{back}}, we 
have 
$$
\quad\quad\quad\Im G_{a_j}(z)\Im G_{b_j}(w)\ge\Im z\Im w|G_{(a_j,b_j)}(z,w)|^2,
$$
and
$$
\Im\omega_{a_j}(z)\Im\omega_{b_j}(w)|G_{\eta_j}(\omega_{a_j}(z),
\omega_{b_j}(w))|^2\ge\Im z\Im w|G_{\eta_1\bboxplus\eta_2}(z,w)|^2,\quad
z,w\in\mathbb C^+.
$$
\end{prop}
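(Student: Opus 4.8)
The plan is to derive both inequalities from the positivity properties of the operator‑valued Cauchy transform $G_X$ of the matrix $X=\begin{bmatrix} a & 0\\ 0 & b\end{bmatrix}$, evaluated on upper triangular matrices with positive imaginary part. The starting point is the explicit formula
$$
G_{X_j}\left(\begin{bmatrix} z & \zeta\\ 0 & w\end{bmatrix}\right)
=\begin{bmatrix} G_{a_j}(z) & -\zeta G_{(a_j,b_j)}(z,w)\\ 0 & G_{b_j}(w)\end{bmatrix}
$$
together with the standard fact that, for a self‑adjoint $M_2(\mathbb C)$‑valued random variable $X_j$ and $v\in M_2(\mathbb C)$ with $\Im v>0$, one has $\Im G_{X_j}(v)\le 0$; more precisely $\Im G_{X_j}(v)=-(E\otimes\mathrm{Id})\big[(v-X_j)^{-1}{}^{*}(\Im v)(v-X_j)^{-1}\big]\le 0$. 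First I would choose $v=\begin{bmatrix} z & \zeta\\ 0 & w\end{bmatrix}$ with $z,w\in\mathbb C^+$ and a free parameter $\zeta$, compute $\Im v$ (which is $\begin{bmatrix}\Im z & \zeta/2i\\ -\bar\zeta/2i & \Im w\end{bmatrix}$ only when we want it positive — so in fact I must first restrict to those $\zeta$ making $\Im v>0$, namely $|\zeta|^2<4\Im z\,\Im w$), and read off the $(1,1)$, $(2,2)$ and $(1,2)$ entries of the matrix inequality $-\Im G_{X_j}(v)\ge 0$, i.e. positive semidefiniteness of a concrete $2\times 2$ matrix whose entries are $\Im G_{a_j}(z)$, $\Im G_{b_j}(w)$, and (a multiple of) $\zeta G_{(a_j,b_j)}(z,w)$ plus lower‑order corrections coming from $\Im v$ having an off‑diagonal part. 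Requiring the $2\times 2$ determinant to be nonnegative, and then optimizing over the allowed $\zeta$ (letting $|\zeta|^2\to 4\Im z\,\Im w$ along the phase that makes the cross term maximal), should collapse the determinant condition exactly to $\Im G_{a_j}(z)\,\Im G_{b_j}(w)\ge \Im z\,\Im w\,|G_{(a_j,b_j)}(z,w)|^2$. This proves the first inequality.

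For the second inequality I would apply the first inequality to the pair whose joint distribution is $\eta_j$, but evaluated at the points $\omega_{a_j}(z)$ and $\omega_{b_j}(w)$ in place of $z,w$; since $\omega_{a_j},\omega_{b_j}$ are self‑maps of $\mathbb C^+$ (Section \ref{at}), this gives
$$
\Im G_{a_j}(\omega_{a_j}(z))\,\Im G_{b_j}(\omega_{b_j}(w))\ge \Im\omega_{a_j}(z)\,\Im\omega_{b_j}(w)\,|G_{\eta_j}(\omega_{a_j}(z),\omega_{b_j}(w))|^2.
$$
Then I would use the subordination identities $G_{a_j}(\omega_{a_j}(z))=G_{\mu_1\boxplus\mu_2}(z)$ and $G_{b_j}(\omega_{b_j}(w))=G_{\nu_1\boxplus\nu_2}(w)$ from Equation \eqref{subord}, so the left side becomes $\Im G_{\mu_1\boxplus\mu_2}(z)\,\Im G_{\nu_1\boxplus\nu_2}(w)$, which by the first inequality applied to $(a_1+a_2,b_1+b_2)$ is itself $\ge \Im z\,\Im w\,|G_{\eta_1\bboxplus\eta_2}(z,w)|^2$ — wait, this is the wrong direction; I instead need a lower bound on $\Im\omega\cdot|G_{\eta_j}(\omega,\omega)|^2$, so the clean route is: by the first inequality for $(a_1+a_2,b_1+b_2)$, $\Im z\,\Im w\,|G_{\eta_1\bboxplus\eta_2}(z,w)|^2\le \Im G_{\mu_1\boxplus\mu_2}(z)\,\Im G_{\nu_1\boxplus\nu_2}(w)$, and then I bound the right side above by $\Im\omega_{a_j}(z)\,\Im\omega_{b_j}(w)\,|G_{\eta_j}(\omega_{a_j}(z),\omega_{b_j}(w))|^2$ using the matrix positivity of $\Im G_{X_j}$ evaluated at the upper triangular matrix $\omega_{Y_j}\!\left(\begin{bmatrix} z & \zeta\\ 0 & w\end{bmatrix}\right)$, whose diagonal entries are $\omega_{a_j}(z),\omega_{b_j}(w)$ and whose $(1,1)$, $(2,2)$ entries of $G_{X_j}\circ\omega_{Y_j}$ are $G_{\mu_1\boxplus\mu_2}(z),G_{\nu_1\boxplus\nu_2}(w)$ by Remark \ref{rem2}; the same determinant argument then yields $\Im G_{\mu_1\boxplus\mu_2}(z)\,\Im G_{\nu_1\boxplus\nu_2}(w)\le \Im\omega_{a_j}(z)\,\Im\omega_{b_j}(w)\,|G_{\eta_j}(\omega_{a_j}(z),\omega_{b_j}(w))|^2$ provided $\Im\omega_{Y_j}(v)\ge\Im v$, which is exactly the imaginary‑part–increasing property of Voiculescu's subordination functions. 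Chaining the two displayed inequalities gives the claim.

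The main obstacle I expect is the bookkeeping in the $2\times 2$ determinant computation: $\Im v$ for an upper triangular $v$ is \emph{not} diagonal, so one cannot directly write $\Im G_X(v)$ as minus a ``sandwiched'' positive matrix with $\Im v$ diagonal; I will need to optimize carefully over the off‑diagonal parameter $\zeta$ (equivalently, use that the inequality $-\Im G_X(v)\ge 0$ holds for \emph{all} admissible $v$ and extract the sharp scalar consequence by a limiting phase/modulus choice), and verify that the lower‑order terms involving $\zeta$ in the $(1,1)$ and $(2,2)$ entries of $\Im G_X(v)$ do not spoil the bound — they should drop out in the limit $|\zeta|^2\to 4\Im z\,\Im w$ because they are $O(|\zeta|^2)$ against an $O(|\zeta|)$ cross term only after the correct normalization, so a clean way is to rescale $\zeta\mapsto t\zeta$, expand the determinant as a polynomial in $t$, and use nonnegativity on the whole admissible interval of $t$. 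The remaining ingredients (self‑maps of $\mathbb C^+$, $\Im\omega\ge\Im v$, and the entrywise identifications) are quoted verbatim from Section \ref{at}, Remark \ref{rem2}, and \cite{BMS,V2000}.
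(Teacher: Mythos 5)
Your treatment of the first inequality is correct and is essentially the paper's own argument: one observes that $\Im\begin{bmatrix} z&\zeta\\0&w\end{bmatrix}>0$ iff $\Im z>0$, $\Im w>0$ and $4\Im z\,\Im w>|\zeta|^2$, applies the same criterion to $-G_{X_j}(v)$, and lets $|\zeta|\to2\sqrt{\Im z\,\Im w}$. The bookkeeping you worry about is vacuous: $G_{X_j}(v)$ is upper triangular with $\zeta$-independent diagonal entries $G_{a_j}(z),G_{b_j}(w)$, so the determinant condition is exactly $\Im G_{a_j}(z)\,\Im G_{b_j}(w)\ge\frac{|\zeta|^2}{4}|G_{(a_j,b_j)}(z,w)|^2$ with no lower-order corrections.

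The second inequality, however, has a genuine gap. Your chain passes through the intermediate claim
$$
\Im G_{\mu_1\boxplus\mu_2}(z)\,\Im G_{\nu_1\boxplus\nu_2}(w)\le\Im\omega_{a_j}(z)\,\Im\omega_{b_j}(w)\,\bigl|G_{\eta_j}(\omega_{a_j}(z),\omega_{b_j}(w))\bigr|^2,
$$
and this is false in general: applying your (correct) first inequality to the pair $(a_j,b_j)$ at the points $(\omega_{a_j}(z),\omega_{b_j}(w))\in\mathbb C^+\times\mathbb C^+$ and using $G_{a_j}\circ\omega_{a_j}=G_{\mu_1\boxplus\mu_2}$, $G_{b_j}\circ\omega_{b_j}=G_{\nu_1\boxplus\nu_2}$ from \eqref{subord} yields exactly the \emph{reverse} inequality, which is strict, for instance, when $a_j$ and $b_j$ are classically independent and neither is constant (it then factors into two strict Cauchy--Schwarz inequalities $-\Im G_\rho(u)\ge\Im u\,|G_\rho(u)|^2$). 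The mechanism you invoke, positivity of $-\Im G_{X_j}$ at the point $\omega_{Y_j}(v)$, cannot produce your claim either, since $G_{X_j}(\omega_{Y_j}(v))=G_{X_1+X_2}(v)$, so its determinant condition merely reproduces the first inequality for the sum and says nothing about $G_{\eta_j}$ at the subordinated points. The correct route, which is the paper's, is to apply the upper-triangular positivity criterion to $\Im\omega_{Y_j}(v)$ itself: by the computation following Remark \ref{rem2}, $\omega_{Y_j}(v)$ is upper triangular with diagonal entries $\omega_{a_j}(z),\omega_{b_j}(w)$ and $(1,2)$ entry $\zeta G_{\eta_1\bboxplus\eta_2}(z,w)/G_{\eta_j}(\omega_{a_j}(z),\omega_{b_j}(w))$, so $\Im\omega_{Y_j}(v)>0$ forces
$4\Im\omega_{a_j}(z)\Im\omega_{b_j}(w)>|\zeta|^2\,|G_{\eta_1\bboxplus\eta_2}(z,w)|^2/|G_{\eta_j}(\omega_{a_j}(z),\omega_{b_j}(w))|^2$, and letting $|\zeta|\to2\sqrt{\Im z\,\Im w}$ gives the second inequality directly. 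You quote the relevant facts (the upper-triangular form of $\omega_{Y_j}(v)$, the positivity of its imaginary part) but never use its off-diagonal entry, which is precisely where the ratio $|G_{\eta_1\bboxplus\eta_2}|^2/|G_{\eta_j}(\omega_{a_j},\omega_{b_j})|^2$ must enter.
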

\begin{proof}
The proof is based on a simple trick, which is a particular case of 
\cite[Proposition 3.1]{B}. Observe that $\Im \begin{bmatrix}
z & \zeta\\
0 & w
\end{bmatrix}>0$ if and only if $\Im z>0,\Im w>0$ and $4\Im z\Im w>|\zeta|^2$.
As $G_{X_j}$ maps elements from $M_2(\mathbb C)$ with positive imaginary part 
into elements from $M_2(\mathbb C)$ with negative imaginary part, it follows that
$4\Im G_{a_j}(z)\Im G_{b_j}(w)>|\zeta|^2|G_{(a_j,b_j)}(z,w)|^2$ whenever 
$\Im \begin{bmatrix}
z & \zeta\\
0 & w
\end{bmatrix}>0$. Letting $|\zeta|$ tend to $2\sqrt{\Im z\Im w}$ from below
yields
$$
\Im G_{a_j}(z)\Im G_{b_j}(w)\ge\Im z\Im w|G_{(a_j,b_j)}(z,w)|^2,\quad
z,w\in\mathbb C^+.
$$
The second relation follows from the fact that $\Im \omega_{X_j}\left(\begin{bmatrix}
z & \zeta\\
0 & w
\end{bmatrix}\right)>0$ whenever $\Im \begin{bmatrix}
z & \zeta\\
0 & w
\end{bmatrix}>0$, and the fact that
$$
\omega_{X_j}\left(\begin{bmatrix}
z & \zeta\\
0 & w
\end{bmatrix}\right)=\left(\begin{bmatrix}
\omega_{a_j}(z) & \frac{\zeta G_{\eta_1\bboxplus\eta_2}(z,w)}{G_{\eta_j}(\omega_{a_j}(z),\omega_{b_j}(w))}\\
0 & \omega_{b_j}(w)
\end{bmatrix}\right),
$$
as shown in the computations following Remark \ref{rem2}.
Indeed, it is known from \cite{V2000} that the maps $\omega$ 
introduced in \eqref{subord} send the set of elements with 
positive imaginary part into itself.
We obtain again that $\Im z>0,\Im w>0$ and $4\Im z\Im w>|\zeta|^2$ imply together
that $4\Im\omega_{a_j}(z)\Im\omega_{b_j}(w)>\left|
\frac{\zeta G_{\eta_1\bboxplus\eta_2}(z,w)}{G_{\eta_j}(\omega_{a_j}(z),\omega_{b_j}(w))}\right|^2$.
Letting $|\zeta|$ tend to $2\sqrt{\Im z\Im w}$ from below
allows us to conclude.
\end{proof}

We emphasize that this proposition allows us to make sense of Equation \eqref{four} as an equality of meromorphic functions on 
$\mathbb C^+\times\mathbb C^+$, and, given the properties of the one- and two-variables Cauchy transform, also on
$\mathbb C^-\times\mathbb C^-$. For the purposes of this article, we call a function $H(z,w)$ meromorphic if for any
$(z_0,w_0)$ in its domain we either have that $H$ is holomorphic around $(z_0,w_0)$, or both $z\mapsto H(z,w_0)$
and $w\mapsto H(z_0,w)$ are one-variable meromorphic functions on some neighborhood of $z_0$, and $w_0$, 
respectively. We have shown that the embodiment of Equation \eqref{four} in the shape of Equation \eqref{subord12}
makes sense as an equality of analytic functions; in particular, the zero set of $G_{\eta_1\bboxplus\eta_2}$ includes 
the zero sets of both $G_{\eta_j}(\omega_{a_j}(z),\omega_{b_j}(w)),j=1,2$. However, thanks to the previous proposition 
and Equation \eqref{subord12}, we can conclude more: if $G_{\eta_1\bboxplus\eta_2}(z_0,w_0)=0$, then
 $G_{\eta_1}(\omega_{a_1}(z_0),\omega_{b_1}(w_0))G_{\eta_2}(\omega_{a_2}(z_0),\omega_{b_2}(w_0))=0$. 
Indeed, let us assume that $G_{\eta_1}(\omega_{a_1}(z_0),\omega_{b_1}(w_0))\neq0$. By approaching $(z_0,w_0)$ 
from the complement of the zero sets of the three functions involved, we obtain 
$$
\lim_{(z,w)\to(z_0,w_0)}\frac{G_{\eta_1\bboxplus\eta_2}(z,w)}{G_{\eta_2}(\omega_{a_2}(z),\omega_{b_2}(w))}=1,
$$
so that $G_{\eta_2}(\omega_{a_2}(z_0),\omega_{b_2}(w_0))=0$. As shown in Section \ref{at}, 
$z\mapsto G_{\eta_1\bboxplus\eta_2}(z,w_0)$, $z\mapsto G_{\eta_j}(\omega_{a_j}(z),\omega_{b_j}(w_0))$
are not identically zero, so that $z_0$ is a zero of finite order for all these functions. The same statement
holds for $w$. If both $z\mapsto G_{\eta_j}(\omega_{a_j}(z),\omega_{b_j}(w_0))$, $j=1,2$, have a 
zero of precisely the same order at $z_0$, then so does $z\mapsto G_{\eta_1\bboxplus\eta_2}(z,w_0)$,
and relation \eqref{four} makes sense for those meromorphic functions when written as Laurent series 
around $z=z_0$. If one (say $z\mapsto
G_{\eta_2}(\omega_{a_2}(z),\omega_{b_2}(w_0))$ has a zero of higher order than the other, then
$z\mapsto G_{\eta_1\bboxplus\eta_2}(z,w_0)$ has a zero of the same order as $z\mapsto
G_{\eta_2}(\omega_{a_2}(z),\omega_{b_2}(w_0))$, so again \eqref{four} makes sense.

The main result of this section follows easily from the above 
considerations, Remark \ref{rem2} and Equation \eqref{seven}. 
%%%%%%%%%%%%%%%%%%%%%%%%%%%%%%%%%%%%%%%%%%
%%%%%%%%%%%%%%%%%%%%%%%%%%%%%%%%%%%%%%%%%%
%%%%%%%%%%%%%%%%%%%%%%%%%%%%%%%%%%%%%%%%%%%
%%%%%%%%%%%%%%%%%%%%%%%%%%%%%%%%%%%%%%%%%%%
\begin{theorem}\label{Main}
Let $(M, E, B)$ be a $C^*$-$B$-$B$-noncommutative probability space, for 
a $C^*$-algebra $B$.
Assume that $(a_1,b_1)$ and $(a_2,b_2)$ are  self-adjoint random 
variables in $M$ which are bi-free over $B$ with respect to 
$E$. Denote
$X_j=\begin{bmatrix}
a_j & 0 \\
0 & b_j
\end{bmatrix}$, $j=1,2$. Then 
\begin{eqnarray*}
\lefteqn{M_2(E)\left[\left(\begin{bmatrix}
z & \zeta\\
0 & w
\end{bmatrix}-X_1-X_2\right)^{-1}\right]=M_2(E)\left[\left(\begin{bmatrix}
\omega_{a_1}(z) & \Pi_1(z,\zeta,w)\\
0 & \omega_{b_1}(w)
\end{bmatrix}-X_1\right)^{-1}\right]}\quad\quad\quad\quad\quad\quad\quad\quad\quad\\
& = & 
\begin{bmatrix}
G_{a_1}(\omega_{a_1}(z)) & G_{(a_1,b_1)}(\omega_{a_1}(z),
\Pi_1(z,\zeta,w),\omega_{b_1}(w))\\
0 & G_{b_1}(\omega_{b_1}(w))
\end{bmatrix},
\end{eqnarray*}
where $\Pi_1(z,\cdot,w)$ is the composition of the following linear 
maps depending analytically on $z,w$:
\begin{eqnarray*}
\lefteqn{\Pi_1(z,\zeta,w)=
\left[G_{(a_1,b_1)}(\omega_{a_1}(z),\cdot,\omega_{b_1}(w))
+G_{(a_2,b_2)}(\omega_{a_2}(z),\cdot,\omega_{b_2}(w))\right.-}\\
& & \left.G_{(a_2,b_2)}(\omega_{a_2}(z),G_{a_1}(\omega_{a_1}(z))^{-1}
G_{(a_1,b_1)}(\omega_{a_1}(z),\cdot,\omega_{b_1}(w))
G_{b_1}(\omega_{b_1}(w))^{-1},\omega_{b_2}(w))\right]^{\langle-1\rangle}\\
& & \mbox{}\circ G_{(a_2,b_2)}(\omega_{a_2}(z),\zeta,\omega_{b_2}(w)).
\end{eqnarray*}
If $B=\mathbb C$, then $E$ is a state and the following simpler expression for $\Pi_1$ holds
$$
\Pi_1(z,\zeta,w)=\frac{\frac{\zeta}{G_{(a_1,b_1)}(\omega_{a_1}(z),
\omega_{b_1}(w))}}{\frac{1}{G_{(a_1,b_1)}(\omega_{a_1}(z),
\omega_{b_1}(w))}+\frac{1}{G_{(a_2,b_2)}(\omega_{a_2}(z),
\omega_{b_2}(w))}-\frac{1}{G_{a_1}(\omega_{a_1}(z))G_{b_1}(\omega_{b_1}(w))}}
$$
for all $z,w\in\mathbb C^+$, $\zeta\in\mathbb C$. Here $\omega_{a_1}$
and $\omega_{b_1}$ are the subordination functions introduced in 
Section 2, Equation \eqref{subord}, and $M_2(E)$ is the conditional 
expectation onto $M_2(B)$ given by $E\otimes{\rm Id}_{M_2(B)}$.
\end{theorem}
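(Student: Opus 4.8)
The plan is to obtain the statement by assembling three facts already in place: Voiculescu's operator-valued subordination for $B$-valued free additive convolution, the block computations for $G_{X_j}$ and $R_{X_j}$ on upper triangular matrices carried out at the start of this section together with Remark~\ref{rem2} and Lemma~\ref{lem1}, and the $M_2(B)$-valued version of Equation~\eqref{four} obtained from Equation~\eqref{seven}. Beyond that, one only needs an elementary identity for compositional inverses of linear maps on $B$.

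First I would fix $v=\begin{bmatrix} z & \zeta \\ 0 & w\end{bmatrix}$ and, exactly as in Remark~\ref{rem2} (whose conclusions remain valid for operator-valued variables, as noted there), introduce $Y_1,Y_2$ free over $M_2(B)$ with respect to $M_2(E)$ and having the same $M_2(B)$-valued distributions as $X_1,X_2$; then the transforms $G$ and $R$ of $X_j$ and $X_1+X_2$ coincide with those of $Y_j$ and $Y_1+Y_2$ on upper triangular matrices. Voiculescu's subordination (\cite{V2000}, see also \cite{BMS}) applied to the free pair $Y_1,Y_2$ yields $\omega_{Y_1}$ with $G_{Y_1+Y_2}(v)=G_{Y_1}(\omega_{Y_1}(v))$ for $\Im v>0$; by \cite[Theorem~2.7]{BMS} the map $\omega_{Y_1}$ sends upper triangular matrices to upper triangular matrices, so $\omega_{Y_1}(v)=\begin{bmatrix}\alpha_1 & \Pi_1 \\ 0 & \beta_1\end{bmatrix}$, and since $\omega_{Y_1}$ increases imaginary parts, the Schur-complement formula for $G_{X_1}=G_{Y_1}$ evaluated at $\omega_{Y_1}(v)$ gives
$$
M_2(E)\left[\left(\omega_{Y_1}(v)-X_1\right)^{-1}\right]=\begin{bmatrix} G_{a_1}(\alpha_1) & G_{(a_1,b_1)}(\alpha_1,\Pi_1,\beta_1) \\ 0 & G_{b_1}(\beta_1)\end{bmatrix}=G_{X_1+X_2}(v).
$$
Matching the diagonal entries of this with the Schur expansion of $G_{X_1+X_2}(v)$ forces $G_{a_1}(\alpha_1)=G_{a_1+a_2}(z)$ and $G_{b_1}(\beta_1)=G_{b_1+b_2}(w)$; invertibility of $G_{a_1},G_{b_1}$ near infinity (where $\alpha_1,\beta_1$ lie when $\|z^{-1}\|,\|w^{-1}\|$ are small), followed by analytic continuation, then gives $\alpha_1=\omega_{a_1}(z)$ and $\beta_1=\omega_{b_1}(w)$ on all of $\mathbb C^+\times\mathbb C^+$ (resp.\ the appropriate domain in $B\times B$). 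This already establishes the two displayed equalities of the theorem, $\Pi_1=\Pi_1(z,\zeta,w)$ being the as-yet-unidentified off-diagonal entry.

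To identify $\Pi_1$, set $T_1=G_{(a_1,b_1)}(\omega_{a_1}(z),\cdot,\omega_{b_1}(w))$ and $T_2=G_{(a_2,b_2)}(\omega_{a_2}(z),\cdot,\omega_{b_2}(w))$ as invertible linear maps on $B$, and $L(\cdot)=P^{-1}(\cdot)Q^{-1}$, where $P=G_{a_1}(\omega_{a_1}(z))=G_{a_1+a_2}(z)=G_{a_2}(\omega_{a_2}(z))$ and $Q=G_{b_1}(\omega_{b_1}(w))=G_{b_1+b_2}(w)=G_{b_2}(\omega_{b_2}(w))$ by the $B$-valued form of \eqref{subord}. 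Comparing the $(1,2)$-entries in the display above gives $G_{(a_1,b_1)}(\omega_{a_1}(z),\Pi_1,\omega_{b_1}(w))=G_{(a_1+a_2,b_1+b_2)}(z,\zeta,w)$, i.e.\ $\Pi_1=T_1^{\langle-1\rangle}\bigl(G_{(a_1+a_2,b_1+b_2)}(z,\zeta,w)\bigr)$, wherever the relevant maps are invertible. On the other hand, Equation~\eqref{seven} for the pair $(a_j,b_j)$, rewritten using $\Psi_{(a,b)}(b'^{-1},\cdot,d'^{-1})=G_{(a,b)}(b',\cdot,d')^{\langle-1\rangle}$ and the above identities for $P,Q$, reads (after left multiplication by $P^{-1}$ and right multiplication by $Q^{-1}$)
$$
G_{(a_1+a_2,b_1+b_2)}(z,\cdot,w)^{\langle-1\rangle}=T_1^{\langle-1\rangle}+T_2^{\langle-1\rangle}-L.
$$
Substituting and using the elementary identity $(T_1^{\langle-1\rangle}+T_2^{\langle-1\rangle}-L)\circ T_1={\rm Id}+T_2^{\langle-1\rangle}T_1-LT_1=T_2^{\langle-1\rangle}\circ(T_1+T_2-T_2LT_1)$, one obtains $\Pi_1=\bigl((T_1^{\langle-1\rangle}+T_2^{\langle-1\rangle}-L)\circ T_1\bigr)^{\langle-1\rangle}(\zeta)=(T_1+T_2-T_2LT_1)^{\langle-1\rangle}\circ T_2\,(\zeta)$, which is exactly the formula in the statement, since $T_2LT_1$ is $G_{(a_2,b_2)}(\omega_{a_2}(z),\,G_{a_1}(\omega_{a_1}(z))^{-1}G_{(a_1,b_1)}(\omega_{a_1}(z),\cdot,\omega_{b_1}(w))G_{b_1}(\omega_{b_1}(w))^{-1},\,\omega_{b_2}(w))$ and $T_2(\zeta)=G_{(a_2,b_2)}(\omega_{a_2}(z),\zeta,\omega_{b_2}(w))$. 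When $B=\mathbb C$ the maps $T_1,T_2,L$ are scalar multiplications, and dividing numerator and denominator by $G_{(a_1,b_1)}(\omega_{a_1}(z),\omega_{b_1}(w))G_{(a_2,b_2)}(\omega_{a_2}(z),\omega_{b_2}(w))$ turns the result into the stated closed form; equivalently, \eqref{four} identifies the denominator as $1/G_{\eta_1\bboxplus\eta_2}(z,w)$, recovering $\Pi_1=\zeta\,G_{\eta_1\bboxplus\eta_2}(z,w)/G_{\eta_1}(\omega_{a_1}(z),\omega_{b_1}(w))$ as found after Remark~\ref{rem2}.

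The main obstacle, and the only truly delicate point, is the domain bookkeeping: one must ensure that \eqref{seven}---a priori valid only where $\|z^{-1}\|,\|w^{-1}\|$ are small---survives inversion and composition with $T_1^{\langle-1\rangle}$ on a common open set on which every linear map appearing is invertible, and that the formula for $\Pi_1$ then extends analytically to all $z,w\in\mathbb C^+$, which it does because $\omega_{Y_1}$ is globally defined and analytic on the upper triangular matrices with positive imaginary part, exactly as in the argument following Remark~\ref{rem2}. Everything else reduces to the Schur-complement computations already carried out in this section and to the one-line operator identity displayed above.
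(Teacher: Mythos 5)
Your proposal is correct and follows essentially the same route as the paper: the paper's proof is a one-line appeal to Remark \ref{rem2}, the computation of the off-diagonal entry $f_2$ following it, and Equation \eqref{seven}, which are exactly the ingredients you assemble. The only added value in your write-up is that you make explicit the algebraic step $(T_1^{\langle-1\rangle}+T_2^{\langle-1\rangle}-L)\circ T_1=T_2^{\langle-1\rangle}\circ(T_1+T_2-T_2LT_1)$ identifying the operator-valued expression for $\Pi_1$, which the paper leaves implicit.
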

\begin{proof}
Given Equation \eqref{four}, we only need to argue that the expression for 
$\Pi_1$ makes sense. This follows directly from the considerations before the statement of our Theorem.
\end{proof}
Note that while the existence and analyticity of the operator-valued version of 
$\Pi_1$ is shown in Remark \ref{rem2} and the considerations following it, the
ingredients of its expression  as provided by the above theorem are guaranteed 
to be analytic only for $z^{-1},w^{-1}$ small in norm.

The following corollary is the converse of \cite[Theorem 2.1]{BiFreeExtreme}.

\begin{corollary}\label{nucular}
Assume that $(a_1,b_1)$ and $(a_2,b_2)$ are bi-free bi-partite 
self-adjoint two-faced pairs. We denote by $\eta_j$ the distribution
of $(a_j,b_j)$ and by $\mu_j$ and $\nu_j$ its first and second 
marginal, respectively. Assume that there exists a point
$(\xi,\zeta)\in\mathbb R^2$ such that $(\eta_1\bboxplus\eta_2)(\{
(\xi,\zeta)\})>0$. Then there exist $(\xi_j,\zeta_j)\in\mathbb R^2$,
$j=1,2,$ such that $(\xi_1+\xi_2,\zeta_1+\zeta_2)=(\xi,\zeta)$ and
$\eta_j(\{(\xi_j,\zeta_j)\})>0$. Moreover,
$$
1+\frac{(\mu_1\boxplus\mu_2)(\{\xi\})(\nu_1\boxplus\nu_2)(\{\zeta\})}{
(\eta_1\bboxplus\eta_2)(\{(\xi,\zeta)\})}=
\frac{\mu_1(\{\xi_1\})\nu_1(\{\zeta_1\})}{\eta_1(\{(\xi_1,\zeta_1)\})}+
\frac{\mu_2(\{\xi_2\})\nu_2(\{\zeta_2\})}{\eta_2(\{(\xi_2,\zeta_2)\})}.
$$
\end{corollary}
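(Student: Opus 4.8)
The strategy is to extract the point masses from the behavior of the relevant Cauchy transforms as $(z,w)$ approaches $(\xi,\zeta)$ nontangentially inside $\mathbb{C}^+\times\mathbb{C}^+$, and then to read off the identity from Equation \eqref{subord12}. First I would recall the classical one-variable fact: for a probability measure $\mu$ on $\mathbb R$, $\mu(\{\xi\})=\lim_{y\downarrow0}(-iy)G_\mu(\xi+iy)=\lim_{y\downarrow0}\Im(iy)\cdot\big(-\Im G_\mu(\xi+iy)\big)$, and more relevantly that $\omega_{a_j}$ maps $\xi+iy$ to a point whose imaginary part behaves like $y\cdot(\mu_1\boxplus\mu_2)(\{\xi\})/\mu_j(\{\xi_j\})$ as $y\downarrow 0$, where $\xi_j$ is determined by $\omega_{a_j}(\xi+i0^+)=\xi_j$; this is exactly the mechanism in \cite{BiFreeExtreme} and in the classical atoms result for $\boxplus$. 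The key point is that an atom of $\mu_1\boxplus\mu_2$ at $\xi$ forces each $\omega_{a_j}$ to have a finite nontangential limit $\xi_j\in\mathbb R$ with $\xi_1+\xi_2=\xi$, that $\mu_j(\{\xi_j\})>0$, and that $(\mu_1\boxplus\mu_2)(\{\xi\})^{-1}=\mu_1(\{\xi_1\})^{-1}+\mu_2(\{\xi_2\})^{-1}-1$ (the classical Bercovici--Voiculescu atom formula); analogously for the $\nu$'s, with $\omega_{b_j}(\zeta+i0^+)=\zeta_j$.

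Next I would analyze the two-variable transform. Fix the approach $z=\xi+iy$, $w=\zeta+iy$ with $y\downarrow0$. From the first inequality of Proposition \ref{Bound}, $\Im z\,\Im w\,|G_{(a_j,b_j)}(z,w)|^2\le \Im G_{a_j}(z)\Im G_{b_j}(w)$, and since $\Im G_{a_j}(\xi+iy)=O(1)$ in general but $=O(y^{-1})\cdot(-\Im G)\sim$ — more precisely $\Im G_{a_j}(\xi+iy)\to -\infty$ only if $\mu_j$ has an atom, and in general $y\,\Im G_{a_j}(\xi+iy)\to -\pi\cdot(\text{density part})-$, the cleanest route is: $y^2|G_{\eta_j}(\xi+iy,\zeta+iy)|^2$ stays bounded, hence $y\,|G_{\eta_j}(\omega_{a_j},\omega_{b_j})|$ is controlled. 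The real input I need is a two-variable atom detection formula: $(\eta_1\bboxplus\eta_2)(\{(\xi,\zeta)\})=\lim_{y\downarrow0} (-y^2)\,G_{\eta_1\bboxplus\eta_2}(\xi+iy,\zeta+iy)\cdot(\text{sign/normalization})$ — concretely, for a compactly supported probability measure $\eta$ on $\mathbb R^2$, $\lim_{y\downarrow0}(-y)^2 G_\eta(\xi+iy,\zeta+iy)=-\eta(\{(\xi,\zeta)\})$, which follows by dominated convergence applied to $\int y^2[(\xi-t+iy)(\zeta-s+iy)]^{-1}\,d\eta(t,s)$ since the integrand is bounded by $1$ and tends to $\mathbf 1_{\{(t,s)=(\xi,\zeta)\}}$. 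Then I evaluate $\lim_{y\downarrow0}(-y)^2 G_{\eta_j}(\omega_{a_j}(\xi+iy),\omega_{b_j}(\zeta+iy))$: writing $\omega_{a_j}(\xi+iy)=\xi_j+i\alpha_j y+o(y)$ with $\alpha_j=(\mu_1\boxplus\mu_2)(\{\xi\})/\mu_j(\{\xi_j\})$ (and similarly $\beta_j$ for the $\nu$'s), the same dominated-convergence computation gives $\lim_{y\downarrow0}(-y)^2 G_{\eta_j}(\omega_{a_j},\omega_{b_j})=-\alpha_j\beta_j\,\eta_j(\{(\xi_j,\zeta_j)\})$.

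Now I substitute into Equation \eqref{subord12}. Multiplying \eqref{subord12} through and rearranging gives
$$
\frac{1}{G_{\eta_1}(\omega_{a_1}(z),\omega_{b_1}(w))}+\frac{1}{G_{\eta_2}(\omega_{a_2}(z),\omega_{b_2}(w))}=\frac{1}{G_{\mu_1\boxplus\mu_2}(z)G_{\nu_1\boxplus\nu_2}(w)}+\frac{1}{G_{\eta_1\bboxplus\eta_2}(z,w)};
$$
multiply both sides by $-y^2$ and take $y\downarrow0$. The left side tends to $-\big(\alpha_1\beta_1\eta_1(\{(\xi_1,\zeta_1)\})\big)^{-1}-\big(\alpha_2\beta_2\eta_2(\{(\xi_2,\zeta_2)\})\big)^{-1}$ — here I must first note that each $G_{\eta_j}(\omega_{a_j},\omega_{b_j})$ genuinely blows up like $y^{-2}$, i.e. $\alpha_j\beta_j\eta_j(\{(\xi_j,\zeta_j)\})>0$, which is forced because $G_{\eta_1\bboxplus\eta_2}$ blows up like $y^{-2}$ (that's the hypothesis $(\eta_1\bboxplus\eta_2)(\{(\xi,\zeta)\})>0$) while $G_{\mu_1\boxplus\mu_2}(z)G_{\nu_1\boxplus\nu_2}(w)$ blows up at most like $y^{-2}$ as well, so at least one summand on the left must blow up, and then Proposition \ref{Bound} (second inequality) together with $\alpha_j,\beta_j>0$ propagates this to both; this simultaneously proves $\eta_j(\{(\xi_j,\zeta_j)\})>0$ for $j=1,2$. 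The right side tends to $-\big((\mu_1\boxplus\mu_2)(\{\xi\})(\nu_1\boxplus\nu_2)(\{\zeta\})\big)^{-1}-\big((\eta_1\bboxplus\eta_2)(\{(\xi,\zeta)\})\big)^{-1}$. Substituting $\alpha_j=(\mu_1\boxplus\mu_2)(\{\xi\})/\mu_j(\{\xi_j\})$ and $\beta_j=(\nu_1\boxplus\nu_2)(\{\zeta\})/\nu_j(\{\zeta_j\})$, dividing through by $-(\mu_1\boxplus\mu_2)(\{\xi\})^{-1}(\nu_1\boxplus\nu_2)(\{\zeta\})^{-1}$, and also using $\xi_1+\xi_2=\xi$, $\zeta_1+\zeta_2=\zeta$ (from the classical $\boxplus$-atom analysis), gives precisely the claimed identity.

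\textbf{Main obstacle.} The delicate part is the precise first-order asymptotics $\omega_{a_j}(\xi+iy)=\xi_j+i\alpha_j y+o(y)$ with the stated constant $\alpha_j$, and the attendant claim that nontangential limits exist. This is essentially contained in the Bercovici--Voiculescu theory of atoms for $\boxplus$ \cite{BBSubord} and its use in \cite{BiFreeExtreme}, but writing it carefully — in particular justifying that the remainder is genuinely $o(y)$ and not merely $O(y)$, and that the limit may be taken along the diagonal $z=w=\cdot+iy$ rather than along a general nontangential cone — requires invoking the Julia--Carathéodory theorem for the self-maps $\omega_{a_j},\omega_{b_j}$ of $\mathbb C^+$ and checking the finite-angular-derivative condition at $\xi$ and $\zeta$; this is where the hypothesis $(\eta_1\bboxplus\eta_2)(\{(\xi,\zeta)\})>0$ feeds back, via the one-variable atoms of $\mu_1\boxplus\mu_2$ and $\nu_1\boxplus\nu_2$, to guarantee the angular derivatives are finite and positive. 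A secondary technical point is interchanging limit and integral uniformly in the two-variable setting when the points $\omega_{a_j}(\xi+iy),\omega_{b_j}(\zeta+iy)$ themselves vary; the bound $|y^2/((\omega_{a_j}-t)(\omega_{b_j}-s))|\le \Im z\,\Im w/(\Im\omega_{a_j}\,\Im\omega_{b_j})\cdot(\ldots)$ and the fact that $\Im\omega_{a_j}(\xi+iy)\asymp y$ keep the integrand dominated, so dominated convergence applies, but this uniformity should be stated explicitly.
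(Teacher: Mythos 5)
Your proposal follows essentially the same route as the paper: reduce to the Bercovici--Voiculescu atom theorem for $\mu_1\boxplus\mu_2$ and $\nu_1\boxplus\nu_2$ to produce $\xi_j,\zeta_j$ with $\xi_1+\xi_2=\xi$, $\zeta_1+\zeta_2=\zeta$; extract the nontangential boundary behaviour of $\omega_{a_j},\omega_{b_j}$ at the atoms; compute the limits of $y^2$ times the various two-variable Cauchy transforms by dominated convergence; get positivity of each $\eta_j(\{(\xi_j,\zeta_j)\})$ from the second inequality of Proposition \ref{Bound}; and read off the identity from Equation \eqref{four} divided by $y^2$. The structure is right and the final identity is the correct one.

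However, two of your stated constants are reciprocals of the correct values, and they happen to cancel. The angular derivative of the subordination function at the atom is
$$
\lim_{y\downarrow0}\frac{\Im\omega_{a_j}(\xi+iy)}{y}=\frac{\mu_j(\{\xi_j\})}{(\mu_1\boxplus\mu_2)(\{\xi\})},
$$
not $(\mu_1\boxplus\mu_2)(\{\xi\})/\mu_j(\{\xi_j\})$ as you write: from $G_{\mu_1\boxplus\mu_2}=G_{\mu_j}\circ\omega_{a_j}$ one gets $\omega_{a_j}(\xi+iy)-\xi_j\sim iy\,\mu_j(\{\xi_j\})/(\mu_1\boxplus\mu_2)(\{\xi\})$, and your value is inconsistent with $\omega_{a_1}+\omega_{a_2}-z=1/G_{\mu_1\boxplus\mu_2}$ combined with $\mu_1(\{\xi_1\})+\mu_2(\{\xi_2\})=(\mu_1\boxplus\mu_2)(\{\xi\})+1$ from \cite{bv}. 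Correspondingly, with $\omega_{a_j}(\xi+iy)-\xi_j\sim i\alpha_j y$ and $\omega_{b_j}(\zeta+iy)-\zeta_j\sim i\beta_j y$, the dominated-convergence computation gives $\lim_{y\downarrow0}y^2G_{\eta_j}(\omega_{a_j},\omega_{b_j})=-\eta_j(\{(\xi_j,\zeta_j)\})/(\alpha_j\beta_j)$, with $\alpha_j\beta_j$ in the denominator, not the numerator as you claim. These two reciprocal slips compensate exactly, so the number you substitute into \eqref{four} is correct and the claimed identity does come out; but as written, each of the two intermediate statements is false, and a reader checking either one in isolation would reject it. Two smaller points: you should \emph{divide} by $y^2$ rather than ``multiply both sides by $-y^2$'' (each term of \eqref{four} is already $O(y^2)$, so multiplying by $y^2$ sends everything to zero), and the digression on the asymptotics of $\Im G_{a_j}(\xi+iy)$ in your second paragraph is garbled and unnecessary --- the first inequality of Proposition \ref{Bound} plays no role here; only the second one is needed, and it yields $\bigl(\tfrac{(\mu_1\boxplus\mu_2)(\{\xi\})(\nu_1\boxplus\nu_2)(\{\zeta\})}{\mu_j(\{\xi_j\})\nu_j(\{\zeta_j\})}\bigr)^{1/2}\eta_j(\{(\xi_j,\zeta_j)\})\ge(\eta_1\bboxplus\eta_2)(\{(\xi,\zeta)\})>0$ for each $j$ directly, without the ``at least one summand blows up'' detour.
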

\begin{proof}
Observe first that 
$$(\mu_1\boxplus\mu_2)(\{\xi\})=
\int_\mathbb R{\bf 1}_{\{\xi\}\times\mathbb R}(s,t)\,{\rm d}
(\eta_1\bboxplus\eta_2)(s,t)\ge(\eta_1\bboxplus\eta_2)(\{(\xi,\zeta)\}
),$$
with a similar result for $\nu_1\boxplus\nu_2$. Thus,  
\cite[Theorem 7.4]{bv} indicates that
$\mu_j$ and $\nu_j$ all have atoms. More precise, there are $\xi_j,
\zeta_j\in\mathbb R,j=1,2,$ such that 
\begin{enumerate}
\item $1<\mu_1(\{\xi_1\})+\mu_2(\{\xi_2\})=
(\mu_1\boxplus\mu_2)(\{\xi\})+1$;
\item $1<\nu_1(\{\zeta_1\})+\nu_2(\{\zeta_2\})=
(\nu_1\boxplus\nu_2)(\{\zeta\})+1$;
\item $\xi_1+\xi_2=\xi;$
\item $\zeta_1+\zeta_2=\zeta$.
\end{enumerate}
According to the same article \cite{bv}, from this it follows that 
$\omega_{a_j}(iy+\xi)$, $\omega_{b_j}(iy+\zeta)$ tend nontangentially 
to $\xi_j$ and $\zeta_j$, respectively, as $y\downarrow0$, and
$$
\lim_{y\downarrow0}\frac{\Im\omega_{a_j}(\xi+iy)}{y}=\frac{\mu_j
(\{\xi_j\})}{(\mu_1\boxplus\mu_2)(\{\xi\})},\quad
\lim_{y\downarrow0}\frac{\Im\omega_{b_j}(\zeta+iy)}{y}=\frac{\nu_j
(\{\zeta_j\})}{(\nu_1\boxplus\nu_2)(\{\zeta\})}.
$$ 
From the 
dominated convergence theorem, we know that for any finite measure
$\eta$ in the plane and any sequences $\{z_n\}_n, \{w_n\}_n\subset
\mathbb C^+$ which converge nontangentially to $\xi$ and $\zeta$,
respectively, we have
$$
\lim_{n\to\infty}(z_n-\xi)(w_n-\zeta)G_{\eta}(z_n,w_n)=
\eta(\{(\xi,\zeta)\}).
$$
In particular, under the hypothesis of our corollary, $\lim_{y\to0}(iy)^2G_{\eta_1\bboxplus\eta_2}(\xi+iy,\zeta+iy)
=(\eta_1\bboxplus\eta_2)(\{(\xi,\zeta)\})>0$, and so, for $y>0$ small enough, $G_{\eta_1\bboxplus\eta_2}(\xi+iy,\zeta+iy)\neq0$.
Applying Proposition \ref{Bound} with $z=\xi+iy,w=\zeta+iy$ and letting $y\downarrow0$ yields
$$
\left(
\frac{(\mu_1\boxplus\mu_2)(\{\xi\})(\nu_1\boxplus\nu_2)(\{\zeta\})}{\mu_j(\{\xi_j\})\nu_j(\{\zeta_j\})}
\right)^\frac12
\eta_j(\{(\xi_j,\zeta_j)\})\ge(\eta_1\bboxplus\eta_2)(\{
(\xi,\zeta)\})>0.
$$ 
We divide by $y^2$ in Equation \eqref{four} to conclude that
\begin{eqnarray*}
\lefteqn{\frac{1}{(\eta_1\bboxplus\eta_2)(\{
(\xi,\zeta)\})}+
\frac{1}{(\mu_1\boxplus\mu_2)(\{
\xi\})(\nu_1\boxplus\nu_2)(\{\zeta\})} }\\
& = & 
\lim_{y\downarrow0}\frac{1}{y^2G_{\eta_1\bboxplus
\eta_2}(\xi+iy,\zeta+iy)}+\frac{1}{yG_{\mu_1\boxplus\mu_2}(\xi+iy)
yG_{\nu_1\boxplus\nu_2}(\zeta+iy)}\\
& = & \sum_{j=1}^2\left[\lim_{y\downarrow0}
\frac{\Im\omega_{a_j}(\xi+iy)\Im\omega_{b_j}(\zeta+iy)}{y^2}\right.\\
& & \left.\mbox{}\times\lim_{y\downarrow0}
\frac{1}{\Im\omega_{a_j}(\xi+iy)\Im\omega_{b_j}(\zeta+iy)G_{\eta_j}
(\omega_{a_j}(\xi+iy),\omega_{b_j}(\zeta+iy))}\right]\\
& = & 
\frac{1}{(\mu_1\boxplus\mu_2)(\{\xi\})(\nu_1\boxplus\nu_2)(\{\zeta)\})}
\left(
\frac{\mu_1(\{\xi_1\})\nu_1(\{\zeta_1\})}{\eta_1(\{(\xi_1,\zeta_1)\})}+
\frac{\mu_2(\{\xi_2\})\nu_2(\{\zeta_2\})}{\eta_2(\{(\xi_2,\zeta_2)\})}
\right).
\end{eqnarray*}
\end{proof}

\begin{remark} As an immediate consequence, \cite[Corollary 7.5]{bv} holds for probability measures on 
$\mathbb R^2$ and bi-free convolution: that is, for any compactly supported Borel probability measure
$\eta$ on $\mathbb R^2$, $\eta\bboxplus\eta$  has at most one atom, and if $\eta$ is $\bboxplus$-infinitely 
divisible (see \cite{GHM}), then it has at most one atom.
\end{remark}

Significantly stronger results are known for free additive convolution:
no singular continuous part is present in the Lebesgue decomposition 
of the free convolution of two probability measures whose supports contain
more than one point, and its absolutely continuous part has a density
which is continuous wherever finite (see \cite{B-Reg}). For compactly
supported measures, it has been shown in \cite{B-RR} that the Cauchy
transform of the free convolution of two probability measures $\mu$ and $\nu$
is unbounded if and only if $\mu(\{\alpha\})+\nu(\{\beta\})\ge1$ for some
$\alpha,\beta\in\mathbb R$. Thus, only atoms of $\mu$ and $\nu$ can make the density
of $\mu\boxplus\nu$ unbounded. Of course, a similar result cannot 
be expected to hold for bi-free convolutions (indeed, there are 
examples of central limits which are not absolutely continuous with 
respect to the Lebesgue measure on $\mathbb R^2$ - see \cite{BiFree1}). 
However, Corollary \ref{nucular} can be significantly (and easily) improved
in this direction, using the  tools provided by Proposition
\ref{Bound}.

\begin{corollary}
Assume that $(a_1,b_1)$ and $(a_2,b_2)$ are bi-free bi-partite 
self-adjoint two-faced pairs, neither of $a_1,b_1,a_2,b_2$ being a multiple of the identity. 
We denote by $\eta_j$ the distribution
of $(a_j,b_j)$ and by $\mu_j$ and $\nu_j$ its first and second 
marginal, respectively, $j=1,2$. Assume that there exist sequences $\{z_n\}_{n\in\mathbb N},
\{w_n\}_{n\in\mathbb N}\subset\mathbb C^+$ such that 
\begin{equation}
\lim_{n\to\infty}\sqrt{\Im z_n\Im w_n}|G_{\eta_1\bboxplus\eta_2}(z_n,w_n)|=+\infty.
\end{equation}
Then at least one of the two marginals of $\eta_1\bboxplus\eta_2$
has an unbounded Cauchy transform. In particular, there exists
a pair $\alpha,\beta\in\mathbb R$ such that either
$\mu_1(\{\alpha\})+\mu_2(\{\beta\})\ge1$, or
$\nu_1(\{\alpha\})+\nu_2(\{\beta\})\ge1$.
\end{corollary}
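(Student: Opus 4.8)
The plan is to combine Proposition \ref{Bound} with the unboundedness of $G_{\eta_1\bboxplus\eta_2}$ along $\{(z_n,w_n)\}$, and then invoke the free-probability result of \cite{B-RR} on the two marginals. From the second inequality of Proposition \ref{Bound},
$$
\Im\omega_{a_j}(z_n)\Im\omega_{b_j}(w_n)\,|G_{\eta_j}(\omega_{a_j}(z_n),\omega_{b_j}(w_n))|^2\ge\Im z_n\Im w_n\,|G_{\eta_1\bboxplus\eta_2}(z_n,w_n)|^2,
$$
so the left-hand side tends to $+\infty$. Since $a_j,b_j$ are bounded self-adjoint, $|G_{\eta_j}|$ is bounded away from the real axis by a constant depending only on $\|a_j\|,\|b_j\|$ whenever the arguments stay in $\mathbb C^+$; hence it is the product $\Im\omega_{a_j}(z_n)\Im\omega_{b_j}(w_n)$ that must be unbounded. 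First I would therefore conclude that, for at least one $j\in\{1,2\}$, the sequence $\big(\Im\omega_{a_j}(z_n)\Im\omega_{b_j}(w_n)\big)_n$ is unbounded.

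Next I would apply the first inequality of Proposition \ref{Bound} to the pair $(a_j,b_j)$ at the points $\omega_{a_j}(z_n)\in\mathbb C^+$ and $\omega_{b_j}(w_n)\in\mathbb C^+$, giving
$$
\Im G_{a_j}(\omega_{a_j}(z_n))\,\Im G_{b_j}(\omega_{b_j}(w_n))\ge\Im\omega_{a_j}(z_n)\,\Im\omega_{b_j}(w_n)\,|G_{\eta_j}(\omega_{a_j}(z_n),\omega_{b_j}(w_n))|^2.
$$
Again $|G_{\eta_j}|$ is bounded below by a positive constant, so the right-hand side is unbounded, and therefore $\Im G_{a_j}(\omega_{a_j}(z_n))\,\Im G_{b_j}(\omega_{b_j}(w_n))$ is unbounded in $n$. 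Consequently at least one of the one-variable Cauchy transforms $G_{\mu_1\boxplus\mu_2}=G_{\mu_j}\circ\omega_{a_j}$ or $G_{\nu_1\boxplus\nu_2}=G_{\nu_j}\circ\omega_{b_j}$ (by Equation \eqref{subord}) is unbounded on $\mathbb C^+$, i.e.\ one of the two marginals of $\eta_1\bboxplus\eta_2$ has unbounded Cauchy transform.

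Finally, since neither of $a_1,b_1,a_2,b_2$ is a scalar multiple of the identity, each of $\mu_1,\mu_2,\nu_1,\nu_2$ has support containing more than one point, so $\mu_1\boxplus\mu_2$ and $\nu_1\boxplus\nu_2$ are genuine free additive convolutions of nondegenerate compactly supported probability measures; the characterization in \cite{B-RR} then says that unboundedness of $G_{\mu_1\boxplus\mu_2}$ forces $\mu_1(\{\alpha\})+\mu_2(\{\beta\})\ge1$ for some $\alpha,\beta\in\mathbb R$, and similarly for the $\nu$'s, which is exactly the claimed conclusion.

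The main obstacle is the bookkeeping in the first paragraph: one must be careful that it is genuinely $\Im\omega_{a_j}(z_n)\Im\omega_{b_j}(w_n)$, and not $|G_{\eta_j}|$, that blows up. This is where one uses that $(a_j,b_j)$ are bounded operators, so that $G_{\eta_j}=G_{(a_j,b_j)}$ is a bounded analytic function on $\mathbb C^+\times\mathbb C^+$ (its modulus is at most $(\operatorname{dist}((\omega_{a_j}(z_n),\omega_{b_j}(w_n)),\operatorname{supp}\eta_j))^{-2}$, and more simply it is uniformly bounded since $\eta_j$ is a probability measure — but one only needs a lower bound away from $0$ is \emph{not} required here; rather one needs the upper bound $|G_{\eta_j}|\le C_j$). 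With that upper bound in hand, the unbounded factor in Proposition \ref{Bound}'s second inequality can only be $\Im\omega_{a_j}\Im\omega_{b_j}$, and the rest of the argument is then a direct chain of the two inequalities together with \eqref{subord} and \cite{B-RR}.
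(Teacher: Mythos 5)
Your two displayed inequalities, chained together, are exactly the paper's argument: composing them and using $G_{\mu_j}\circ\omega_{a_j}=G_{\mu_1\boxplus\mu_2}$, $G_{\nu_j}\circ\omega_{b_j}=G_{\nu_1\boxplus\nu_2}$ gives
$$
\Im G_{\mu_1\boxplus\mu_2}(z_n)\,\Im G_{\nu_1\boxplus\nu_2}(w_n)\ \ge\ \Im z_n\,\Im w_n\,|G_{\eta_1\bboxplus\eta_2}(z_n,w_n)|^2\longrightarrow+\infty,
$$
after which at least one factor on the left is unbounded and \cite[Theorem 7]{B-RR} (together with the nondegeneracy hypothesis, which you correctly invoke) finishes. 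The paper obtains this displayed inequality in one step, by applying the first inequality of Proposition \ref{Bound} directly to the two-faced pair $(a_1+a_2,b_1+b_2)$; your route through the second inequality and then the first inequality at the subordinated points is the same estimate unwound.

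However, both auxiliary claims you insert between the two inequalities are false and should be deleted. The two-variable Cauchy transform $G_{\eta_j}$ is \emph{not} uniformly bounded above on $\mathbb C^+\times\mathbb C^+$: one only has $|G_{\eta_j}(z,w)|\le 1/(\Im z\,\Im w)$, which blows up as the arguments approach $\mathrm{supp}(\eta_j)$ (e.g. $G_{\delta_{(0,0)}}(iy,iy)=-y^{-2}$), and the points $\omega_{a_j}(z_n),\omega_{b_j}(w_n)$ may well approach the real line; so your Step 1 conclusion that $\Im\omega_{a_j}(z_n)\,\Im\omega_{b_j}(w_n)$ by itself is unbounded is not justified. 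Likewise $|G_{\eta_j}|$ is certainly not bounded below by a positive constant on $\mathbb C^+\times\mathbb C^+$: it tends to $0$ at infinity and can even vanish there, as the discussion of zero sets in Section \ref{at} of the paper makes clear; so the premise of your Step 2 is also false. Fortunately neither claim is needed: the right-hand side of your second displayed inequality is literally the left-hand side of your first, so the divergence passes through without ever isolating which factor blows up. With those two sentences removed, your proof is correct and coincides with the paper's.
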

Note that this result provides a large class of probability
measures in $\mathbb R^2$ which cannot arise as non-trivial 
bi-free convolutions.

\begin{proof}
The proof is  straightforward: recall from Proposition \ref{Bound}
that 
\begin{eqnarray*}
\sqrt{\Im G_{\mu_1\boxplus\mu_2}(z_n)\Im G_{\nu_1\boxplus\nu_2}(w_n)} 
& \ge & \sqrt{\Im z_n\Im w_n}|G_{\eta_1\bboxplus\eta_2}(z_n,w_n)|.
\end{eqnarray*}
Thus, at least one of $\{G_{\mu_1\boxplus\mu_2}(z_n)\}_{n\in\mathbb N},\{G_{\nu_1\boxplus\nu_2}(w_n)\}_{n\in\mathbb N}$
is unbounded. An application of \cite[Theorem 7]{B-RR} concludes the proof.
\end{proof}

\begin{corollary}
With the notations, and under the hypotheses, of the previous corollary, 
assume that there exist $u,x\in\mathbb R$ and $\alpha>1/2$ such that 
$$
\lim_{(y_1,y_2)\to(0,0)}(y_1y_2)^\alpha|G_{\eta_1\bboxplus\eta_2}(x+iy_1,u+iy_2)|=+\infty.
$$
Then there exist pairs $\alpha_1,\alpha_2,\beta_1,\beta_2\in\mathbb R$ such that $\alpha_1+\beta_1=x,\alpha_2+\beta_2=u$, and  
$\mu_1(\{\alpha_1\})+\mu_2(\{\beta_1\})\ge1$, $\nu_1(\{\alpha_2\})+\nu_2(\{\beta_2\})\ge1$.
\end{corollary}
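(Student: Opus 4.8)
The plan is to argue by contradiction and to reduce the statement to the unboundedness of $G_{\mu_1\boxplus\mu_2}$ in a neighbourhood of $x$ and of $G_{\nu_1\boxplus\nu_2}$ in a neighbourhood of $u$, after which \cite[Theorem 7]{B-RR} (in the localized form already used in the proof of Corollary \ref{nucular}, via the subordination description of atoms from \cite{bv}) produces the desired $\alpha_1,\beta_1$ with $\alpha_1+\beta_1=x$ and $\alpha_2,\beta_2$ with $\alpha_2+\beta_2=u$. So suppose the conclusion fails; without loss of generality assume $\mu_1(\{\alpha_1\})+\mu_2(\{\beta_1\})<1$ for all $\alpha_1,\beta_1\in\mathbb R$ with $\alpha_1+\beta_1=x$. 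By \cite[Theorem 7]{B-RR} the Cauchy transform $G_{\mu_1\boxplus\mu_2}$ is then bounded on some truncated neighbourhood $(x-\varepsilon,x+\varepsilon)+i(0,\varepsilon)$ of $x$, say by a constant $C$.

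I would then feed this into Proposition \ref{Bound}. Applied to the pair $(a_1+a_2,b_1+b_2)$, whose distribution is $\eta_1\bboxplus\eta_2$ with marginals $\mu_1\boxplus\mu_2$ and $\nu_1\boxplus\nu_2$, its first inequality gives $|\Im G_{\mu_1\boxplus\mu_2}(z)|\,|\Im G_{\nu_1\boxplus\nu_2}(w)|\ge\Im z\,\Im w\,|G_{\eta_1\bboxplus\eta_2}(z,w)|^2$ for $z,w\in\mathbb C^+$. Combining the bound $|\Im G_{\mu_1\boxplus\mu_2}(x+iy_1)|\le C$, valid for $0<y_1<\varepsilon$, with the elementary inequality $|\Im G_{\nu_1\boxplus\nu_2}(u+iy_2)|\le 1/y_2$, valid for the Cauchy transform of any probability measure, one obtains
\[
(y_1y_2)^{\alpha}\,\bigl|G_{\eta_1\bboxplus\eta_2}(x+iy_1,u+iy_2)\bigr|\;\le\;\sqrt{C}\;y_1^{\alpha-1/2}\,y_2^{\alpha-1},\qquad 0<y_1<\varepsilon,\ y_2>0.
\]

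To reach a contradiction I would now approach $(0,0)$ along a carefully chosen curve. Since $\alpha>\tfrac12$ one can pick $s>0$ with $(\alpha-\tfrac12)+s(\alpha-1)\ge0$: any $s>0$ works when $\alpha\ge1$ (in fact, for $\alpha\ge1$ even the universal estimate $|G_{\eta_1\bboxplus\eta_2}(x+iy_1,u+iy_2)|\le 1/(y_1y_2)$ makes the hypothesis impossible, so the statement is vacuous there), and $s=(\alpha-\tfrac12)/(1-\alpha)$ works when $\tfrac12<\alpha<1$. Along $y_1=t,\ y_2=t^{s}$ with $t\downarrow0$, the right-hand side of the displayed estimate stays bounded by $\sqrt{C}$, whereas the hypothesis forces the left-hand side to tend to $+\infty$ along every path approaching $(0,0)$, in particular along this one — a contradiction. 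Hence $\mu_1(\{\alpha_1\})+\mu_2(\{\beta_1\})\ge1$ for some $\alpha_1,\beta_1$ with $\alpha_1+\beta_1=x$; exchanging the roles of the two coordinates (and of $x$ and $u$, and of $y_1$ and $y_2$) yields in the same way $\nu_1(\{\alpha_2\})+\nu_2(\{\beta_2\})\ge1$ for some $\alpha_2,\beta_2$ with $\alpha_2+\beta_2=u$.

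The one delicate point, which I would handle exactly as in the proof of Corollary \ref{nucular}, is the passage from ``the Cauchy transform of a free convolution is unbounded near a prescribed real point'' to ``the summands have atoms located at a prescribed sum'': this is the content of \cite[Theorem 7]{B-RR} together with the subordination-function analysis of \cite{bv}, and everything else is the elementary two-variable estimate above combined with the observation that the hypothesis, being an unrestricted limit, is falsified by exhibiting a single path to $(0,0)$ along which the quantity remains bounded.
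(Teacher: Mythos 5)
Your proof is correct and follows essentially the same route as the paper's: both rest on the first inequality of Proposition \ref{Bound} applied to the pair $(a_1+a_2,b_1+b_2)$ together with the localized form of Theorem 7 of \cite{B-RR}, you merely running the argument in the contrapositive and exhibiting the explicit path $y_1=t$, $y_2=t^{(\alpha-1/2)/(1-\alpha)}$ where the paper appeals tersely to the independence of the two coordinates in the product limit. Your remark that the statement is vacuous for $\alpha\ge 1$ (via the universal bound $|G_{\eta_1\bboxplus\eta_2}(z,w)|\le(\Im z\,\Im w)^{-1}$) is a correct additional observation not made in the paper.
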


\begin{proof}
As seen above, we have
\begin{eqnarray*}
\lefteqn{
(y_1y_2)^{\alpha-\frac12}\left(\Im G_{\mu_1\boxplus\mu_2}(x+iy_1)\Im G_{\nu_1\boxplus\nu_2}(u+iy_2)\right)^{\frac12}}
\quad\quad\quad\quad\quad\quad\quad\quad\quad\quad\quad\quad\quad\quad\\ 
& \ge & (y_1y_2)^\alpha|G_{\eta_1\bboxplus\eta_2}(x+iy_1,u+iy_2)|.
\end{eqnarray*}
Taking limit as $(y_1,y_2)\to(0,0)$ in this inequality yields
$$
\lim_{(y_1,y_2)\to(0,0)}(y_1y_2)^{2\alpha-1}\Im G_{\mu_1\boxplus\mu_2}(x+iy_1)\Im G_{\nu_1\boxplus\nu_2}(u+iy_2)=+\infty
$$
Since the two coordinates $y_1,y_2$ in the above limit are independent of each other, we conclude that
there exists a $t\in(0,2\alpha-1]$ such that the sets 
$\{y^t\Im G_{\mu_1\boxplus\mu_2}(x+iy)\colon y>0\}$ and
$\{y^t\Im G_{\nu_1\boxplus\nu_2}(x+iy)\colon y>0\}$ are both unbounded.
An application of the same \cite[Theorem 7]{B-RR} allows us to conclude.
\end{proof}

We conclude this section with a simple remark generalizing
the linearization result from \cite{BMS} to bi-free bi-partite
self-adjoint random variables.

\begin{prop}
Assume that $(a_1,b_1),(a_2,b_2)\in\mathcal A^2$
self-adjoint random variables in the C${}^*$-noncommutative probability 
space $(\mathcal A,\varphi)$ which are bi-free with respect to 
$\varphi$ and satisfy $a_jb_j=b_ja_j$, $j=1,2$. 
Consider self-adjoint polynomials $p,q$ in two noncommuting
indeterminates. Then there exist $m\in\mathbb N$, $\alpha_j,\beta_j,
\gamma_j\in M_{m+1}(\mathbb C)$ which are self-adjoint such that
\begin{eqnarray*}
\lefteqn{\varphi\left((z-p(a_1,a_2))^{-1}(w-q(b_1,b_2))^{-1}\right)=}\\
& & [G_{(a_1\otimes\alpha_1+a_2\otimes\alpha_2,b_1\otimes\beta_1+
b_2\otimes\beta_2)}(ze_{1,1}+\gamma_1,-e_{1,1},we_{1,1}+\gamma_2)]_{1,m+2}.
\end{eqnarray*}
Moreover, $G_{(a_1\otimes\alpha_1+a_2\otimes\alpha_2,b_1\otimes\beta_1+
b_2\otimes\beta_2)}$ can be computed via the analytic subordination 
functions provided by Theorem {\rm \ref{Main}.}
\end{prop}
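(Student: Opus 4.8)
The plan is to reduce the bi-variate resolvent on the left to a single entry of a matrix-valued two-variable Cauchy transform, and then feed that matrix-valued object into the bi-free analytic subordination machinery established in Theorem \ref{Main} and Equation \eqref{seven}. First I would recall the classical (scalar, one-variable) linearization trick of Haagerup--Thorbj\o rnsen as used in \cite{BMS}: given a self-adjoint polynomial $p$ in noncommuting indeterminates $x_1, x_2$, there exist $m \in \mathbb{N}$, self-adjoint matrices $\alpha_0, \alpha_1, \alpha_2 \in M_{m+1}(\mathbb{C})$ with $\alpha_0$ invertible on a complementary block, such that $z \mapsto (z - p(a_1,a_2))^{-1}$ is recovered as the $(1,1)$-entry of $(z e_{1,1} + \gamma_1 - a_1 \otimes \alpha_1 - a_2 \otimes \alpha_2)^{-1}$, using a Schur complement identity; one does the analogous thing for $q$ with matrices $\beta_1, \beta_2$ and shift $\gamma_2$. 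The point is that $p(a_1,a_2)$ and $q(b_1,b_2)$ commute (since each $a_j$ commutes with each $b_j$ by bi-partiteness and left/right variables of distinct pairs commute by bi-freeness), so the product of the two resolvents makes sense and can be read off as a single matrix entry, here the $(1, m+2)$ entry of the block $2 \times 2$ (i.e.\ $(2m+2) \times (2m+2)$) system.

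Next I would observe that $a_1 \otimes \alpha_1 + a_2 \otimes \alpha_2$ is, as a left variable, a bi-free sum: by Lemma \ref{Matr-Bi-Free}, since $a_j \otimes \alpha_j \in M_{m+1}(\mathbb{C}[a_j])$ and $b_j \otimes \beta_j \in M_{m+1}(\mathbb{C}[b_j])$, the pairs $(L(a_1 \otimes \alpha_1), R(b_1 \otimes \beta_1))$ and $(L(a_2 \otimes \alpha_2), R(b_2 \otimes \beta_2))$ are bi-free over $M_{m+1}(\mathbb{C})$ with respect to the amalgamated conditional expectation. This places us exactly in the hypotheses of Theorem \ref{Main} (applied with $B = M_{m+1}(\mathbb{C})$, which is a $C^*$-algebra, and with the two-faced pairs being the matrix-amplified variables), so the operator-valued Cauchy transform $G_{(a_1 \otimes \alpha_1 + a_2 \otimes \alpha_2,\, b_1 \otimes \beta_1 + b_2 \otimes \beta_2)}$ evaluated at an upper-triangular argument $(z e_{1,1} + \gamma_1, -e_{1,1}, w e_{1,1} + \gamma_2)$ is computable through the $M_{m+1}(\mathbb{C})$-valued subordination functions $\omega_{a_1 \otimes \alpha_1}$, $\omega_{b_1 \otimes \beta_1}$ and the operator-valued formula for $\Pi_1$ in Theorem \ref{Main}. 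This gives the ``moreover'' clause essentially for free.

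The main step, then, is matching the scalar two-variable resolvent on the left with the prescribed matrix entry on the right. I would argue this by the Schur complement / linearization identity applied twice: expand $(z - p(a_1,a_2))^{-1}(w - q(b_1,b_2))^{-1}$ using the block-matrix inversion formula for the combined linearization pencil, with the $p$-block occupying indices $\{1,\dots,m+1\}$ and the $q$-block indices $\{m+2, \dots, 2m+2\}$, the off-diagonal coupling supplied precisely by the $-e_{1,1}$ in the ``$c$-slot'' of $G_{(X,Y)}$ (recall from Section \ref{back} that the middle argument of $G_{(X,Y)}(b,c,d)$ is exactly the linear coupling between the left resolvent and the right resolvent, coming from $M_2(\varphi)$ of $(z-a)^{-1}\zeta(w-b)^{-1}$-type expressions). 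Tracking how the Schur complements compose, the $(1, m+2)$-entry of the inverse of the full pencil equals $[(z-p(a_1,a_2))^{-1}]_{1,1}$ times the coupling times $[(w-q(b_1,b_2))^{-1}]_{1,1}$, which is $\varphi((z-p)^{-1}(w-q)^{-1})$ after applying $\varphi \otimes \mathrm{Id}$. The bookkeeping here — choosing $m$, $\alpha_j$, $\beta_j$, $\gamma_1$, $\gamma_2$ correctly and verifying the self-adjointness and the exact index at which the answer appears — is the only real obstacle, and it is routine given \cite{BMS}; I would handle it by citing the standard linearization and spelling out only the block structure of the $2 \times 2$ arrangement and the role of the $-e_{1,1}$ coupling, rather than rederiving the Haagerup--Thorbj\o rnsen construction from scratch.
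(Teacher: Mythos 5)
Your proposal follows essentially the same route as the paper: the Anderson-type self-adjoint linearization from \cite{A} and \cite{BMS}, the block $2\times2$ pencil with the $-e_{1,1}$ coupling whose inverse has $(z-p)^{-1}(w-q)^{-1}$ in the $(1,m+2)$ slot, and then Lemma \ref{Matr-Bi-Free} together with Remark \ref{rem2}/Theorem \ref{Main} for the subordination claim. The only detail you fold into ``bookkeeping'' that the paper spells out is how to arrange a common size $m$ for the two linearizations (by padding the smaller one with an identity block), which is indeed routine.
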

\begin{proof}
As shown in \cite{A} (see also \cite[Section 3]{BMS}), 
for any self-adjoint polynomial in two noncommuting self-adjoint indeterminates
$p\in\mathbb C\langle X_1,X_2\rangle$, one can find 
$m_p\in\mathbb N$ and a self-adjoint matrix $L_p=\begin{bmatrix}
0 & -u_p^*\\
-u_p & -Q_p
\end{bmatrix}\in M_{m_p+1}(\mathbb C\langle X_1,X_2\rangle)$
such that 
\begin{enumerate}
\item each entry of $L_p$ is of degree less than or equal to one, 
\item $u_p\in M_{m_p\times 1}(\mathbb C\langle X_1,X_2\rangle)$,
\item $Q_p\in M_{m_p}(\mathbb C\langle X_1,X_2\rangle)$ is invertible with
$Q_p^{-1}\in M_{m_p}(\mathbb C\langle X_1,X_2\rangle)$, and
\item $\left[(ze_{1,1}-L_p)^{-1}\right]_{1,1}=\left(\begin{bmatrix}
z & u_p^*\\
u_p & Q_p
\end{bmatrix}^{-1}\right)_{1,1}=(z-p)^{-1}$, that is, $p=u_p^*Q_p^{-1}u_p$.
\end{enumerate}
Clearly, such an $L_p$ needs not be unique. 

Choose now such a matrix $L_p\in M_{m_p+1}(\mathbb C\langle X_1,X_2\rangle)$,
and another matrix $L_q\in M_{m_q+1}(\mathbb C\langle X_1,X_2\rangle)$ satisfying the
same properties. We would like to evaluate $L_p$ in $X_1=a_1$, $X_2=a_2$ and
$L_q$ in $X_1=b_1,$ $X_2=b_2$ and apply Lemma \ref{Matr-Bi-Free} and 
Remark \ref{rem2}. In order to be able to do that, we need that $m_p=m_q$. Unfortunately there is no 
apriori reason for that to happen. Assume without loss of generality that $m_p<m_q$.
We show next that we can modify $L_p$ such that it still satisfies items (1)--(4) above,
but with $m_p$ replaced by $m_p+r$ for any $r\in\mathbb N$.  
Indeed, if $L_p$ satisfies (1)--(4) above, then
$$
\left(ze_{1,1}-\begin{bmatrix}
L_p & 0_{m_p\times r}\\
0_{r\times m_p} & 1_{r\times r}
\end{bmatrix}\right)^{-1}=\begin{bmatrix}
(ze_{1,1}-L_p)^{-1} & 0_{m_p\times r}\\
0_{r\times m_p} & -1_{r\times r}
\end{bmatrix}.
$$
Thus, if our first choice of $L_p$ and $L_q$ have different sizes, 
we complete the smaller one with an identity matrix of the 
desired size in the lower right corner (and zero elsewhere) 
in order to make them of equal size $m+1$. Then
\begin{eqnarray*}
\lefteqn{\begin{bmatrix}
z & u_p^* & -1 & 0_{1\times m}\\
u_p & Q_p & 0_{m\times1}& 0_{m\times m}\\
0 & 0_{1\times m} & w & u_q^*\\
0_{m\times1} & 0_{m\times m} & u_q & Q_q
\end{bmatrix}^{-1}}\quad\quad\quad\quad\quad\quad\\
& = & \begin{bmatrix}
(z-p)^{-1} & \star & (z-p)^{-1}(w-q)^{-1} & \star\\
\star &\star & \star & \star\\
0 & 0_{1\times m} & (w-q)^{-1} & \star\\
0_{m\times1} & 0_{m\times m} &\star & \star
\end{bmatrix}
\end{eqnarray*}
We evaluate $u_p,Q_p$ in $a_1$ and $a_2$, $u_q,Q_q$ in $b_1$ and $b_2$,
and apply $\varphi$. 
The proposition follows now easily from Lemma \ref{Matr-Bi-Free}
and Remark \ref{rem2}.
\end{proof}

\section{Bi-free convolution semigroups}\label{sec:semi}

One remarkable feature of free additive convolution 
is the existence of partially defined free convolution semigroups:
for any Borel probability measure $\mu$ on $\mathbb R$,
there exists a family $(\mu_t)_{t\ge1}$ of Borel
probability measures on $\mathbb R$ such that $\mu_1
=\mu$ and $\mu_{s+t}=\mu_s\boxplus\mu_t$ for all 
$s,t\ge1$. This phenomenon was first noted in 
\cite{BV} for $t$ large enough, and proved in \cite{NS}
for all $t\ge1$. In \cite{BB}, an analytic subordination 
formula for $\mu_t$ to $\mu_1$ is provided: for any $t\ge1$
and $z\in\mathbb C^+$, there exists 
$\omega_\mu(t,z)\in\mathbb C^+$ such that
$$
\omega_\mu(t,z)=\frac{z}{t}+\left(1-\frac1t\right)\frac{1}{G_{\mu}(\omega_\mu(t,z))}.
$$
Moreover, $G_{\mu}(\omega_\mu(t,z))=G_{\mu_t}(z)$, 
$z\in\mathbb C$, and the correspondence 
$z\mapsto\omega_\mu(t,z)$ is analytic on $\mathbb C^+$.
It is easy to see that the above equation uniquely
determines $\omega_\mu(t,z)$, and hence $\mu_t$.
The paper \cite{NS} provides also an operatorial 
construction of $\mu_t$: if $a=a^*$ in some 
${}^*$-noncommutative probability space $(\mathcal A,\varphi)$ has 
distribution $\mu$ with respect to $\varphi$ and $p=p^*=p^2$
is a projection which is free from $a$ and satisfies 
$\varphi(p)=1/t$, then the distribution of $pap$ in
the reduced algebra $(p\mathcal Ap,\frac{1}{\varphi(p)}
\varphi(p\cdot p))$ is $\mu_t$. Using this construction,
one of us has generalized, together with Huang and Mingo,
the result of Nica and Speicher to bi-free additive convolution.

More precisely, it has been shown in \cite[Theorem 5.3]{GHM} 
that for any compactly supported Borel probability measure 
$\eta$ on $\mathbb R^2$, there exists a partially defined 
bi-free convolution semigroup $(\eta_t)_{t\ge1}$ satisfying the
conditions $\eta_1=\eta$ and $\eta_{s+t}=\eta_s\bboxplus
\eta_t$ for all $s,t\ge1$. As expected, we have
$R_{\eta_t}(z,w)=tR_\eta(z,w)$, $t\ge1$. 
The partial semigroup $(\eta_t)_{t\ge1}$ extends to a full
weakly continuous semigroup $[0,+\infty)\ni t\mapsto\eta_t$
with $\eta_0=\delta_{(0,0)}$ if and only if 
$\eta$ is bi-freely infinitely divisible 
(see \cite[Theorem 4.2]{GHM}). 

Consider a $C^*$-noncommutative probability space $(\mathcal A,\varphi)$ and let $(a_1,b_1)\in\mathcal A^2$
be a bi-partite two-faced pair of self-adjoint random variables 
whose joint distribution with respect to $\varphi$ is $\eta$. For
any $t\ge1$, let $(a_t,b_t)\in\mathcal A^2$ be a two-faced pair 
of noncommutative random variables such that $a_tb_t=b_ta_t,
a_t=a_t^*$, and $b_t=b_t^*$, and the distribution of $(a_t,b_t)$
with respect to $\varphi$ equals $\eta_t$. Denote by $\mu_t$ the 
distribution of $a_t$ and by $\nu_t$ the distribution of $b_t$ with respect to $\varphi$.
Define $X_t=\begin{bmatrix}
a_t & 0 \\
0 & b_t
\end{bmatrix}\in M_2(\mathcal A)$. As seen in Lemma \ref{lem1},
we have
\begin{eqnarray*}
R_{X_t}\left(\begin{bmatrix}
z & \zeta \\
0 & w
\end{bmatrix}\right) & = & \begin{bmatrix}
R_{a_t}(z) & z^{-1}\zeta w^{-1}(R_{(a_t,b_t)}(z,w)-zR_{a_t}(z)-wR_{b_t}(w)) \\
0 & R_{b_t}(w)
\end{bmatrix}\\
& = & t\begin{bmatrix}
R_{a_1}(z) & z^{-1}\zeta w^{-1}(R_{(a_1,b_1)}(z,w)-zR_{a_1}(z)-wR_{b_1}(w)) \\
0 & R_{b_1}(w)
\end{bmatrix}\\
& = & tR_{X_1}\left(\begin{bmatrix}
z & \zeta \\
0 & w
\end{bmatrix}\right).
\end{eqnarray*}
As shown in \cite[Theorem 7.9]{ABFN}, for a given 
$X_1=X_1^*\in M_2(\mathcal A)$ and $t\ge1$, there exists 
an $\tilde{X}_t=\tilde{X}_t^*$ such that $R_{\tilde{X}_t}=
tR_{X_1}$. By restricting $R_{\tilde{X}_t}$ to the 
set of upper triangular matrices and applying Lemma \ref{lem1}, 
we see that $R_{\tilde{X}_t}\left(\begin{bmatrix}
z & \zeta \\
0 & w
\end{bmatrix}\right)=R_{X_t}\left(\begin{bmatrix}
z & \zeta \\
0 & w
\end{bmatrix}\right)$ for all $z,w,\zeta\in\mathbb C$ of
sufficiently small absolute value. In particular, it follows
that the subordination formula of \cite[Theorem 8.4]{ABFN}
holds  for $X_t$: there exists a function $\omega_{X_t}$ defined
on the set of elements $\begin{bmatrix}
z & \zeta \\
0 & w
\end{bmatrix}$ with strictly positive imaginary part which
satisfies the functional equation 
$$
\omega_{X_t}\left(\begin{bmatrix}
z & \zeta \\
0 & w
\end{bmatrix}\right)=\frac1t\begin{bmatrix}
z & \zeta \\
0 & w
\end{bmatrix}+\left(1-\frac1t\right)
G_{X_1}\left(\omega_{X_t}\left(\begin{bmatrix}
z & \zeta \\
0 & w
\end{bmatrix}\right)\right)^{-1},
$$
and $G_{X_1}\circ\omega_{X_t}=G_{X_t}$.
The point $\omega_{X_t}\left(\begin{bmatrix}
z & \zeta \\
0 & w
\end{bmatrix}\right)$ is the unique attracting
fixed point of the map
$v\mapsto\frac1t\begin{bmatrix}
z & \zeta \\
0 & w
\end{bmatrix}+\left(1-\frac1t\right)
G_{X_1}(v)^{-1}.$ Since this map sends
upper triangular matrices to upper triangular 
matrices, it follows that $\omega_{X_t}\left(\begin{bmatrix}
z & \zeta \\
0 & w
\end{bmatrix}\right)=\begin{bmatrix}
f_1(z,\zeta,w) & f_2(z,\zeta,w) \\
0 & f_3(z,\zeta,w)
\end{bmatrix}$ itself is upper triangular. Its
entries are easily determined by using the above-displayed
equation:
\begin{eqnarray*}
\lefteqn{\begin{bmatrix}
f_1(z,\zeta,w) & f_2(z,\zeta,w) \\
0 & f_3(z,\zeta,w)
\end{bmatrix}=\frac1t\begin{bmatrix}
z & \zeta \\
0 & w
\end{bmatrix}}\\
&  & \mbox{}+\left(1-\frac1t\right)\begin{bmatrix}
G_{a_1}(f_1(z,\zeta,w))^{-1} & \frac{\varphi\left((f_1(z,\zeta,w)-a_1)^{-1}f_2(z,\zeta,w)(f_3(z,\zeta,w)-b_1)^{-1}\right)}{G_{a_1}(f_1(z,\zeta,w))G_{b_1}(f_3(z,\zeta,w))} \\
0 & G_{b_1}(f_3(z,\zeta,w))^{-1}
\end{bmatrix}\\
& = & \begin{bmatrix}
\frac{z}{t}+\left(1-\frac1t\right)\frac{1}{G_{a_1}(f_1(z,\zeta,w))} & \frac1t\zeta+\left(1-\frac1t\right)\frac{f_2(z,\zeta,w)G_{(a_1,b_1)}(f_1(z,\zeta,w),f_3(z,\zeta,w))}{G_{a_1}(f_1(z,\zeta,w))G_{b_1}(f_3(z,\zeta,w))} \\
0 & \frac{w}{t}+\left(1-\frac1t\right)\frac{1}{G_{b_1}(f_3(z,\zeta,w))}
\end{bmatrix}.
\end{eqnarray*}
The equalities corresponding to entries 
$(1,1)$ and $(2,2)$ provide as indicated at
the beginning of this section, via \cite[Theorem 2.5]{BB},
that $f_1(z,\zeta,w)=\omega_{\mu_1}(t,z)$, $f_3(z,\zeta,w)
=\omega_{\nu_1}(t,w)$. The $(1,2)$ corner provides the relation
\begin{eqnarray*}
\lefteqn{f_2(z,\zeta,w)=\zeta\frac{G_{a_1}(\omega_{\mu_1}(t,z))G_{b_1}(\omega_{\nu_1}(t,w))}{
tG_{a_1}(\omega_{\mu_1}(t,z))G_{b_1}(\omega_{\nu_1}(t,w))+(1-t)
G_{(a_1,b_1)}(\omega_{\mu_1}(t,z),\omega_{\nu_1}(t,w))}}\\
& = & \zeta\frac{G_{a_t}(z)G_{b_t}(w)}{
tG_{a_t}(z)G_{b_t}(w)+(1-t)
G_{(a_1,b_1)}(\omega_{\mu_1}(t,z),\omega_{\nu_1}(t,w))}.
\quad\quad\quad\quad\quad\quad\quad\quad\quad\quad
\end{eqnarray*}
Thus, using $G_{X_1}\circ\omega_{X_t}=G_{X_t}$ we obtain a formula for the Cauchy transform 
of a measure in a partial bi-free additive convolution 
semigroup:
\begin{equation}\label{undici}
G_{(a_t,b_t)}(z,w)=\frac{1}{\frac{t}{G_{(a_1,b_1)}(\omega_\mu(t,z),\omega_\nu(t,w))}+\frac{1-t}{
G_{a_1}(\omega_\mu(t,z))G_{b_1}(\omega_\nu(t,w))}},\quad z,w\in\mathbb C^+.
\end{equation}
An analogue of Proposition \ref{Bound} now easily follows:

\begin{prop}\label{prop41}
Let $(\mathcal A,\varphi)$ be a $C^*$-noncommutative probability space. Assume
that for any $t\ge1$, there is a commuting self-adjoint two-faced pair 
$(a_t,b_t)\in\mathcal A^2$ of noncommutative random variables such that the distribution of $(a_t,b_t)$
with respect to $\varphi$ equals $\eta_t$, and $\eta_{s+t}=
\eta_s\bboxplus\eta_t$, $s,t\ge1$. Denote $\mu_t$ the 
distribution of $a_t$ and $\nu_t$ the distribution of $b_t$.
Then
$$
\Im \omega_\mu(t,z)\Im\omega_\nu(t,w)|G_{(a_1,b_1)}(\omega_\mu(t,z),\omega_\nu(t,w))|^2\ge
\Im z\Im w|G_{(a_t,b_t)}(z,w)|^2, 
$$
for all $z,w\in\mathbb C^+.$
\end{prop}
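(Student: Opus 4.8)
The plan is to repeat verbatim the ``simple trick'' used in the proof of Proposition~\ref{Bound}, now applied to the subordination function $\omega_{X_t}$ of the semigroup element $X_t=\begin{bmatrix}a_t&0\\0&b_t\end{bmatrix}$ rather than to the subordination function $\omega_{X_j}$ of a two-summand convolution. Concretely, I would start from the identity, established in the computations preceding the statement, that for $z,w\in\mathbb C^+$ and $\zeta\in\mathbb C$,
$$
\omega_{X_t}\left(\begin{bmatrix} z & \zeta\\ 0 & w\end{bmatrix}\right)=\begin{bmatrix} \omega_\mu(t,z) & f_2(z,\zeta,w)\\ 0 & \omega_\nu(t,w)\end{bmatrix},
$$
where $f_2(z,\zeta,w)=\zeta\,G_{a_t}(z)G_{b_t}(w)\big/\big(tG_{a_t}(z)G_{b_t}(w)+(1-t)G_{(a_1,b_1)}(\omega_\mu(t,z),\omega_\nu(t,w))\big)$. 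Using $G_{X_1}\circ\omega_{X_t}=G_{X_t}$, the $(1,2)$ entry of $G_{X_t}$ evaluated at the upper triangular matrix can be written in terms of $f_2$ and $G_{(a_1,b_1)}$, and combining this with the formula \eqref{undici} for $G_{(a_t,b_t)}$ shows that, exactly as in Remark~\ref{rem2}, one may also express
$$
f_2(z,\zeta,w)=\frac{\zeta\,G_{(a_t,b_t)}(z,w)}{G_{(a_1,b_1)}(\omega_\mu(t,z),\omega_\nu(t,w))}.
$$

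Next I would invoke the half-plane behaviour of subordination functions. Since $R_{\tilde X_t}=tR_{X_1}$ for the operator-valued variable $\tilde X_t$ and, by Lemma~\ref{lem1}, the restriction of $R_{\tilde X_t}$ to upper triangular matrices agrees with $R_{X_t}$, the subordination map $\omega_{X_t}$ coming from \cite[Theorem 8.4]{ABFN} sends matrices with strictly positive imaginary part to matrices with strictly positive imaginary part. Now recall the elementary fact (used already in Proposition~\ref{Bound}, a particular case of \cite[Proposition 3.1]{B}) that for $2\times2$ matrices one has $\Im\begin{bmatrix} z & \zeta\\ 0 & w\end{bmatrix}>0$ precisely when $\Im z>0$, $\Im w>0$ and $4\Im z\,\Im w>|\zeta|^2$. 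Applying this to $\omega_{X_t}$ of the matrix above gives, whenever $4\Im z\,\Im w>|\zeta|^2$,
$$
4\,\Im\omega_\mu(t,z)\,\Im\omega_\nu(t,w)>|f_2(z,\zeta,w)|^2=\frac{|\zeta|^2\,|G_{(a_t,b_t)}(z,w)|^2}{|G_{(a_1,b_1)}(\omega_\mu(t,z),\omega_\nu(t,w))|^2}.
$$
Letting $|\zeta|\uparrow 2\sqrt{\Im z\,\Im w}$ and rearranging yields exactly the claimed inequality.

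The only genuine subtlety — and hence the ``hard part'' — is bookkeeping around the zeros: the quotient defining $f_2$ is only literally meaningful where $G_{(a_1,b_1)}(\omega_\mu(t,z),\omega_\nu(t,w))\neq 0$. However, $\omega_{X_t}$ itself is honestly defined and analytic on all upper triangular matrices with positive imaginary part, so $f_2$ is analytic there regardless; the identity $f_2\cdot G_{(a_1,b_1)}(\omega_\mu(t,z),\omega_\nu(t,w))=\zeta\,G_{(a_t,b_t)}(z,w)$ holds as an identity of analytic functions (first for $|z|,|w|$ large, then by analytic continuation, exactly as in the argument following Remark~\ref{rem2}), and the estimate $4\,\Im\omega_\mu(t,z)\,\Im\omega_\nu(t,w)>|f_2(z,\zeta,w)|^2$ holds everywhere. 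So the inequality
$$
\Im\omega_\mu(t,z)\,\Im\omega_\nu(t,w)\,|G_{(a_1,b_1)}(\omega_\mu(t,z),\omega_\nu(t,w))|^2\ge \Im z\,\Im w\,|G_{(a_t,b_t)}(z,w)|^2
$$
first follows at points where $G_{(a_1,b_1)}(\omega_\mu(t,z),\omega_\nu(t,w))\neq0$, and then extends to the (closed, measure-zero) exceptional set by continuity of both sides in $(z,w)\in\mathbb C^+\times\mathbb C^+$. This completes the proof.
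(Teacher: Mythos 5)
Your proof is correct and follows exactly the route the paper intends: the paper's own proof simply cites the positivity of $\Im\omega_{X_t}$ on upper triangular matrices, the identity $f_2(z,\zeta,w)\,G_{(a_1,b_1)}(\omega_\mu(t,z),\omega_\nu(t,w))=\zeta\,G_{(a_t,b_t)}(z,w)$, and declares the rest identical to Proposition \ref{Bound}, which is precisely the argument you carry out (including the correct placement of the factor $\zeta$, which the paper's displayed relation omits by an evident typo).
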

\begin{proof}
The inequality follows from the fact that $\Im\omega_{X_t}\left(\begin{bmatrix}
z & \zeta \\
0 & w
\end{bmatrix}\right)>0$ whenever $\Im\begin{bmatrix}
z & \zeta \\
0 & w
\end{bmatrix}>0$ in $M_2(\mathbb C)$ and from the
relation 
$$
G_{(a_t,b_t)}(z,w)=f_2(z,\zeta,w)G_{(a_1,b_1)}(\omega_{\mu_1}(t,z),\omega_{\nu_1}(t,w)).
$$
The proof is identical to the proof of Proposition \ref{Bound} and is left as an exercise to the reader.
\end{proof}

\begin{corollary}
Consider a compactly supported Borel probability measure $\eta$ on $\mathbb R^2$ and let 
$t>1$ be given. Let $(\eta_t)_{t\ge1}$ be its partial bi-free convolution semigroup.
Assume that there is a point $(\xi,\zeta)\in\mathbb R^2$ so that $\eta_t(\{(\xi,\zeta)\})>0$. Then
$\eta(\{(\xi/t,\zeta/t)\})>0$ and 
$$
\eta_t(\{(\xi,\zeta)\})=\frac{(t\mu(\{\xi/t\})+1-t)(t\nu(\{\zeta/t\})+1-t)
\eta(\{(\xi/t,\zeta/t)\})}{t\mu(\{\xi/t\})\nu(\{\zeta/t\})+(1-t)\eta(\{(\xi/t,\zeta/t)\})}
$$
\end{corollary}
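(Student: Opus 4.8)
The plan is to imitate the proof of Corollary \ref{nucular}, with Proposition \ref{prop41} in the role of Proposition \ref{Bound} and Equation \eqref{undici} in the role of Equation \eqref{four}. First I would observe that an atom of $\eta_t$ at $(\xi,\zeta)$ descends to the marginals, so $\mu_t(\{\xi\})\ge\eta_t(\{(\xi,\zeta)\})>0$ and $\nu_t(\{\zeta\})>0$; since $(\mu_s)_{s\ge1}$ and $(\nu_s)_{s\ge1}$ are the free additive convolution semigroups generated by $\mu=\mu_1$ and $\nu=\nu_1$, the description of atoms under $\boxplus$ (see \cite{bv,BB}) forces $\mu(\{\xi/t\})>0$, $\nu(\{\zeta/t\})>0$, together with $\mu_t(\{\xi\})=t\mu(\{\xi/t\})+1-t>0$ and $\nu_t(\{\zeta\})=t\nu(\{\zeta/t\})+1-t>0$.

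Next I would record the boundary behaviour of the one-variable subordination functions of \cite{BB}. From $\omega_\mu(t,z)=z/t+(1-1/t)G_\mu(\omega_\mu(t,z))^{-1}$, the identity $G_\mu\circ\omega_\mu(t,\cdot)=G_{\mu_t}$, and the asymptotics $G_{\mu_t}(\xi+iy)\sim\mu_t(\{\xi\})/(iy)$ as $y\downarrow0$, it follows that $\omega_\mu(t,\xi+iy)$ converges nontangentially to $\xi/t$ and that the angular derivative
$$c_\mu:=\lim_{y\downarrow0}\frac{\omega_\mu(t,\xi+iy)-\xi/t}{iy}=\frac{\mu(\{\xi/t\})}{\mu_t(\{\xi\})}$$
exists in $(0,\infty)$; similarly $c_\nu=\nu(\{\zeta/t\})/\nu_t(\{\zeta\})$ for $\omega_\nu(t,\zeta+iy)$. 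Combining this with the dominated convergence argument for two-variable Cauchy transforms used in the proof of Corollary \ref{nucular}, I would then compute, as $y\downarrow0$, the nontangential limits
$$(iy)^2G_{\eta_t}(\xi+iy,\zeta+iy)\to\eta_t(\{(\xi,\zeta)\}),\qquad (iy)^2G_{\mu}(\omega_\mu(t,\xi+iy))G_{\nu}(\omega_\nu(t,\zeta+iy))\to\frac{\mu(\{\xi/t\})\nu(\{\zeta/t\})}{c_\mu c_\nu},$$
$$(iy)^2G_{(a_1,b_1)}\big(\omega_\mu(t,\xi+iy),\omega_\nu(t,\zeta+iy)\big)\to\frac{\eta(\{(\xi/t,\zeta/t)\})}{c_\mu c_\nu}.$$

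With these limits in hand the rest is bookkeeping. For the positivity $\eta(\{(\xi/t,\zeta/t)\})>0$, the first limit shows $G_{(a_t,b_t)}(\xi+iy,\zeta+iy)\ne0$ for $y$ small, so Proposition \ref{prop41} may be applied at $z=\xi+iy$, $w=\zeta+iy$; multiplying through by $y^2$ and letting $y\downarrow0$ gives $\eta(\{(\xi/t,\zeta/t)\})^2\ge c_\mu c_\nu\,\eta_t(\{(\xi,\zeta)\})^2>0$. For the identity, I would take reciprocals in Equation \eqref{undici}, multiply by $(iy)^{-2}$, set $z=\xi+iy$, $w=\zeta+iy$, and pass to the limit using the three displayed limits, which yields
$$\frac{1}{\eta_t(\{(\xi,\zeta)\})}=c_\mu c_\nu\left(\frac{t}{\eta(\{(\xi/t,\zeta/t)\})}+\frac{1-t}{\mu(\{\xi/t\})\nu(\{\zeta/t\})}\right);$$
substituting $c_\mu c_\nu=\mu(\{\xi/t\})\nu(\{\zeta/t\})/(\mu_t(\{\xi\})\nu_t(\{\zeta\}))$ and the expressions for $\mu_t(\{\xi\})$ and $\nu_t(\{\zeta\})$, and clearing denominators, gives precisely the asserted formula. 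The only genuinely delicate step is the existence and finiteness of the angular derivatives $c_\mu,c_\nu$ of $\omega_\mu(t,\cdot),\omega_\nu(t,\cdot)$ at $\xi,\zeta$: this is where one must invoke the atom structure of free additive convolution semigroups (equivalently, a Julia--Carath\'eodory argument for the Pick functions $\omega_\mu(t,\cdot),\omega_\nu(t,\cdot)$); once that is available, every remaining step is a routine limit computation.
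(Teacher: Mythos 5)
Your argument is correct and follows essentially the same route as the paper: descend the atom to the marginals, invoke the boundary behaviour of $\omega_\mu(t,\cdot)$ and $\omega_\nu(t,\cdot)$ from \cite{BB} (your angular derivatives $c_\mu,c_\nu$ are exactly the paper's limits $\lim_{y\downarrow0}\Im\omega_\mu(t,\xi+iy)/y$), use Proposition \ref{prop41} to get positivity of $\eta(\{(\xi/t,\zeta/t)\})$, and then pass to the limit in Equation \eqref{undici}. The only cosmetic difference is that you phrase the existence of $c_\mu,c_\nu$ via Julia--Carath\'eodory, whereas the paper simply quotes \cite[Theorem 3.1]{BB}, which already supplies these limits.
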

\begin{proof}
The presence of an atom of $\eta_t$ at $(\xi,\zeta)$ implies the presence of atoms
for the marginals $\mu_t$ (at $\xi$) and $\nu_t$ (at $\zeta$), respectively.
Thus, as shown in \cite[Theorem 3.1]{BB}, we have
\begin{enumerate}
\item $\lim_{y\downarrow0}\omega_{\mu}(t,\xi+iy)=\xi/t$, $\lim_{y\downarrow0}\omega_{\nu}
(t,\zeta+iy)=\zeta/t$;
\item $\mu_t(\{\xi\})=t\mu(\{\xi/t\})+1-t,\nu_t(\{\zeta\})=t\nu(\{\zeta/t\})+1-t$;
\item $\lim_{y\downarrow0}\frac{\Im\omega_{\mu}(t,\xi+iy)}{y}=\frac1t+\left(1-\frac1t\right)
\frac{1}{\mu_t(\{\xi\})}=\frac{\mu(\{\xi/t\})}{t\mu(\{\xi/t\})+1-t}$ and
\newline\noindent 
$\lim_{y\downarrow0}\frac{\Im\omega_{\nu}(t,\zeta+iy)}{y}=\frac1t+\left(1-\frac1t\right)
\frac{1}{\nu_t(\{\zeta\})}=\frac{\nu(\{\zeta/t\})}{t\nu(\{\zeta/t\})+1-t}$.
\end{enumerate}
In particular, $\mu(\{\xi/t\})>1-1/t$ and $\nu(\{\zeta/t\})>1-1/t.$
Applying Proposition \ref{prop41} to $z=\xi+iy,w=\zeta+iy$ and taking limit as $y\to0$
we obtain
$$
\left(\frac{(t\mu(\{\xi/t\})+1-t)(t\nu(\{\zeta/t\})+1-t)}{\mu(\{\xi/t\})\nu(\{\zeta/t\})}\right)^\frac{1}{2}
\eta(\{(\xi/t,\zeta/t)\})\ge\eta_t(\{(\xi,\zeta)\})>0.
$$
Thus, $\eta(\{(\xi/t,\zeta/t)\})>0$. Multiplying by $y^2$ in \eqref{undici} evaluated in 
$z=\xi+iy,w=\zeta+iy$ and taking limits as $y$ decreases to zero yields
\begin{eqnarray*}
\lefteqn{\eta_t(\{(\xi,\zeta)\})}\\
 & = & \lim_{y\downarrow0}y^2G_{(a_t,b_t)}(\xi+iy,\zeta+iy)\\
& = & \lim_{y\downarrow0}\frac{1}{\frac{t\Im\omega_\mu(t,\xi+iy)\Im\omega_\nu(t,\zeta+iy)}{y^2\Im\omega_\mu(t,\xi+iy)\Im\omega_\nu(t,\zeta+iy)G_{(a_1,b_1)}(\omega_\mu(t,\xi+iy),\omega_\nu(t,\zeta+iy))}+\frac{1-t}{
yG_{a_t}(\xi+iy)yG_{b_t}(\zeta+iy)}}\\
& = & \frac{1}{\frac{t\mu(\{\xi/t\})\nu(\{\zeta/t\})}{(t\mu(\{\xi/t\})+1-t)(t\nu(\{\zeta/t\})+1-t)
\eta(\{(\xi/t,\zeta/t)\})}+\frac{1-t}{(t\mu(\{\xi/t\})+1-t)(t\nu(\{\zeta/t\})+1-t)}}\\
& = & \frac{(t\mu(\{\xi/t\})+1-t)(t\nu(\{\zeta/t\})+1-t)
\eta(\{(\xi/t,\zeta/t)\})}{t\mu(\{\xi/t\})\nu(\{\zeta/t\})+(1-t)\eta(\{(\xi/t,\zeta/t)\})},
\end{eqnarray*}
which concludes our proof.
\end{proof}

We record a more elegant version of the relation from the above corollary:
\begin{equation}\label{dodici}
\frac{\mu_t(\{\xi\})\nu_t(\{\zeta\})}{\eta_t(\{(\xi,\zeta)\})}=t\frac{\mu(\{\xi/t\})\nu(\{\zeta/t\})}{\eta
(\{(\xi/t,\zeta/t)\})}+1-t.
\end{equation}

\begin{ex}\label{ex43}
We compute a simple example: let $\eta=\frac34\delta_{(1,1)}
+\frac18\delta_{(0,0)}+\frac18\delta_{(1,0)}$. Then
$\mu=\frac18\delta_{0}+\frac78\delta_1$, $\nu=
\frac14\delta_0+\frac34\delta_1$. 
The longest an atom can hope to survive is for as long as $t<4$. Indeed,
$\mu_t(\{0\})=\max\{0,t\mu(\{0\})+1-t\}=\max\{0,1-\frac78t\}$, 
$\mu_t(\{t\})=\max\{0,1-\frac18t\}$, $\nu_t(\{0\})=\max\{0,1-\frac34t\}$,
$\nu_t(\{t\})=\max\{0,1-\frac14t\}$. So if $t<8/7$, then 
$$
\eta_t(\{(0,0)\})=\frac{\frac18\left(1-\frac78t\right)\left(1-\frac34t\right)}{\frac{t}{32}+\frac{1-t}{8}}
=\left(1-\frac78t\right),
$$
if $t<4$, then
$$
\eta_t(\{(t,t)\})=\frac{\frac34\left(1-\frac18t\right)\left(1-\frac14t\right)}{\frac{21}{24}t+\frac34(1-t)}
=\left(1-\frac{t}{4}\right),
$$
and if $t<4/3$, then
$$
\eta_t(\{(t,0)\})=\frac{\frac18\left(1-\frac18t\right)\left(1-\frac34t\right)}{\frac7{32}t+\frac{1-t}{8}}
=\frac18\frac{(8-t)(4-3t)}{4+3t}.
$$
A direct computation shows that the sum of the mass of the three atoms is strictly less than one for 
any $t>1$, so that a nonatomic part occurs immediately after $t=1$, as in the case of free convolution
of measures on $\mathbb R$.

Unlike for free convolution semigroups, the expression for the non-atomic part
of $\eta_t$ is much more unwieldy. Indeed, while in principle formula \eqref{undici} allows
for a direct computation of $G_{\eta_t}$, the actual computation, even for such a simple
measure as the one from Example \ref{ex43}, becomes uncomfortably long. We provide here just
the necessary ingredients: 
The reciprocals of the Cauchy transforms of the marginals at $t=1$ are $G_\mu(z)^{-1}=
z-\frac{7z}{8z-1}$ and $G_\nu(w)^{-1}=w-\frac{3w}{4w-1}$, and of the reciprocal of the
Cauchy transform of $\eta$ is $G_\eta(z,w)^{-1}=\frac{8zw(z-1)(w-1)}{8zw-2z-w+1}$.
For given $t>1$, the subordination functions associated to the two marginals are
$$
\omega_\mu(t,z)=\frac{8z+8-7t+\sqrt{[8z-7t+6-2\sqrt{7(t-1)}][8z-7t+6+2\sqrt{7(t-1)}]}}{16},
$$
and
$$
\omega_\nu(t,w)=\frac{4w+4-3t+\sqrt{[4w+2-3t-2\sqrt{3(t-1)}][4w+2-3t+2\sqrt{3(t-1)}]}}{8}.
$$
Replacing in \eqref{undici} provides the explicit (algebraic) expression for $G_{\eta_t}$.

The case $t=4$, the time when the last atom disappears, provides
\[\omega_\mu(4, z) = \frac{2z - 5 + \sqrt{4z^2 - 22z + 25}}{4}, \quad \omega_\nu(4, w) = \frac{w - 2 + \sqrt{w^2 - 5w + 4}}{2},\]
so that
$$\frac{1}{G_\mu(\omega_\mu(4, z))} = \frac{z - 5 + \sqrt{4z^2 - 22z + 25}}{3},$$
$$\frac{1}{G_\nu(\omega_\nu(4, w))} = \frac{w - 4 + 2\sqrt{w^2 - 5w + 4}}{3}.$$
Then
\begin{eqnarray*}
\lefteqn{\frac{1 - 4}{G_\mu(\omega_\mu(4, z))G_\nu(\omega_\nu(4, w))} =}\\
& &  \frac{-\left(z - 5 + \sqrt{4z^2 - 22z + 25}\right)\left(w - 4 + 2\sqrt{w^2 - 5w + 4}\right)}{3}
\end{eqnarray*}
and
\begin{eqnarray*}
\lefteqn{\frac{4}{G_\eta(\omega_\mu(4, z), \omega_\nu(4, w))}}\\
& = & \left(2z-5+\sqrt{4z^2 - 22z + 25}\right)\left(2z-9+\sqrt{4z^2 - 22z + 25}\right)\\
& &\mbox{}\times\frac{\left(w-2+\sqrt{w^2-5w+4}\right)\left(w-4+\sqrt{w^2-5w+4}\right)}{\left[2\left(2z-5+\sqrt{4z^2-22z+25}\right)-1\right]\left[w-2+\sqrt{w^2-5w+4}-\frac12\right]+\frac32}.
\end{eqnarray*}
Substituting $\frac{1 - 4}{G_\mu(\omega_\mu(4, z))G_\nu(\omega_\nu(4, w))}$ and 
$\frac{4}{G_\eta(\omega_\mu(4, z), \omega_\nu(4, w))}$ into Equation \eqref{undici} produces $G_{\eta_4}(z,w)$.

\end{ex}

%%%%%%%%%%%%%%%%%%%%%%%%%%%%%%%%%%%%%%%%%%%%%%%%
%%%%%%%%%%%%%%%%%%%%%%%%%%%%%%%%%%%%%%%%%%%%%%%%
%%%%%%%%%%%%%%%%%%%%%%%%%%%%%%%%%%%%%%%%%%%%%%%%

\section{Conditionally bi-free analytic subordination}\label{sec:c-bi-free}

In this section, we discuss how the method of Section \ref{sec:bifreesubord} can be used to study the conditionally bi-free additive convolution
in the scalar-valued setting. Motivated by the universal constructions for conditionally free independence \cite{BLS} and bi-free independence 
\cite{BiFree1}, two of us introduced in \cite{GS1} the notion of conditionally bi-free independence for pairs of algebras in the setting of a 
two-state noncommutative probability space $(\mathcal A, \varphi, \psi)$ such that conditionally bi-freeness reduces to bi-freeness when 
$\varphi = \psi$ and reduces to conditionally freeness when only left or only right algebras are considered. For the theoretical definition in 
terms of actions on a reduced free product space, we refer to \cite[Definition 3.4]{GS1}.

Let $(a_1, b_1)$ and $(a_2, b_2)$ be bi-partite self-adjoint two-faced pairs in a two-state $C^*$-noncommutative
probability space $(\mathcal A,\varphi,\psi)$ such that the joint distributions of $(a_j, b_j)$ with respect to 
$(\varphi,\psi)$ coincide with the moments of a pair $(\theta_j, \eta_j)$ of compactly supported probability 
meausres on $\mathbb{R}^2$ via
\[
\varphi(a_j^mb_j^n)=\int_{\mathbb R^2}t^ms^n\,\mathrm{d}\theta_j(t,s) \text{ and } \psi(a_j^mb_j^n)=\int_{\mathbb R^2}t^ms^n\,\mathrm{d}\eta_j(t,s),\quad j=1,2.
\]
If $(a_1, b_1)$ and $(a_2, b_2)$ are conditionally bi-free with respect to $(\varphi, \psi)$, then the joint distribution of 
$(a_1 + a_2, b_1 + b_2)$ is again a pair $(\theta,\eta)$ of compactly supported probability measures on 
$\mathbb R^2$. The pair $(\theta,\eta)$ depends only on the pairs $(\theta_1, \eta_1)$ and $(\theta_2, \eta_2)$, 
and is called the conditionally bi-free additive convolution of $(\theta_1, \eta_1)$ and $(\theta_2, \eta_2)$, denoted 
$(\theta,\eta)=(\theta_1,\eta_1) \bboxplus_{\mathrm{c}}(\theta_2,\eta_2)$. More precisely, $\eta=\eta_1\bboxplus\eta_2$ 
is the bi-free additive convolution of $\eta_1$ and $\eta_2$, and the moments of $\theta$ can be computed using the moments of 
$\theta_1,\theta_2,\eta_1,$ and $\eta_2$ via the formula provided by \cite[Theorem 4.8]{GS1}. As in Section \ref{back}, 
we denote by $\sigma_j, \tau_j$ the marginals of $\theta_j$ and $\mu_j, \nu_j$ the marginals of $\eta_j$ so that 
$\sigma_j$ and $\mu_j$ are the distributions of $a_j$ with respect to $\varphi$ and $\psi$, respectively, and $\tau_j$ and $\nu_j$ are the 
distributions of $b_j$ with respect to $\varphi$ and $\psi,$ respectively. If $\sigma, \tau$ denote the marginals of $\theta$, 
then $(\sigma, \mu)$ is the conditionally free convolution of $(\sigma_1, \mu_1)$ and $(\sigma_2, \mu_2)$, 
denoted $(\sigma,\mu) =(\sigma_1,\mu_1)\boxplus_{\mathrm{c}}(\sigma_2,\mu_2)$, where $\mu=\mu_1\boxplus\mu_2$, 
and similarly $(\tau,\nu)=(\tau_1,\nu_1)\boxplus_{\mathrm{c}}(\tau_2,\nu_2)$, where $\tau=\tau_1\boxplus \tau_2$.

To linearize the conditionally bi-free additive convolution, the partial conditionally bi-free 
$R$-transform was introduced in \cite[Section 5]{GS1} as the analogue of the partial bi-free $R$-transform, 
which is also a function of two complex variables defined on a neighbourhood of zero in $\mathbb{C}^2$. 
To introduce this function, we shall first review how the conditionally free additive convolution is calculated 
(see \cite{BLS,B08}). Let $G_{\sigma_j}$ and $G_{\mu_j}$ be the (one-dimensional) Cauchy transforms of 
$\sigma_j$ and $\mu_j$, respectively, and let $K_{\mu_j}$ be the inverse of $G_{\mu_j}$ on a neighbourhood 
of infinity as in Section \ref{back}. The \textit{conditionally free $R$-transform} of $(\sigma_j, \mu_j)$ is defined by
\[R_{(\sigma_j, \mu_j)}(z) = K_{\mu_j}(z) - \frac{1}{G_{\sigma_j}(K_{\mu_j}(z))}\]
on a small neighbourhood of zero where $K_{\mu_j}$ is defined, and satisfies the relation
\[R_{(\sigma, \mu)}(z) = R_{(\sigma_1, \mu_1)}(z) + R_{(\sigma_2, \mu_2)}(z)\]
for $z$ in a small enough neighbourhood of zero. (While $K_{\mu_j}$ has a simple pole at zero, 
the function $R_{(\sigma_j, \mu_j)}(z)$ extends holomorphically, not meromorphically, in $0$.)

For the pair $(\theta_j, \eta_j)$ of measures on $\mathbb R^2$, let $G_{\theta_j}$ and $G_{\eta_j}$
be the Cauchy transforms of $\theta_j$ and $\eta_j$, respectively. The \textit{partial conditionally bi-free $R$-transform} of $(\theta_j, \eta_j)$ is defined by
\[R_{(\theta_j, \eta_j)}(z, w) = zR_{(\sigma_j, \mu_j)}(z) + wR_{(\tau_j, \nu_j)}(w) + \widetilde{R}_{(\theta_j, \eta_j)}(z, w),\]
where
\begin{eqnarray*}
\lefteqn{\widetilde{R}_{(\theta_j, \eta_j)}(z, w) =}\\
& & \frac{zwG_{\theta_j}(K_{\mu_j}(z), K_{\nu_j}(w))}{G_{\sigma_j}(K_{\mu_j}(z))G_{\tau_j}(K_{\nu_j}(w))G_{\eta_j}(K_{\mu_j}(z), K_{\nu_j}(w))} - \frac{zw}{G_{\eta_j}(K_{\mu_j}(z), K_{\nu_j}(w))},
\end{eqnarray*}
for $z, w$ in a small enough bi-disk centred at zero (see \cite[Corollary 5.7 and Definition 5.8]{GS1}). 
The crucial property of the partial conditionally bi-free $R$-transform is
\[
R_{(\theta, \eta)}(z, w) = R_{(\theta_1, \eta_1)}(z, w) + R_{(\theta_2, \eta_2)}(z, w), \quad |z|+|w|\text{ sufficiently small},
\]
if $(\theta, \eta) = (\theta_1, \eta_1) \bboxplus_{\mathrm{c}} (\theta_2, \eta_2)$. In terms of random variables, 
the conditionally free and bi-free $R$-transforms are defined by exactly the same formulas as above except the 
notations are slightly different, which we summarize as follows. For a random variable $a$ in a two-state noncommutative 
probability space $(\mathcal A,\varphi,\psi)$, let $G_a^\varphi$ and $G_a^\psi$ be the Cauchy transforms of 
$a$ with respect to $\varphi$ and $\psi$, respectively. Then the conditionally free $R$-transform of $a$ is denoted by 
$R_a^{\mathrm{c}}$. Similarly, for a two-faced pair $(a, b)$ in $(\mathcal A,\varphi,\psi)$, let $G_{(a, b)}^\varphi$ 
and $G_{(a,b)}^\psi$ be the two-variable Cauchy transforms of $(a,b)$ with respect to $\varphi$ and $\psi$, 
respectively. The partial conditionally bi-free $R$-transform of $(a, b)$ is denoted by $R^{\mathrm{c}}_{(a,b)}$.

The notion of conditionally free $R$-transform can be generalized to the operator-valued setting as follows 
(see, e.g., \cite{BPV, PV}). Let $(M,E,F,B,\mathcal{D})$ be a $C^*$-$(B, \mathcal{D})$-noncommutative 
probability space. That is, $B \subseteq M$ and $B \subseteq \mathcal{D}$ are unital inclusions of unital 
$C^*$-algebras, $E\colon M \to B$ is a unit-preserving conditional expectation, and $F\colon M \to\mathcal{D}$ is a unital 
$B$-bimodule map. For $X=X^*\in M$, the $B$-valued and $\mathcal{D}$-valued Cauchy transforms of $X$
with respect to $E$ and $F$ are defined by
$G_X(b) = E\left[(b - X)^{-1}\right]$ and $\mathcal{G}_X(b) = F\left[(b - X)^{-1}\right]$, respectively,
for $b\in B,\Im b>0$ or $\|b^{-1}\|<\|X\|^{-1}$. 
As seen in Section \ref{back} when discussing the $B$-valued free $R$-transform, the function 
$G_X$ has a compositional inverse, denoted $K_X$, defined on an open set of $B$ which contains zero 
in its norm-closure. The $\mathcal{D}$-valued conditionally free $R$-transform of $X$ is defined by
\[
R_X^{\mathrm{c}}(b) = K_X(b) - \mathcal{G}_X(K_X(b))^{-1}, \quad \|b\| \text{ small},
\]
where the exponent $-1$ denotes the inverse of $ \mathcal{G}_X(K_X(b))$ in the algebra $\mathcal D$.
The $\mathcal{D}$-valued conditionally free $R$-transform plays the same role as its scalar-valued counterpart: if
$X_1 = X_1^*, X_2 = X_2^* \in M$ are conditionally free over $(B,\mathcal{D})$ with respect to $(E,F)$, then
$R_{X_1 + X_2}^{\mathrm{c}}(b) = R_{X_1}^{\mathrm{c}}(b) + R_{X_2}^{\mathrm{c}}(b)$ for all $b\in B$ of sufficiently small norm. 
We consider now the conditionally free analogue of Lemma \ref{lem1}.

Let $(a, b)$ be a self-adjoint two-faced pair in a two-state $C^*$-noncommutative probability space $(\mathcal A,\varphi,\psi)$. 
Define $X\in M_2(\mathcal A)$ by $X = \begin{bmatrix}
a & 0\\
0 & b
\end{bmatrix}$, let $B=\mathcal{D}=M_2(\mathbb{C})$, and define $M_2(\varphi), M_2(\psi)\colon M_2(\mathcal A)\to M_2(\mathbb{C})$ 
by $M_2(\varphi)=\varphi\otimes\mathrm{Id}_{M_2(\mathbb{C})}$ and $M_2(\psi)=\psi\otimes\mathrm{Id}_{M_2(\mathbb{C})}$, respectively. 
Furthermore, let $G_X$ and $\mathcal{G}_X$ be the Cauchy transforms of $X$ with respect to $M_2(\psi)$ and $M_2(\varphi)$, 
respectively, and let $K_X$ be the compositional inverse of $G_X$. Using the results from Section \ref{sec:bifreesubord} above, we have
\begin{eqnarray*}
\lefteqn{R_X^\mathrm{c}\left(\begin{bmatrix}
z & \zeta\\
0 & w
\end{bmatrix}\right)}\\ 
&= & K_X\left(\begin{bmatrix}
z & \zeta\\
0 & w
\end{bmatrix}\right) - \mathcal{G}_X\left(K_X\left(\begin{bmatrix}
z & \zeta\\
0 & w
\end{bmatrix}\right)\right)^{-1}\\
&= &\begin{bmatrix}
K_a(z) & \frac{-\zeta}{G_{(a, b)}^\psi(K_a(z), K_b(w))}\\
0 & K_b(w)
\end{bmatrix} - \mathcal{G}_X\left(\begin{bmatrix}
K_a(z) & \frac{-\zeta}{G_{(a, b)}^\psi(K_a(z), K_b(w))}\\
0 & K_b(w)
\end{bmatrix}\right)^{-1}\\
&= & \begin{bmatrix}
K_a(z) & \frac{-\zeta}{G_{(a, b)}^\psi(K_a(z), K_b(w))}\\
0 & K_b(w)
\end{bmatrix} - \begin{bmatrix}
G_a^\varphi(K_a(z)) & \frac{\zeta G_{(a, b)}^\varphi(K_a(z), K_b(w))}{G_{(a, b)}^\psi(K_a(z), K_b(w))}\\
0 & G_b^\varphi(K_b(w))
\end{bmatrix}^{-1}\\
& = & \begin{bmatrix}
K_a(z)-\frac{1}{G_a^\varphi(K_a(z))} & \zeta\frac{G_{(a, b)}^\varphi(K_a(z), K_b(w))-G_a^\varphi(K_a(z))G_b^\varphi(K_b(w))}{G_a^\varphi(K_a(z))G_b^\varphi(K_b(w))G_{(a, b)}^\psi(K_a(z), K_b(w))}\\
0 & K_b(w)-\frac{1}{G_b^\varphi(K_b(w))}
\end{bmatrix}\\
&= &\begin{bmatrix}
R_a^\mathrm{c}(z) & \frac{\zeta}{zw}\widetilde{R}_{(a, b)}^{\mathrm{c}}(z, w)\\
0 & R_b^{\mathrm{c}}(w)
\end{bmatrix},
\end{eqnarray*}
a perfect analogue of Lemma \ref{lem1}. We record our conclusion in the following:

\begin{lemma}\label{lem:6.1}
Let $(a,b)$ be a self-adjoint two-faced pair in a two-state $C^*$-noncommutative probability space $(\mathcal A,\varphi,\psi)$. 
Define the $M_2(\mathbb C)$-valued random variable $X=\begin{bmatrix}
a & 0 \\
0 & b
\end{bmatrix}\in M_2(\mathcal A)$ and let $M_2(\varphi), M_2(\psi)$ be as above. Then 
$$
R^{\mathrm{c}}_{X}\left(\begin{bmatrix}
z & \zeta\\
0 & w
\end{bmatrix}\right)=\begin{bmatrix}
R_a^{\mathrm{c}}(z) & \frac{\zeta}{zw}(R^{\mathrm{c}}_{(a,b)}(z,w)-zR^{\mathrm{c}}_a(z)-wR^{\mathrm{c}}_b(w))\\
0 & R^{\mathrm{c}}_b(w)
\end{bmatrix}.
$$
In particular, if $(a_1,b_1)$ and $(a_2,b_2)$ in $(\mathcal A, \varphi, \psi)$ are conditionally bi-free
with respect to $(\varphi, \psi)$, then, under the above notation,
\begin{equation}\label{upperc}
R^{\mathrm{c}}_{X_1+X_2}\left(\begin{bmatrix}
z & \zeta\\
0 & w
\end{bmatrix}\right)=
R^{\mathrm{c}}_{X_1}\left(\begin{bmatrix}
z & \zeta\\
0 & w
\end{bmatrix}\right)+R^{\mathrm{c}}_{X_2}\left(\begin{bmatrix}
z & \zeta\\
0 & w
\end{bmatrix}\right)
\end{equation}
for all $z,w\in\mathbb C$ of sufficiently small absolute value and
all $\zeta\in\mathbb C$.
\end{lemma}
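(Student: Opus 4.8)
The plan is to obtain both assertions of the lemma by reading off the operator-valued Schur-complement computation carried out just above the statement, and then combining it with the scalar linearization properties of the conditionally free and conditionally bi-free $R$-transforms; the whole argument runs parallel to the proof of Lemma \ref{lem1}.

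For the first displayed identity, I would simply observe that the computation preceding the statement already produces $R^{\mathrm{c}}_X\!\left(\begin{bmatrix} z & \zeta\\ 0 & w\end{bmatrix}\right)$ as the upper triangular matrix with diagonal entries $R^{\mathrm{c}}_a(z)$, $R^{\mathrm{c}}_b(w)$ and $(1,2)$ entry $\frac{\zeta}{zw}\widetilde{R}^{\mathrm{c}}_{(a,b)}(z,w)$. Since, by definition of the partial conditionally bi-free $R$-transform, $R^{\mathrm{c}}_{(a,b)}(z,w)=zR^{\mathrm{c}}_a(z)+wR^{\mathrm{c}}_b(w)+\widetilde{R}^{\mathrm{c}}_{(a,b)}(z,w)$, substituting $\widetilde{R}^{\mathrm{c}}_{(a,b)}(z,w)=R^{\mathrm{c}}_{(a,b)}(z,w)-zR^{\mathrm{c}}_a(z)-wR^{\mathrm{c}}_b(w)$ into the $(1,2)$ entry gives exactly the matrix claimed. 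The only input needed here is that $G_X$ and $\mathcal{G}_X$ send upper triangular $2\times2$ matrices to upper triangular $2\times2$ matrices, hence so does the compositional inverse $K_X$; this is precisely the content of the Schur-complement formulas established in Section \ref{sec:bifreesubord}, and it holds for an arbitrary two-faced pair $(a,b)$, with no (bi-)freeness assumption.

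For \eqref{upperc}, I would first apply the formula just proved to the self-adjoint two-faced pair $(a_1+a_2,b_1+b_2)$, noting that $X_1+X_2$ is exactly the diagonal matrix built from this pair; this expresses $R^{\mathrm{c}}_{X_1+X_2}$ at $\begin{bmatrix} z & \zeta\\ 0 & w\end{bmatrix}$ entrywise through $R^{\mathrm{c}}_{a_1+a_2}$, $R^{\mathrm{c}}_{b_1+b_2}$ and $R^{\mathrm{c}}_{(a_1+a_2,b_1+b_2)}$. Then I would invoke the scalar linearization results: because conditionally bi-free independence restricts to conditionally free independence on the left and on the right faces, $a_1,a_2$ and $b_1,b_2$ are conditionally free, so $R^{\mathrm{c}}_{a_1+a_2}=R^{\mathrm{c}}_{a_1}+R^{\mathrm{c}}_{a_2}$ and $R^{\mathrm{c}}_{b_1+b_2}=R^{\mathrm{c}}_{b_1}+R^{\mathrm{c}}_{b_2}$, while the partial conditionally bi-free $R$-transform is additive, $R^{\mathrm{c}}_{(a_1+a_2,b_1+b_2)}=R^{\mathrm{c}}_{(a_1,b_1)}+R^{\mathrm{c}}_{(a_2,b_2)}$ (these are the linearization statements recalled above from \cite{GS1,BLS}). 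Substituting these three identities into each entry, splitting every entry into its index-$1$ part and its index-$2$ part, and recognizing --- via the first part of the lemma applied to $(a_1,b_1)$ and to $(a_2,b_2)$ --- that these two groups are $R^{\mathrm{c}}_{X_1}$ and $R^{\mathrm{c}}_{X_2}$ at the same matrix, yields \eqref{upperc}. All equalities are valid on a common neighbourhood of $0$, which is where each of the transforms involved is defined.

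I do not expect a genuine obstacle: once the first-part formula is in place, the argument is purely linear bookkeeping. The one point that demands care --- and which is already handled correctly in the computation preceding the statement --- is the placement of the two states, namely that $K_X$ must be the inverse of the Cauchy transform $G_X$ taken with respect to $M_2(\psi)$ (so that it linearizes the $\psi$-marginals, consistently with $\mu=\mu_1\boxplus\mu_2$), whereas $\mathcal{G}_X$ is the Cauchy transform with respect to $M_2(\varphi)$. With this convention the $(1,2)$ entry of $R^{\mathrm{c}}_X$ matches $\frac{\zeta}{zw}\widetilde{R}^{\mathrm{c}}_{(a,b)}(z,w)$ with the correct sign and no transposition issues. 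An alternative, slightly longer route would be to first prove a conditionally bi-free analogue of Lemma \ref{Matr-Bi-Free}, showing $X_1,X_2$ are conditionally free with amalgamation over $M_2(\mathbb C)$ with respect to $(M_2(\psi),M_2(\varphi))$, and then deduce \eqref{upperc} from the operator-valued linearization of the conditionally free $R$-transform; the direct computation above avoids needing this.
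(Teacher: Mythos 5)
Your proof is correct and is essentially the paper's own argument: the first identity is exactly the Schur-complement computation carried out immediately before the statement (with the $(1,2)$ entry rewritten via $\widetilde{R}^{\mathrm{c}}_{(a,b)}(z,w)=R^{\mathrm{c}}_{(a,b)}(z,w)-zR^{\mathrm{c}}_a(z)-wR^{\mathrm{c}}_b(w)$), and \eqref{upperc} follows, just as you say, by applying that formula to $(a_1+a_2,b_1+b_2)$ and invoking the three scalar linearization identities for $R^{\mathrm{c}}_{a_1+a_2}$, $R^{\mathrm{c}}_{b_1+b_2}$ and $R^{\mathrm{c}}_{(a_1+a_2,b_1+b_2)}$. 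One caution about your closing aside: the ``alternative route'' is not actually available, since the remark immediately following the lemma points out that conditional bi-freeness of $(a_1,b_1)$ and $(a_2,b_2)$ does \emph{not} imply that $X_1$ and $X_2$ are conditionally free with amalgamation over $(M_2(\mathbb C),M_2(\mathbb C))$; as your main argument never uses this, the proof stands, but you should drop that sentence.
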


\begin{remark}
As with the bi-free case, $(a_1, b_1)$ and $(a_2, b_2)$ being conditionally bi-free with respect to $(\varphi, \psi)$ does not necessarily imply $X_1$ and $X_2$ are conditionally free with amalgamation over $(M_2(\mathbb{C}), M_2(\mathbb{C}))$ with respect to $(M_2(\varphi), M_2(\psi))$, so that Equation \eqref{upperc} does not extend to arbitrary $2 \times 2$
matrices $\begin{bmatrix}
z & \zeta\\
\zeta' & w
\end{bmatrix}$.
\end{remark}

Next, we discuss how the single-variable analytic subordination functions for $\boxplus$ can be used to compute $\bboxplus_{\mathrm{c}}$. 
Let $(\theta_1,\eta_1)$ and $(\theta_2,\eta_2)$ be as above with marginals $\sigma_j,\tau_j$ for $\theta_j$ and $\mu_j,\nu_j$ for $\eta_j$, 
and let $(\theta,\eta) = (\theta_1,\eta_1)\bboxplus_{\mathrm{c}}(\theta_2,\eta_2)$ with marginals $\sigma,\tau$ for $\theta$ and $\mu,\nu$ for $\eta$. 
Given a Borel probability measure $\lambda$ on $\mathbb R$, recall the function  $h_\lambda$ defined in \eqref{h} by 
$h_\lambda(z)=\frac{1}{G_\lambda(z)}-z$. It was shown in 
\cite[Proposition 3]{B08} that
\begin{equation}\label{eqn:h-function}
h_\sigma(z)=h_{\sigma_1}(\omega_{a_1}(z))+h_{\sigma_2}(\omega_{a_2}(z)),\quad z\in\mathbb{C}^+,
\end{equation}
where $(\sigma,\mu) =(\sigma_1,\mu_1)\boxplus_{\mathrm{c}}(\sigma_2,\mu_2)$ and $\omega_{a_1},\omega_{a_2}$ 
are the single-variable subordination functions related to $\mu_1, \mu_2$ (see \eqref{subord}). 
Of course, we also have $h_\tau(w) = h_{\tau_1}(\omega_{b_1}(w)) + h_{\tau_2}(\omega_{b_2}(w))$, 
where $(\tau, \mu) = (\tau_1, \nu_1) \boxplus_{\mathrm{c}} (\tau_2, \nu_2)$ 
with subordination functions $\omega_{b_1}, \omega_{b_2}$. 
In the following, we present a two-dimensional analogue of Equation \eqref{eqn:h-function}.

Since the partial conditionally bi-free $R$-transform linearizes $\bboxplus_{\mathrm{c}}$, we have
\begin{align*}
&\sum_{j = 1}^2\left[\frac{G_{\theta_j}(K_{\mu_j}(z), K_{\nu_j}(w))}{G_{\sigma_j}(K_{\mu_j}(z))G_{\tau_j}(K_{\nu_j}(w))G_{\eta_j}(K_{\mu_j}(z), K_{\nu_j}(w))} - \frac{1}{G_{\eta_j}(K_{\mu_j}(z), K_{\nu_j}(w))}\right]\\
&= \frac{G_{\theta}(K_{\mu}(z), K_{\nu}(w))}{G_{\sigma}(K_{\mu}(z))G_{\tau}(K_{\nu}(w))G_{\eta}(K_{\mu}(z), K_{\nu}(w))} - \frac{1}{G_{\eta}(K_{\mu}(z), K_{\nu}(w))}.
\end{align*}
Replacing in the above $z$ by $G_{\mu}(z)$ and $w$ by $G_{\nu}(w)$ yields
\begin{align}\label{eqn:theta}
\begin{split}
&\sum_{j = 1}^2\left[\frac{G_{\theta_j}(\omega_{a_j}(z), \omega_{b_j}(w))}{G_{\sigma_j}(\omega_{a_j}(z))G_{\tau_j}(\omega_{b_j}(w))G_{\eta_j}(\omega_{a_j}(z), \omega_{b_j}(w))} - \frac{1}{G_{\eta_j}(\omega_{a_j}(z), \omega_{b_j}(w))}\right]\\
&= \frac{G_{\theta}(z, w)}{G_{\sigma}(z)G_{\tau}(w)G_{\eta}(z, w)} - \frac{1}{G_{\eta}(z, w)}.
\end{split}
\end{align}
For a probability measure $\rho$ on $\mathbb R^2$ with marginals $\rho^{(1)}$ and $\rho^{(2)}$, recall the definition
\eqref{TildeE} of the analytic function $\widetilde{E}_\rho(z,w)=\frac{G_\rho(z, w)}{G_{\rho^{(1)}}(z)G_{\rho^{(2)}}(w)} - 1,$ $(z,w)\in(\mathbb{C}\setminus\mathbb R)^2.$
 Using this function, Equation \eqref{eqn:theta} can be written as
\begin{equation}\label{eqn:theta2}
\frac{\widetilde{E}_\theta(z, w)}{G_\eta(z,w)}=\frac{\widetilde{E}_{\theta_1}(\omega_{a_1}(z),\omega_{b_1}(w))}{G_{\eta_1}(\omega_{a_1}(z),\omega_{b_1}(w))}+\frac{\widetilde{E}_{\theta_2}(\omega_{a_2}(z),\omega_{b_2}(w))}{G_{\eta_2}(\omega_{a_2}(z),\omega_{b_2}(w))}
\end{equation}
as the conditionally bi-free analogue of Equation \eqref{eqn:h-function}. As shown in Section \ref{sec:bifreesubord}, the above equation 
can be viewed as an equality of analytic functions on $\mathbb C^+\times\mathbb C^+$ as soon as we multiply both sides by $G_\eta(z,w)$. Otherwise, 
it is an equality of meromorphic functions, as explained in the comments following the proof of Proposition \ref{Bound}.

Let $(M,E,F,B,\mathcal{D})$ be a $C^*$-$(B,\mathcal{D})$-noncommutative probability space. For $Y = Y^* \in M$, define
\begin{equation}\label{eqn:hfunctionOpV}
h_Y(b) = \mathcal{G}_Y(b)^{-1} - b
\end{equation}
for $b\in B$ with $\Im(b)>0$ (recall that $B\subseteq\mathcal D$). The following operator-valued analogue of Equation \eqref{eqn:h-function} was proved in \cite[Lemma 2.14]{BPV}:
$$
h_{Y_1 + Y_2}(b) = h_{Y_1}(\omega_{Y_1}(b)) + h_{Y_2}(\omega_{Y_2}(b)),
$$
whenever $Y_1,Y_2 \in M$ are self-adjoint, free over $(B,\mathcal{D})$, where $\omega_{Y_j}$ is the $B$-valued subordination function satisfying $G_{Y_1+Y_2}=G_{Y_j}\circ\omega_{Y_j}$.

In view of Lemma \ref{lem:6.1} and Equation \eqref{eqn:hfunctionOpV}, we obtain the following analogue of Remark \ref{rem2}.

\begin{remark}
Under the assumptions and notation of Remark \ref{rem2} and the above discussions, if $(a_1, b_1)$ and $(a_2, b_2)$ in $(\mathcal A,\varphi,\psi)$ are conditionally bi-free with respect to 
$(\varphi,\psi)$, and if $Y_1$ and $Y_2$ in $(M_2(\mathcal A), M_2(\varphi),M_2(\psi),M_2(\mathbb{C}),M_2(\mathbb{C}))$ are conditionally free with respect to $(M_2(\varphi),M_2(\psi))$, 
then
\begin{equation}\label{eqn:GEquation}
\mathcal{G}_{X_1 + X_2}\left(\begin{bmatrix}
z & \zeta\\
0 & w
\end{bmatrix}\right)^{-1}\! + \begin{bmatrix}
z & \zeta\\
0 & w
\end{bmatrix}\! =\! (\mathcal{G}_{X_1} \circ \omega_{Y_1})\left(\begin{bmatrix}
z & \zeta\\
0 & w
\end{bmatrix}\right)^{-1}\!+ (\mathcal{G}_{X_2} \circ \omega_{Y_2})\left(\begin{bmatrix}
z & \zeta\\
0 & w
\end{bmatrix}\right)^{-1}
\end{equation}
for all $z, w \in \mathbb{C}^+$, $\zeta \in \mathbb{C}$. Moreover, as seen in the discussions following Remark \ref{rem2}, $\omega_{Y_j}$ is given by
\[\omega_{Y_j}\left(\begin{bmatrix}
z & \zeta\\
0 & w
\end{bmatrix}\right) = \begin{bmatrix}
\omega_{a_j}(z) & \Pi_j(z, \zeta, w)\\
0 & \omega_{b_j}(w)
\end{bmatrix},\]
where $\omega_{a_j}$ and $\omega_{b_j}$ are the single-variable subordination functions with respect to $\psi$ (i.e. 
$G_{a_1 + a_2}^\psi = G^\psi_{a_j} \circ \omega_{a_j}$ and $G_{b_1 + b_2}^\psi = G^\psi_{b_j} \circ \omega_{b_j}$, 
$j = 1, 2$, and $\Pi_j(z, \zeta, w)$ is the function introduced in Theorem {\rm \ref{Main}}). Consequently, we recover 
Equation \eqref{eqn:theta2} from Equation \eqref{eqn:GEquation}.
\end{remark}

We can now state the following analogue of Theorem \ref{Main} in the 
scalar-valued setting, which easily follows from the above considerations.

\begin{theorem}
Let $(\mathcal A,\varphi,\psi)$ be a two-state $C^*$-noncommutative probability space, and let 
$(a_1, b_1)$ and $(a_2, b_2)$ be self-adjoint two-faced pairs in $(\mathcal A,\varphi,\psi)$ which are 
conditionally bi-free with respect to $(\varphi,\psi)$. Denote $X_j = \begin{bmatrix}
a_j & 0\\
0 & b_j
\end{bmatrix}$, $j = 1, 2$. Then
\begin{eqnarray*}
\lefteqn{M_2(\varphi)\left[\left(\begin{bmatrix}
z & \zeta\\
0 & w
\end{bmatrix}-X_1-X_2\right)^{-1}\right]^{-1} + \begin{bmatrix}
z & \zeta\\
0 & w
\end{bmatrix}=}\\
& & M_2(\varphi)\left[\left(\begin{bmatrix}
\omega_{a_1}(z) & \Pi_1(z,\zeta,w)\\
0 & \omega_{b_1}(w)
\end{bmatrix}-X_1\right)^{-1}\right]^{-1}\\
& & \mbox{}+ M_2(\varphi)\left[\left(\begin{bmatrix}
\omega_{a_2}(z) & \Pi_2(z,\zeta,w)\\
0 & \omega_{b_2}(w)
\end{bmatrix}-X_2\right)^{-1}\right]^{-1}
\end{eqnarray*}
for all $z,w\in\mathbb{C}^+$, $\zeta\in\mathbb{C}$, where $\Pi_1(z,\zeta,w)$ and 
$\Pi_2(z,\zeta,w)$ are the functions introduced in Theorem {\rm \ref{Main}}.
\end{theorem}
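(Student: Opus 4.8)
The plan is to reduce the statement to the operator-valued conditionally-free subordination result of Popa--Vinnikov and Boedihardjo--Popa--Vinnikov, translated through the $M_2(\mathbb C)$-valued construction already set up in Lemma \ref{lem:6.1} and the preceding remark. First I would invoke Lemma \ref{Matr-Bi-Free} together with Lemma \ref{lem:6.1}: since $(a_1,b_1)$ and $(a_2,b_2)$ are conditionally bi-free with respect to $(\varphi,\psi)$, the pairs $(\mathrm L(X_1),\mathrm R(Y_1))$-type faces make $X_1$ and $X_2$ behave, on upper triangular matrices, exactly like a pair of $M_2(\mathbb C)$-valued random variables that are conditionally free over $(M_2(\mathbb C),M_2(\mathbb C))$ with respect to $(M_2(\varphi),M_2(\psi))$. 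Concretely, pick $Y_1,Y_2$ conditionally free over $(M_2(\mathbb C),M_2(\mathbb C))$ with respect to $(M_2(\varphi),M_2(\psi))$ and having the same two-state $M_2(\mathbb C)$-valued distribution as $X_1,X_2$ respectively; Lemma \ref{lem:6.1} and Equation \eqref{upperc} guarantee that the restrictions to upper triangular matrices of all the relevant $G$, $\mathcal G$, $R^{\mathrm c}$ and $K$ transforms of $X_j, X_1+X_2$ agree with those of $Y_j, Y_1+Y_2$.

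Next I would apply the operator-valued conditionally free subordination identity from \cite[Lemma 2.14]{BPV}, namely $h_{Y_1+Y_2}(b)=h_{Y_1}(\omega_{Y_1}(b))+h_{Y_2}(\omega_{Y_2}(b))$, where $h_Y(b)=\mathcal G_Y(b)^{-1}-b$ and $\omega_{Y_j}$ is the $B$-valued subordination function for ordinary freeness over $(M_2(\mathbb C),M_2(\mathbb C))$ (which only depends on the $M_2(\psi)$-distributions). Evaluating this at $b=\begin{bmatrix}z & \zeta\\0 & w\end{bmatrix}$ with $z,w\in\mathbb C^+$ and unwinding the definition of $h$, one gets precisely
\[
\mathcal G_{X_1+X_2}\!\left(\begin{bmatrix}z & \zeta\\0 & w\end{bmatrix}\right)^{-1}+\begin{bmatrix}z & \zeta\\0 & w\end{bmatrix}=(\mathcal G_{X_1}\circ\omega_{Y_1})\!\left(\begin{bmatrix}z & \zeta\\0 & w\end{bmatrix}\right)^{-1}+(\mathcal G_{X_2}\circ\omega_{Y_2})\!\left(\begin{bmatrix}z & \zeta\\0 & w\end{bmatrix}\right)^{-1},
\]
which is Equation \eqref{eqn:GEquation}. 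Here $\mathcal G_{X_j}(v)=M_2(\varphi)[(v-X_j)^{-1}]$, so the right-hand side already has exactly the form of the two summands in the claimed identity, provided I identify $\omega_{Y_j}\!\left(\begin{bmatrix}z & \zeta\\0 & w\end{bmatrix}\right)$ with $\begin{bmatrix}\omega_{a_j}(z) & \Pi_j(z,\zeta,w)\\0 & \omega_{b_j}(w)\end{bmatrix}$. That identification is supplied by Remark \ref{rem2} and the computations following it: the subordination function for $X_1,X_2$ (equivalently $Y_1,Y_2$) preserves upper triangular matrices by \cite[Theorem 2.7]{BMS}, its $(1,1)$ and $(2,2)$ entries must be the scalar $\boxplus$-subordination functions $\omega_{a_j},\omega_{b_j}$ (associated to $\mu_j$ and $\nu_j$, i.e. the $\psi$-distributions), and its $(1,2)$ entry is by definition $\Pi_j$ from Theorem \ref{Main}. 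It remains only to note that $\mathcal G_{X_1+X_2}(v)=M_2(\varphi)[(v-X_1-X_2)^{-1}]$ and $\mathcal G_{X_j}(\omega_{Y_j}(v))=M_2(\varphi)[(\omega_{Y_j}(v)-X_j)^{-1}]$, which turns \eqref{eqn:GEquation} verbatim into the asserted equation.

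The only genuine subtlety — and the step I expect to require the most care — is the domain and continuation argument: the identities from \cite{BPV} and the identification of $\omega_{Y_j}$ are first established for $v$ with $\|v^{-1}\|$ small (or, for the $h$-function identity, on the Siegel upper half-plane of $M_2(\mathbb C)$), and one must argue that the upper triangular matrices $\begin{bmatrix}z & \zeta\\0 & w\end{bmatrix}$ with $z,w\in\mathbb C^+$, $\zeta\in\mathbb C$ all lie in a set on which everything extends analytically. This is handled exactly as in the bi-free case treated after Remark \ref{rem2}: such a matrix has positive imaginary part once $4\,\Im z\,\Im w>|\zeta|^2$, the $\omega_{Y_j}$ and $\mathcal G$ transforms are analytic there and preserve upper triangularity, and the resulting scalar identity \eqref{eqn:theta2}—already known to extend to all of $\mathbb C^+\times\mathbb C^+$ as an equality of meromorphic functions—lets us remove the constraint on $|\zeta|$ by analytic continuation in $\zeta$ for fixed $z,w$. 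Thus the statement follows, with the proof reducing to collecting the pieces: Lemma \ref{lem:6.1}, Remark \ref{rem2} (its conditionally free analogue), \eqref{eqn:GEquation}, and the description of $\omega_{Y_j}$ via $\omega_{a_j},\omega_{b_j}$ and $\Pi_j$.
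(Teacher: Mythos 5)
Your proposal is correct and follows essentially the same route as the paper: the theorem is obtained by combining Lemma \ref{lem:6.1} with the operator-valued conditionally free $h$-function subordination of \cite[Lemma 2.14]{BPV} applied to the $M_2(\mathbb C)$-valued models $Y_1,Y_2$, then identifying $\omega_{Y_j}$ on upper triangular matrices via $\omega_{a_j},\omega_{b_j}$ and $\Pi_j$ exactly as in Remark \ref{rem2} and its conditionally free analogue (Equation \eqref{eqn:GEquation}). The only slip is bibliographic: \cite{BPV} is Belinschi--Popa--Vinnikov, not Boedihardjo--Popa--Vinnikov; otherwise your extra care with the domain/continuation step is a welcome elaboration of what the paper leaves implicit.
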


\begin{remark}
We conclude this section with a remark. In noncommutative probability theory, there 
is another notion of independence, called monotonic independence, introduced by Muraki which, 
together with tensor, free, Boolean, and anti-monotonic independences, form the only five 
notions of natural independence. In the operator-valued setting, if $X_1=X_1^*$ and $X_2=X_2^*$ 
in $(M,E,B)$ are monotonically independent with amalgamation over $B$, then 
$G_{X_1+X_2}(b)=G_{X_1}(G_{X_2}(b)^{-1})$. Now, if $(a_1,b_1)$ and $(a_2, b_2)$ 
are self-adjoint two-faced pairs in $(\mathcal A, \varphi)$ and if we define $X_j = \begin{bmatrix}
a_j & 0\\
0 & b_j
\end{bmatrix}$ for $j = 1, 2$ as before, then by computing
\[G_{X_1 + X_2}\left(\begin{bmatrix}
z & \zeta\\
0 & w
\end{bmatrix}\right) \text{ and } G_{X_1}\left(G_{X_2}\left(\begin{bmatrix}
z & \zeta\\
0 & w
\end{bmatrix}\right)^{-1}\right),\]
and comparing the $(1,2)$ entries, we obtain an expression for $G_{(a_1+a_2, b_1+b_2)}(z,w)$ in 
terms of the Cauchy transforms of the two pairs and the marginals. This has been shown in
\cite{GHS} to lead to one of the two natural notions of bi-monotonic independence, generalizing 
monotonic independence to the two-faced setting. 
\end{remark}

%%%%%%%%%%%%%%%%%%%%%%%%%%%%%%%%%%%%%%%%%%%%%%%%%%%%%%%
%%%%%%%%%%%%%%%%%%%%%%%%%%%%%%%%%%%%%%%%%%%%%%%%%%%%%%%
%%%%%%%%%%%%%%%%%%%%%%%%%%%%%%%%%%%%%%%%%%%%%%%%%%%%%%%

\section{No conditional expectations of the resolvent}\label{neg}

The analytic subordination result of Biane is stronger than 
the result stated in \eqref{subord}: it is shown in \cite{Biane1}
that if $a_1,a_2$ are free self-adjoint random variables in the
{\em tracial} $C^*$-noncommutative probability space 
$(\mathcal A,\varphi)$, then
$$
E_{\mathbb C[a_j]}\left[(z-a_1-a_2)^{-1}\right]=(\omega_{a_j}(z)-a_j
)^{-1},\quad z\in\mathbb C^+,
$$
where $E_{\mathbb C[a_j]}$ denotes the unique trace-preserving 
conditional expectation from the von Neumann algebra 
generated by $a_1$ and $a_2$ onto the von Neumann
algebra generated by $a_j$. Voiculescu generalized this
result to self-adjoint random variables which are free
with amalgamation with respect to a trace-preserving
conditional expectation. However, in order to prove
formula \eqref{subord} alone, both in its scalar- and 
operator-valued version, only analytic function
theory methods are needed, as shown in 
\cite{BBSubord,BMS}. It is remarkable that one can
use this same analytic functions machinery to 
prove Biane's result, at the cost of an amplification
to $3\times 3$ matrices. This method seemed
particularly well suited to prove a similar result for the 
expectation of the product of the resolvents of sums of bi-free, 
bi-partite variables. Unfortunately, that turns out not to be the case. 
In order to explain why Biane's result cannot be fully generalized to 
bi-free variables, we shall give an outline 
of this procedure below.

Recall that if $(\mathcal A,\varphi)$ is a tracial
$W^*$-noncommutative probability space and $B\subseteq
\mathcal A$ is a von Neumann subalgebra, then there
exists a unique trace-preserving conditional expectation
$E\colon\mathcal A\to B$. This expectation is defined
via the following relation: for any $x\in\mathcal A$, 
$E[x]$ is the unique element in $B$ so that $\varphi(x\xi^*)
=\varphi(E[x]\xi^*)$ for all $\xi\in B$. Clearly, given the
hypothesis of weak${}^*$-continuity and faithfulness on 
$\varphi$, it is enough to verify the equality $\varphi(x\xi^*)
=\varphi(E[x]\xi^*)$ for all elements $\xi$ in a subset of $B$
whose linear span is dense in $B$. Thus, in order to 
prove the relation $E_{\mathbb C[a_j]}
\left[(z-a_1-a_2)^{-1}\right]=(\omega_{a_j}(z)-a_j)^{-1},$
it suffices to show that for any $v\in\mathbb C[a_j],v>0$,
we have 
$$
\varphi\left((z-a_1-a_2)^{-1}v\right)=
\varphi\left((\omega_{a_j}(z)-a_j)^{-1}v\right).
$$
In order to do this, we use a linearization trick similar
to the one used in \cite{BMS} (which originates in Anderson's paper \cite{A})
and Lemma \ref{Matr-Bi-Free}, with the right variable 
equal to zero.
Consider $A_1=\begin{bmatrix}
0 & 0 & 0\\
0 & 0 & -v\\
0 & -v & a_1
\end{bmatrix}\in M_3(\mathbb C[a_1]), A_2=\begin{bmatrix}
0 & 0 & 0\\
0 & 0 & 0\\
0 & 0 & a_2
\end{bmatrix}\in M_3(\mathbb C[a_2])$ (recall that $v$ is 
an arbitrary positive element in the von Neumann algebra 
generated by $a_1$). Lemma \ref{Matr-Bi-Free} implies 
that $A_1$ and $A_2$ are free with amalgamation over $M_3(\mathbb C)$
with respect to $M_3(\varphi):=\varphi\otimes{\rm Id}_{M_3(\mathbb C)}$.
According to  \cite[Theorem 2.7]{BMS}, relation \eqref{subord}
holds for the $M_3(\mathbb C)$-valued Cauchy transforms of
$A_1,A_2$ and $A_1+A_2$. We have
\begin{eqnarray}
\lefteqn{G_{A_1+A_2}\left(\begin{bmatrix}
0 &1 & 0\\
1 & 0 & 0\\
0 & 0 & z
\end{bmatrix}\right)=
M_3(\varphi)\left(\left(
\begin{bmatrix}
0 &1 & 0\\
1 & 0 & 0\\
0 & 0 & z
\end{bmatrix}-A_1-A_2\right)^{-1}\right)}\nonumber\quad\quad\quad\quad\quad\quad\quad\\
& =  &
\begin{bmatrix}
\varphi(v(z-a_1-a_2)^{-1}v) & 1 & -\varphi(v(z-a_1-a_2)^{-1})\\
1 & 0 & 0 \\
-\varphi((z-a_1-a_2)^{-1}v) & 0 & \varphi((z-a_1-a_2)^{-1})
\end{bmatrix}.\label{inverse}
\end{eqnarray}
For simplicity, we denote $R=(z-a_1-a_2)^{-1}$, so that $G_{a_1+a_2}(z)=\varphi(R)$.
Then
$$
G_{A_1+A_2}\left(\begin{bmatrix}
0 &1 & 0\\
1 & 0 & 0\\
0 & 0 & z
\end{bmatrix}\right)^{-1}=\begin{bmatrix}
0 & 1 & 0\\
1 & \frac{\varphi(vR)\varphi(Rv)}{\varphi(R)}-\varphi(vRv) & \frac{\varphi(vR)}{\varphi(R)} \\
0 & \frac{\varphi(Rv)}{\varphi(R)} & \frac{1}{\varphi(R)}
\end{bmatrix}.
$$
Theorem 2.7 of \cite{BMS} guarantees (through purely 
function-theoretic arguments) the existence of subordination functions 
$\omega_{A_1}$ and $\omega_{A_2}$ satisfying \eqref{subord}.
This relation implies via a few arithmetic manipulations and a few applications 
of the identity principle for analytic functions, that there are functions
$\theta_2,\theta_3,\tau_2,\tau_3$ depending analytically on $z$ such that
$$
\omega_{A_1}\left(\begin{bmatrix}
0 &1 & 0\\
1 & 0 & 0\\
0 & 0 & z
\end{bmatrix}\right)=\begin{bmatrix}
0 & 1 & 0\\
1 & \theta_2(z) & \theta_3(z) \\
0 & \theta_3(z) & \omega_{a_1}(z)
\end{bmatrix},
$$
$$\omega_{A_2}\left(\begin{bmatrix}
0 &1 & 0\\
1 & 0 & 0\\
0 & 0 & z
\end{bmatrix}\right)=\begin{bmatrix}
0 & 1 & 0\\
1 & \tau_2(z) & \tau_3(z) \\
0 & \tau_3(z) & \omega_{a_2}(z)
\end{bmatrix},
$$
(the functions $\omega_{a_j}$ are the subordination functions 
from formula \eqref{subord} associated to $a_j,j=1,2$).
The subordination relations $G_{A_1+A_2}=G_{A_2}\circ\omega_{A_2}$ and $G_{a_1+a_2}=G_{a_2}\circ\omega_{a_2}$
translate into
$$
\begin{bmatrix}
\varphi(vRv) & 1 & -\varphi(vR)\\
1 & 0 & 0 \\
-\varphi(Rv) & 0 & {\varphi(R)}
\end{bmatrix}=\begin{bmatrix}
\tau_3(z)^2\varphi(R)-\tau_2(z) & 1 & -\tau_3(z)\varphi(R)\\
1 & 0 & 0 \\
-\tau_3(z)\varphi(R) & 0 & \varphi(R)
\end{bmatrix}
$$
Thus, $\tau_3(z)=\frac{\varphi(vR)}{\varphi(R)}$. Together with relation \eqref{subord} applied to the matrix-valued functions, this provides us with the equality $\theta_3(z)=0$
(and, as an added bonus, $\tau_2(z)=\frac{\varphi(vR)^2}{\varphi(R)}-\varphi(vRv)$).
The subordination relations corresponding to $A_1$ and $a_1$ yield
$$
\begin{bmatrix}
\varphi(vRv) & 1 & -\varphi(vR)\\
1 & 0 & 0 \\
-\varphi(Rv) & 0 & {\varphi(R)}
\end{bmatrix}=\begin{bmatrix}
\varphi\left(\frac{v^2}{\omega_{a_1}(z)-a_1}\right)-\theta_2(z) & 1 & -\varphi\left(\frac{v}{\omega_{a_1}(z)-a_1}\right)\\
1 & 0 & 0 \\
-\varphi\left(\frac{v}{\omega_{a_1}(z)-a_1}\right) & 0 & \varphi\left(\frac{1}{\omega_{a_1}(z)-a_1}\right)
\end{bmatrix},
$$
The equality of $(1,3)$ entries completes the proof of Biane's result. As an added bonus, traciality of $\varphi$ allows us to conclude also that $\theta_2(z)=0$, which
determines $\omega_{A_1},\omega_{A_2}$ on our variable.

Based on Lemma \ref{lem1} and Remark \ref{rem2}, it is 
tempting to use the same trick in order to find 
$E_{\mathbb C[a_j,b_j]}\left[(z-a_1-a_2)^{-1}(w-b_1-b_2)^{-1}\right]$
for $(a_1,b_1)$ and $(a_2,b_2)$ in $\mathcal A^2$ bi-free with
respect to $\varphi$ and bi-partite (that is, $a_jb_j=b_ja_j$, $j=1,2$).
Consider the $6\times6$ matrix
$$
V=\begin{bmatrix}
0 & 1 & 0 & 0 & 0 & 0\\
1 & 0 & v & 0 & 0 & 0\\
0 & v & z-a_1-a_2& 0 & 0 & 1\\
0 & 0 & 0 & 0 & 1 & 0\\
0 & 0 & 0 & 1 & 0 & u\\
0 & 0 & 0 & 0 & u & w-b_1-b_2
\end{bmatrix},
$$
where $v\in\mathbb C[a_1],u\in\mathbb C[b_1]$ are both strictly 
positive. Lemma \ref{Matr-Bi-Free} guarantees that 
$$
(A_1,B_1)=\left(\begin{bmatrix}
0 & 0 & 0 \\
0 & 0 & -v \\
0 & -v & a_1
\end{bmatrix},\begin{bmatrix}
0 & 0 & 0\\
0 & 0 & -u\\
0 & -u & b_1
\end{bmatrix}\right)$$ and 
$$(A_2,B_2)=\left(\begin{bmatrix}
0 & 0 & 0 \\
0 & 0 & 0\\
0 & 0 & a_2
\end{bmatrix},\begin{bmatrix}
0 & 0 & 0\\
0 & 0 & 0\\
0 & 0 & b_2
\end{bmatrix}\right)
$$ are bi-free with amalgamation over $M_3(\mathbb C)$. For simplicity, 
let $Z=\begin{bmatrix}
0 &1 & 0\\
1 & 0 & 0\\
0 & 0 & z
\end{bmatrix}$, $W=\begin{bmatrix}
0 &1 & 0\\
1 & 0 & 0\\
0 & 0 & w
\end{bmatrix},$ and $e_{3,3}=\begin{bmatrix}
0 &0 & 0\\
0 & 0 & 0\\
0 & 0 & 1
\end{bmatrix}$
Proposition \ref{Main} and Remark \ref{rem2} apply to $X_j=\begin{bmatrix}
A_j & 0\\
0 & B_j
\end{bmatrix}$, $j=1,2$, and the scalar matrix $\begin{bmatrix}
Z & e_{3,3}\\
0 & W
\end{bmatrix}$.
On the other hand, 
inverting the matrix $V$, we obtain on the two $3\times 3$ diagonal
blocks precisely the formula from \eqref{inverse} and its analogue
for $b_1,b_2,w$. In the upper right $3\times3$ corner, we obtain the
matrix 
$$
\begin{bmatrix}
v(z-a_1-a_2)^{-1}(w-b_1-b_2)^{-1}u & 0 & -v(z-a_1-a_2)^{-1}(w-b_1-b_2)^{-1}\\
0 & 0 & 0\\
(z-a_1-a_2)^{-1}(w-b_1-b_2)^{-1}u & 0 & (z-a_1-a_2)^{-1}(w-b_1-b_2)^{-1}
\end{bmatrix}
.$$
If $\varphi$ were tracial, applying $\varphi$ on the above and comparing with
the corresponding matrix entry from $G_{X_1}\left(\omega_{X_1}\left(\begin{bmatrix}
Z & e_{3,3}\\
0 & W
\end{bmatrix}\right)\right)$ would provide the bi-free analogue of Biane's 
result. However, it turns out that $\varphi$ is tracial only in the relatively trivial
case in which the two faces are independent. We emphasize that a formula 
giving $E_{\mathbb C[a_j,b_j]}\left[(z-a_1-a_2)^{-1}(w-b_1-b_2)^{-1}\right]$ as a product
of resolvents of $a_j$ and $b_j$ would imply that
$\varphi(v(z-a_1-a_2)^{-1}(w-b_1-b_2)^{-1}u)=\varphi(uv(z-a_1-a_2)^{-1}(w-b_1-b_2)^{-1})$
for $v\in\mathbb C[a_j],u\in\mathbb C[b_j]$.

\begin{theorem}\label{trace}
Let $(a_1, b_1)$ and $(a_2, b_2)$ be pairs of self-adjoint operators that are bi-free in a $^*$-noncommutative probability space $(\mathcal A, \varphi)$.  Suppose that $\tau := \varphi|_{\alg(a_1, a_2, b_1, b_2)}$ is tracial and that for each $k \in \{1,2\}$ there does not exists $\alpha_k, \beta_k \in \mathbb R$ such that $(\varphi(a_k^n), \varphi(b_k^n)) = (\alpha_k^n, \beta_k^n)$ for all $n \in \mathbb N$ (i.e. neither pair is scalars in distribution).  Then $\alg(a_1, a_2)$ and $\alg(b_1, b_2)$ are independent.  In particular, $\varphi$ decomposes as the tensor product of tracial states on $\alg(a_1, a_2)$ and $\alg(b_1, b_2)$.
\end{theorem}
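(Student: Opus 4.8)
The plan is to recast the conclusion as the vanishing of a family of bi-free cumulants and then to annihilate those cumulants one at a time, by ``sandwiching'' a word built from one pair between two copies of an element of the other pair and exploiting cyclicity of $\tau$.

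\emph{Reduction.} Because $(a_1,b_1)$ and $(a_2,b_2)$ are bi-free, every bi-free cumulant of the tuple $(a_1,a_2,b_1,b_2)$ involving both indices vanishes, so the only cumulants that can be nonzero are cumulants $\kappa_n[z_1,\dots,z_n]$ with all $z_i\in\{a_k,b_k\}$ for a single $k$. I claim it is enough to prove that, for each $k$, such a cumulant vanishes whenever both letters $a_k$ and $b_k$ actually occur among the arguments --- call these the \emph{mixed} cumulants. Granting this, in the moment--cumulant expansion of $\varphi(w)$ for a word $w$ in the four generators only the bi-noncrossing partitions all of whose blocks are monochromatic within a single index survive; since left letters precede right letters (the latter read in reverse) in the bi-noncrossing order, such a partition decomposes as a noncrossing partition of the left letters together with one of the right letters, whence $\varphi(w)=\varphi(w_\ell)\,\varphi(w_r)$, where $w_\ell,w_r$ are the left and right subwords and $\varphi$ is evaluated on $\alg(a_1,a_2)$ and $\alg(b_1,b_2)$ (recall $a_1,a_2$ are free, and likewise $b_1,b_2$). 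Thus $\varphi(w)$ depends only on $w_\ell$ and $w_r$, so $\varphi\big([a_i,b_j]^{*}W\big)=0$ for every word $W$ in the generators and all $i,j$; taking $W=[a_i,b_j]$ and using faithfulness gives $[a_i,b_j]=0$. Hence $\alg(a_1,a_2)$ and $\alg(b_1,b_2)$ commute, $\varphi(uv)=\varphi(u)\varphi(v)$ for $u\in\alg(a_1,a_2)$ and $v\in\alg(b_1,b_2)$, and the two restrictions are tracial since $\tau$ is.

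\emph{Key lemma.} If $\{k,k'\}=\{1,2\}$ and $(a_{k'},b_{k'})$ is not scalar in distribution, then every mixed cumulant of a word in $\{a_k,b_k\}$ vanishes. By faithfulness and positivity, non-scalarness produces $c\in\{a_{k'},b_{k'}\}$ with $\varphi(c^{2})-\varphi(c)^{2}>0$. I would induct on $n$. Assuming all shorter mixed pair-$k$ cumulants vanish, the moment--cumulant formula collapses, for a length-$n$ word $Z=z_1\cdots z_n$ in $\{a_k,b_k\}$, to $\varphi(Z)=\varphi(Z_\ell)\varphi(Z_r)+\kappa_n[Z]$, where $Z_\ell,Z_r$ are the $a_k$- and $b_k$-subwords and $\kappa_n[Z]:=\kappa_n[z_1,\dots,z_n]$. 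Now expand $\varphi(cZc)$ and $\varphi(c^{2}Z)$. Since $c$ has index $k'\neq k$, its two occurrences can lie in a common block only with each other. When they are singletons the pair-$k$ positions contribute $\varphi(Z)$; when they are joined, their block is forced, by the bi-noncrossing order, to enclose exactly the positions of $Z$ carrying $c$'s colour and to exclude the rest, so the pair-$k$ partition separates and contributes $\varphi(Z_\ell)\varphi(Z_r)$ with no $\kappa_n[Z]$ term. Collecting, $\varphi(cZc)=\varphi(c^{2})\,\varphi(Z_\ell)\varphi(Z_r)+\varphi(c)^{2}\,\kappa_n[Z]$ and $\varphi(c^{2}Z)=\varphi(c^{2})\big(\varphi(Z_\ell)\varphi(Z_r)+\kappa_n[Z]\big)$; but $cZc$ and $c^{2}Z$ are cyclic permutations of one another, so traciality forces $\big(\varphi(c^{2})-\varphi(c)^{2}\big)\kappa_n[Z]=0$, hence $\kappa_n[Z]=0$.

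\emph{Conclusion.} Since \emph{both} pairs are non-scalar, the lemma applies with $(k,k')=(2,1)$ and with $(k,k')=(1,2)$, so every mixed cumulant of each pair vanishes and the Reduction finishes the argument. I expect the delicate point to be the combinatorial bookkeeping inside the lemma: after discarding every bi-noncrossing partition that has a cross-index block (by bi-freeness) or a shorter mixed pair-$k$ block (by induction), one must check that the surviving partitions of $\varphi(cZc)$ and of $\varphi(c^{2}Z)$ reassemble into exactly the two displayed expressions, the ``joined vs.\ singleton'' alternative for the $c$-block being precisely what governs whether the top cumulant appears. (Already the case $n=2$ re-proves $\varphi(a_kb_k)=\varphi(a_k)\varphi(b_k)$, bypassing the computation behind \eqref{bi}.)
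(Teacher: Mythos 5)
Your proof is correct and follows essentially the same route as the paper's: reduce to the vanishing of the mixed left--right bi-free cumulants within each pair, then kill the top cumulant by induction on length, comparing (via traciality and the bi-free moment--cumulant formula) a moment of the other pair sandwiched around the word against the corresponding unsandwiched moment. The only, harmless, variation is that you sandwich with a single element $c$ of positive variance, extracted from the non-scalarness hypothesis using positivity/faithfulness, whereas the paper sandwiches with powers $a_{k'}^{n_2}(\cdot)a_{k'}^{n_1}$ for exponents chosen so that $\varphi(a_{k'}^{n_1+n_2})\neq\varphi(a_{k'}^{n_1})\varphi(a_{k'}^{n_2})$.
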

\begin{proof}
By the combinatorial theory of bi-free independence (see \cite{CNS}) it suffices to prove the following:  for all $n \in \mathbb N$, for all non-constant $\chi : \{1,\ldots, n\} \to \{\ell, r\}$, and for all $k\in \{1, 2\}$ we have
\[
\kappa_\chi(c_1, \ldots, c_n) = 0
\]
where $c_m = a_k$ if $\chi(m) = \ell$ and $c_m = b_k$ if $\chi(m) = r$.  We will only verify the above when $k = 1$ as the case $k = 2$ follows by symmetry.  We proceed by induction on $n$.  

As there does not exists $\alpha, \beta \in \mathbb R$ such that $(\varphi(a_2^n), \varphi(b_2^n)) = (\alpha^n, \beta^n)$ for all $n \in \mathbb N$, there exists $n_1, n_2 \in \mathbb N$ such that $\varphi(a_2^{n_1+n_2}) \neq \varphi(a_2^{n_1}) \varphi(a_2^{n_2})$ or $\varphi(b_2^{n_1+n_2}) \neq \varphi(b_2^{n_1}) \varphi(b_2^{n_2})$.  We will assume that $\varphi(a_2^{n_1+n_2}) \neq \varphi(a_2^{n_1}) \varphi(a_2^{n_2})$ as the other case will follow by similar arguments.

The case $n = 1$ is trivial so we begin with the case $n = 2$.  Here $(\chi(1), \chi(2)) \in \{(\ell, r), (r, \ell)\}$.   By bi-freeness and traciality, we know that
\begin{align*}
\varphi(a_2^{n_1+n_2}) \varphi(a_1 b_1) &= \varphi(a_2^{n_1+n_2} a_1 b_1) \\
&= \varphi(a_2^{n_2} a_1 b_1a_2^{n_1}) \\
&= \varphi(a_2^{n_1+n_2}) \varphi(a_1) \varphi(b_1) + \varphi(a_2^{n_1}) \varphi(a_2^{n_2}) \kappa_{(\ell, r)}(a_1, b_1).
\end{align*}
Thus, as
\[
\kappa_{(\ell, r)}(a_1, b_1) = \varphi(a_1 b_1)  - \varphi(a_1) \varphi(b_1) 
\]
we obtain that
\[
\varphi(a_2^{n_1+n_2})\kappa_{(\ell, r)}(a_1, b_1) = \varphi(a_2^{n_1}) \varphi(a_2^{n_2}) \kappa_{(\ell, r)}(a_1, b_1).
\]
As $\varphi(a_2^{n_1+n_2}) \neq \varphi(a_2^{n_1}) \varphi(a_2^{n_2})$, this implies $\kappa_{(\ell, r)}(a_1, b_1) = 0$.  Similarly, 
\begin{align*}
\varphi(a_2^{n_1+n_2}) \varphi(b_1 a_1) &= \varphi(a_2^{n_1+n_2} b_1 a_1) \\
&= \varphi(a_2^{n_2} b_1 a_1 a_2^{n_1}) \\
&= \varphi(a_2^{n_1+n_2}) \varphi(a_1) \varphi(b_1) + \varphi(a_2^{n_1}) \varphi(a_2^{n_2}) \kappa_{(r, \ell)}(b_1, a_1).
\end{align*}
Thus the same argument implies $\kappa_{(r, \ell)}(b_1, a_1) = 0$.  

For the inductive step, suppose we have verified the result for $n-1$ for some $n\in\mathbb N$.  Let  $\chi : \{1,\ldots, n\} \to \{\ell, r\}$ be non-constant  and let where $c_m = a_1$ if $\chi(m) = \ell$ and $c_m = b_1$ if $\chi(m) = r$.  Let $m_1 = |\chi^{-1}(\{\ell\})|$ and let $m_2 = |\chi^{-1}(\{r\})|$.   By bi-freeness and traciality, we know that
\begin{align*}
\varphi(a_2^{n_1+n_2}) \varphi(c_1 \cdots c_n) &= \varphi(a_2^{n_1+n_2} c_1 \cdots c_n) \\
&= \varphi(a_2^{n_2} c_1 \cdots c_n a_2^{n_1}) \\
&= \varphi(a_2^{n_1+n_2}) \varphi(a_1^{m_1}) \varphi(b_1^{m_2}) + \varphi(a_2^{n_1}) \varphi(a_2^{n_2})\kappa_\chi(c_1, \ldots, c_n)
\end{align*}
(where we have used the induction hypothesis to deduce any cumulant involving $a_1$ and $b_1$ of length at most $n-1$ is zero).  As 
\[
\varphi(c_1 \cdots c_n) = \varphi(a_1^{m_1}) \varphi(b_1^{m_2}) + \kappa_\chi(c_1, \ldots, c_n)
\]
(where we have used the induction hypothesis to deduce any cumulant involving $a_1$ and $b_1$ of length at most $n-1$ is zero), we obtain that
\[
\varphi(a_2^{n_1+n_2})\kappa_\chi(c_1, \ldots, c_n) =  \varphi(a_2^{n_1}) \varphi(a_2^{n_2})\kappa_\chi(c_1, \ldots, c_n).
\]
As $\varphi(a_2^{n_1+n_2}) \neq \varphi(a_2^{n_1}) \varphi(a_2^{n_2})$, this implies $\kappa_\chi(c_1, \ldots, c_n) = 0$.  Hence the result follows.
\end{proof}

\end{document}